\newtheorem{thm}{Theorem}
\numberwithin{thm}{section}
\newtheorem{lemma}[thm]{Lemma}
\newtheorem{proposition}[thm]{Proposition}
\newtheorem{corollary}[thm]{Corollary}
\newtheorem{question}[thm]{Question}
\theoremstyle{remark}
\newtheorem{rmk}[thm]{Remark}
\theoremstyle{definition}
\newtheorem{definition}[thm]{Definition}
\newtheorem{example}[thm]{Example}
\newcommand{\overl}[1]{\overline{#1}}
\newcommand{\Z}{\mathbb{Z}}
\newcommand{\Q}{\mathbb{Q}}
\newcommand{\T}{\mathbb{T}}
\newcommand{\E}{\mathbb{E}}
\newcommand{\Sa}{\mathbb{S}}
\newcommand{\Aut}{\text{Aut}}
\newcommand{\Out}{\text{Out}}
\newcommand{\Mod}{\text{Mod}}
\newcommand{\Diff}{\text{Diff}}
\newcommand{\id}{\text{id}}
\newcommand{\what}{\widehat}
\newcommand{\til}{\widetilde}
\newcommand{\IA}{\text{IA}}
\newcommand{\Inn}{\text{Inn}}
\newcommand{\Tor}{\mathcal{I}}
\newcommand{\Gl}{\text{Gl}}
\title{Automorphisms and Homology of Non-positively Curved Cube Complexes}
\author{Corey Bregman}
\begin{document}
\maketitle
\begin{abstract}

We define an integer-valued invariant of special cube complexes called the genus, and prove that having genus one characterizes special cube complexes with abelian fundamental group.  Using the genus, we obtain a new proof that the fundamental group of a special cube complex is either free abelian or surjects onto a non-cyclic free group. We also investigate automorphisms of special cube complexes, and give a new geometric proof that the Torelli subgroup for a right-angled Artin group is torsion-free.

\end{abstract}

\section{Introduction}
Non-positively curved (NPC) cube complexes were introduced by Gromov \cite{Gro87} as a large source of easily constructible examples of locally CAT(0) spaces. These spaces are built by identifying Euclidean $n$-cubes $[-1,1]^n$ along their faces by isometries, subject to certain local combinatorial conditions. Recently, NPC cube complexes have come into prominence in geometric group theory and low-dimensional topology through their r\^{o}le in Agol's solution \cite{Ag13} to the virtually Haken and virtually fibered conjecture for hyperbolic three-manifolds. More generally, NPC cube complexes naturally arise when one considers (relatively) hyperbolic groups which can be built up from the trivial group by iterated amalgamation over (relatively) quasi-convex subgroups (\cite{WiseQCH}, \cite{Ag13}, \cite{AGM16}). 

Haglund and Wise \cite{HaWi08} introduced a restricted class of cube complexes called \emph{special} cube complexes.  \emph{Special groups}, or groups which arise as fundamental groups of finite-dimensional special cube complexes, are known to enjoy many nice properties; in particular, when the cube complex is compact, they embed in SL$(n,\Z)$ and are residually torsion-free nilpotent \cite{HaWi08}. The latter implies moreover that such groups are indicable, \emph{i.e.} they surject onto $\Z$. Both of the above stated properties are consequences of the fact that fundamental groups of compact special cube complexes embed into right-angled Artin groups (raags).  Raags are in some sense the prototypical examples of compact special groups, and are characterized by having presentations in which any two generators either commute or generate $F_2$.  

Let $G$ be a special group.  In this paper we will be interested in how the abelianization $H_1(G)$ determines the geometry of a special cube complex $X$ with $\pi_1(X)\cong G$.  It was shown by Wise \cite{WiseQCH} (see also Koberda--Suciu \cite{KoSu16}) that special groups which are not virtually abelian are \emph{large}; they have finite index subgroups which surject onto the non-abelian free group $F_2$.  In particular, the rank of $H_1$ grows at least linearly after passing to finite index subgroups.  Wise further asked (\cite{WiseQCH}, pg. 143) whether any special group is either abelian or surjects onto a non-cyclic free group.  Our main theorem answers this question in the affirmative
\begin{thm}\label{main} Let $G$ be the fundamental group of a finite dimensional special cube complex.  Then either $G$ is abelian or surjects onto $F_2$.
\end{thm}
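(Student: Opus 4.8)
The plan is to deduce Theorem~\ref{main} formally from the characterization of genus-one complexes together with one further structural fact about the genus. Recall that for a nontrivial special cube complex $X$ the genus $\mathrm{g}(X)$ is a positive integer (possibly $+\infty$) which equals $1$ if and only if $\pi_1(X)$ is abelian. Granting this, the theorem reduces to the single implication: if $\mathrm{g}(X)\ge 2$ then $\pi_1(X)\twoheadrightarrow F_2$. Indeed, if $G=\pi_1(X)$ is not abelian then it is nontrivial, so $\mathrm{g}(X)$ is defined, and by the genus characterization $\mathrm{g}(X)\ne 1$, hence $\mathrm{g}(X)\ge 2$; the displayed implication then supplies the required surjection. (If $G$ is trivial, or abelian, the first alternative holds.) So all the content is in proving that implication, and I now describe how I would attack it.

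I would exploit specialness to pass to a right-angled Artin group. Letting $\Gamma$ be the crossing graph of the hyperplanes of $X$, the canonical combinatorial local isometry $X\to S_\Gamma$ to the Salvetti complex induces an embedding $\iota\colon G\hookrightarrow A_\Gamma$ under which each edge-loop of $X$ dual to a hyperplane maps to a conjugate of a power of the corresponding standard generator. For any two non-adjacent vertices $u,v$ of $\Gamma$ there is a retraction $\rho_{u,v}\colon A_\Gamma\twoheadrightarrow A_{\{u,v\}}\cong F_2$ killing every other standard generator, well defined precisely because $\{u,v\}$ spans an induced subgraph. The implication then amounts to choosing the pair $\{u,v\}$ so that the composite $\rho_{u,v}\circ\iota\colon G\to F_2$ is onto; geometrically this composite is realized by retracting $X$ onto a wedge of the two edge-loops dual to $u$ and $v$, and such a retraction exists because $X$ is special, via Haglund--Wise's canonical completion and retraction. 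So with the right pair chosen the map is automatically defined, and only surjectivity remains to be forced.

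Surjectivity of $\rho_{u,v}\circ\iota$ is the crux, and the point at which one must use the genus rather than the weaker fact that non-abelian special groups are large. Largeness together with separability of hyperplane subgroups only yields a \emph{finite-index} subgroup of $G$ mapping onto $F_2$, never $G$ itself; and for a carelessly chosen non-adjacent pair the composite can land in a proper, even infinite-index, subgroup of $F_2$ — for instance when the dual loops of $u$ and $v$ are homologically entangled inside $G$. I expect the hypothesis $\mathrm{g}(X)\ge 2$ to unwind to exactly the statement that one can find two hyperplanes of $X$, non-adjacent in $\Gamma$, whose dual loops are \emph{unavoidable} for $G$ — every element of $G$, written over the standard generators of $A_\Gamma$, essentially carries $u^{\pm1}$ and $v^{\pm1}$ — so that $\rho_{u,v}(\iota(G))$ contains generators of $F_2$ and, $F_2$ being two-generated, equals $F_2$. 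Identifying the combinatorial feature of the genus that produces such a pair, and checking that no weaker choice suffices, is where the structural results developed around the genus do the work; the rest is the standard special-cube-complex toolkit, and it should apply uniformly in the non-compact finite-dimensional case as well.
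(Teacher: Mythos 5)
Your reduction is structurally the same as the paper's: split into the ``hard'' equivalence $g(X)=1\Leftrightarrow\pi_1(X)$ abelian, which you take as given, and the ``easy'' implication $g(X)\ge 2\Rightarrow\pi_1(X)\twoheadrightarrow F_2$. But the way you handle that easy implication contains both an unnecessary detour and a genuine confusion. The paper dispatches it in one sentence (``Clearly if $g(G)=n$ then $G$ surjects onto $F_n$'') because the conclusion is immediate from the definition: if $K_1$ and $K_2$ are disjoint non-separating hyperplanes whose union does not disconnect $X$, you collapse $X\setminus(N_1(K_1)\cup N_1(K_2))$ to a point and each product neighborhood $K_i\times[-1,1]$ to a circle, obtaining a map $X\to S^1\vee S^1$. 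Surjectivity on $\pi_1$ comes from the loops dual to $K_1$ and $K_2$ guaranteed by non-separation. No embedding into a raag, no retraction onto $A_{\{u,v\}}$, no canonical completion and retraction is needed. Routing through the crossing graph and $\rho_{u,v}\circ\iota$ would also work (a loop meeting $K_1$ once and $K_2$ zero times maps to a conjugate of $u^{\pm1}$, killed to $u^{\pm1}$ by $\rho_{u,v}$, and symmetrically for $v$) but it buys nothing and imports machinery the paper is trying to avoid.

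More seriously, the mechanism you ascribe to the genus hypothesis is backwards. You write that $g(X)\ge 2$ should ``unwind to\ldots\ every element of $G$, written over the standard generators of $A_\Gamma$, essentially carries $u^{\pm1}$ and $v^{\pm1}$.'' That is not what $g(X)\ge 2$ says, and if it were true it would actually obstruct surjectivity: if every element of the image in $F_2=\langle u,v\rangle$ used both letters, the image could not contain $u$ or $v$. What non-separation of $K_1\cup K_2$ gives you is precisely the opposite: a loop meeting $K_1$ exactly once and missing $K_2$, and another meeting $K_2$ exactly once and missing $K_1$. Those two loops hit $u$ (not $v$) and $v$ (not $u$) respectively, which is what makes the composite onto $F_2$. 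Finally, you remark that the argument ``should apply uniformly in the non-compact finite-dimensional case,'' but the paper's Theorem~\ref{genus1} is stated and proved only for compact $X$; Corollary~\ref{Large} contains a separate argument for the non-compact case (splitting on whether there are finitely or infinitely many non-separating hyperplanes, and in the infinite case using finite-dimensionality to produce two disjoint ones missed by a given loop). That step cannot simply be waved away.
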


Theorem \ref{main} was originally proved by Antol\'{i}n and Minasyan \cite{AM15} who, using different methods, showed that any subgroup of a (finitely or infinitely generated) right-angled Artin group is either abelian or surjects onto $F_2$. In our proof, we introduce an invariant of special cube complexes which we call the \emph{genus}.  This definition has a classical motivation, namely the original combinatorial genus of a surface due to Betti and Poincar\'{e} \cite{PoAS}:  The genus of a closed surface $\Sigma$ is the maximal number of disjoint non-separating simple closed curves whose union does not disconnect $\Sigma$.  Analogously, if $X$ is special then $g(X)$ is defined to be the number of pairwise disjoint, non-separating hyperplanes whose union does not disconnect $X$.  We extend this definition to special groups by defining $g(G)$ to be the maximum genus over all $X$ with $\pi_1(X)=G$ (cf. \S 3).  
Clearly if $g(G)=n$ then $G=\pi_1(X)$ surjects onto $F_n$. The geometric analogue of Theorem \ref{main} characterizes low values of the genus explicitly:
\begin{thm} \label{main1}Let $X$ be special and finite dimensional.  Then
\begin{enumerate}
\item $g(X)=0$ if and only if $X$ is CAT(0).
\item $g(X)=1$ if and only if $\pi_1(X)$ is abelian. 
\item If $\Sigma_g$ denotes the closed orientable surface of genus $g$, then $g(\pi_1(\Sigma_g))=g$. 
\end{enumerate}
\end{thm}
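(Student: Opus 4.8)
The plan rests on two dictionary entries between the geometry of hyperplanes and the algebra of $\pi_1$. First, if $X$ contains $k$ pairwise disjoint hyperplanes $H_1,\dots,H_k$ whose union has connected complement, then cutting $X$ open along them presents $\pi_1(X)$ as the fundamental group of a graph of groups with a single vertex (group $\pi_1(X\setminus\bigcup N(H_i))$) and $k$ loop-edges (groups $\pi_1(H_i)$); killing the vertex group and sending the $i$-th stable letter to a free generator yields $\pi_1(X)\twoheadrightarrow F_k$, so $g(X)=k$ forces $\pi_1(X)\twoheadrightarrow F_k$. Second, a single hyperplane $H$ is non-separating exactly when its dual $\Z/2$-cocycle is nonzero in $H^1(X;\Z/2)$, equivalently when the splitting of $\pi_1(X)$ over $\pi_1(H)$ is an HNN extension rather than an amalgam (or the trivial splitting). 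Throughout I use that a finite dimensional special cube complex admits a local isometry to a Salvetti complex, so $\pi_1(X)$ embeds in a right-angled Artin group and is residually torsion-free nilpotent; in particular a nontrivial such group surjects onto $\Z$, and an abelian one is free abelian of finite rank.

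\emph{Part (1).} The direction ``$X$ CAT(0) $\Rightarrow g(X)=0$'' is immediate, since in a CAT(0) cube complex every hyperplane separates (Sageev). For the converse I argue the contrapositive: if $\pi_1(X)\neq 1$, then $X$ has a non-separating hyperplane. Pick $1\neq g\in\pi_1(X)$; it acts as a hyperbolic isometry of $\til X$ with a combinatorial axis $\ell$, and any hyperplane $\til H$ crossing $\ell$ has the feature that its $\langle g\rangle$-translates are pairwise disjoint (no self-crossing, using speciality) and are shifted along a line by $g$, so the tree dual to the $\pi_1(X)$-orbit of $\til H$ carries an edge-line translated by $g$. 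When $\pi_1(X)$ is cyclic this already forces the splitting along the image $H$ to be HNN, so $H$ is non-separating. For general $\pi_1(X)$ one first produces a surjection to $\Z$ (residual nilpotence) and then detects it by a hyperplane; this is the step I expect to require care, and I would handle it by combining convexity of $\til X$ in the universal cover of the Salvetti complex with the observation that, summed over hyperplanes of a fixed type, the mod-$2$ crossing count of a loop computes a coordinate of the abelianization map, iterating over the finite covers $\ker(A_\Gamma\to(\Z/2)^n)$ (legitimate since right-angled Artin groups are residually finite $2$-groups) until $\pi_1(X)=1$, whence $X$ is CAT(0).

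\emph{Part (2).} One direction is easy: if $g(X)\geq 2$ then $\pi_1(X)\twoheadrightarrow F_2$ by the first dictionary entry, impossible for an abelian group, so $\pi_1(X)$ abelian gives $g(X)\leq 1$, while $g(X)\geq 1$ holds once $\pi_1(X)\neq 1$ by Part (1) (the statement is meant for non-CAT(0) $X$). The substantive direction, ``$g(X)=1\Rightarrow\pi_1(X)$ abelian'', is the result after which the paper is named, and I expect it to be the main obstacle of the whole theorem. I would fix a non-separating hyperplane $H$ and analyze the presentation $\pi_1(X)=\pi_1(Y)\ast_{\pi_1(H)}$ with $Y=X\setminus N(H)$ connected: the hypothesis $g(X)=1$ says that no hyperplane of $X$ disjoint from $H$ remains non-separating with connected complement in $Y$, forcing $Y$ cut along such hyperplanes to be as simple as possible; pushing this downward one shows $Y$ retracts onto a lower-dimensional torus meeting $H$ along a subtorus, giving $\pi_1(X)\cong\Z^n$. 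The delicacy is the bookkeeping of exactly which hyperplanes of $Y$ come from hyperplanes of $X$ genuinely disjoint from $H$ — in the $n$-torus every other hyperplane crosses $H$, which is precisely why the genus does not exceed $1$ — and proving that the only non-degenerate relative configuration is the toral one.

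\emph{Part (3).} For $g=0$ this is Part (1). For $g\geq 1$, the bound $g(\pi_1(\Sigma_g))\leq g$ is purely algebraic: any special $X$ with $\pi_1(X)\cong\pi_1(\Sigma_g)$ and genus $k$ gives $\pi_1(\Sigma_g)\twoheadrightarrow F_k$, hence an injection $H^1(F_k;\Q)\hookrightarrow H^1(\Sigma_g;\Q)$ onto a subspace on which the cup-product (symplectic) form vanishes identically, i.e.\ a $k$-dimensional isotropic subspace of a $2g$-dimensional symplectic space, so $k\leq g$. For the lower bound one must exhibit a single special, finite dimensional, nonpositively curved square complex homotopy equivalent to $\Sigma_g$ together with $g$ pairwise disjoint non-separating hyperplanes of connected complement: the natural approach is to put a square structure on $\Sigma_g$ realising a classical genus-$g$ cut system (a maximal family of disjoint non-separating simple closed curves cutting $\Sigma_g$ into a sphere with $2g$ holes) as hyperplanes, and the obstacle is verifying speciality — the hyperplanes embed, are two-sided, and neither self- nor inter-osculate — which can be arranged by subdividing finely enough, and if necessary doubling the square structure transverse to the cut curves. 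Combined with $\pi_1(X)\twoheadrightarrow F_{g(X)}$, these two bounds give $g(\pi_1(\Sigma_g))=g$.
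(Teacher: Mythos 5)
Your overall structure matches the paper's: Part (1) is a direct consequence of collapsing separating hyperplanes, Part (2) reduces to showing $g(X)=1\Rightarrow\pi_1(X)$ abelian, and Part (3) uses the Lagrangian/symplectic upper bound plus an explicit square complex for the lower bound. Parts (1) and (3) are essentially right (though for Part (1) you propose a detour through residual nilpotence and finite covers; the paper's argument is simply that if every hyperplane separates, one can collapse them all — any compact subset meets only finitely many unit-neighborhoods — to contract the space to a point, so $\pi_1(X)=1$; the residual-nilpotence route is not obviously simpler and risks circularity).

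The genuine gap is in Part (2), and it is not a small one: the direction $g(X)=1\Rightarrow\pi_1(X)$ abelian is the heart of the theorem, and your proposal does not actually prove it. Your sketch — ``fix a non-separating hyperplane $H$, cut along it, and push down to show $Y$ retracts onto a torus meeting $H$ in a subtorus'' — is the kind of inductive simplification one would like to run, but as stated it has no mechanism. Cutting along $H$ produces an HNN presentation, but nothing about $g(X)=1$ directly says $Y$ is toral, nor is there a visible induction variable that decreases. What is missing is the technical device the paper builds: pass to an irreducible $X$ (no separating hyperplanes), define the equivalence relation of \emph{ultra-parallelism} generated by disjointness of hyperplanes, and then prove — via a sequence of lemmas (embedded unit neighborhoods persist under a well-chosen collapse order, using that a connected graph has at least two non-cut vertices; and a self-osculation/interosculation analysis showing that $g(X)=1$ forbids the bad geodesic configurations) — that one can collapse all hyperplanes ultra-parallel to a chosen $H$ while staying NPC, special, and of genus one. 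After exhausting all ultra-parallelism classes, every pair of surviving hyperplanes crosses, so the local isometry of $X$ into its Salvetti lands in a torus, yielding $\pi_1(X)\hookrightarrow\Z^n$ (hence $\cong\Z^n$ by compactness of the image). Without this collapse machinery, or some replacement for it, the implication $g(X)=1\Rightarrow\pi_1(X)$ abelian remains unproven in your proposal, and the finite-dimensional (non-compact) case — where the paper also has to argue separately that infinitely many non-separating hyperplanes already force $g(X)\geq 2$ — is untouched.
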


In particular, the classical definition of genus agrees with ours.  The geometric content of this theorem is that if $G$ is special and not abelian, and $X$ is any special cube complex with $\pi_1(X)=G$, there exists a map of cube complexes $X\rightarrow S^1\vee S^1$.  We remark that for a  general group $G$ a notion related to the genus is  the \emph{corank}, i.e. the largest rank of a free group onto which $G$ surjects.  If $G=\pi_1(M)$ for some smooth manifold $M$, then the corank is the same as the \emph{cut number}, the largest number of disjointly embedded, 2-sided hypersurfaces in $M$ whose union does not separate.  This follows from the fact that the wedge of $n$ circles is a $K(F_n,1)$.  Thus the genus of a special group $G$ gives a lower bound for the corank.  It would be interesting to know whether the genus is always equal to the corank.

In the second half of the paper we investigate automorphisms of special groups and the action of the automorphisms of a cube complex on first homology. There are two parts to this problem: (1) which automorphisms of $G$ can be realized as an automorphism of $X$, a compact cube complex with $\pi_1(X)=G$, and (2) when does an automorphism of $X$ act non-trivially on $H_1(X)=H_1(G)$.   Denote by $\Aut(G)$ the group of automorphisms of $G$, $\Out(G)$ the group of outer automorphisms of $G$, and $\Tor(G)\leq \Out(G)$ the subgroup of automorphisms acting trivially on $H_1(G)$.

The motivation for answering these questions comes from classical results on Riemann surfaces and free groups.  Let $\Sigma=\Sigma_g$ be a surface of genus $g\geq2$, and denote by $\Mod(\Sigma)$ its mapping class group, \emph{i.e.} the group of orientation-preserving diffeomorphisms of $\Sigma$ up to homotopy. The Dehn--Nielsen--Baer theorem identifies $\Mod(\Sigma)$ as an index 2 subgroup of $\Out(\pi_1(\Sigma))$. If $\phi\in \Mod(\Sigma)$ has finite order, it is a classical result that there exists a hyperbolic surface $X$ diffeomorphic to $\Sigma$ and an isometry $f:X\rightarrow X$ which \emph{realizes} the homotopy class of $\phi$ (see for example, \cite{FaMa12}).  This fact can be used to show that the Torelli subgroup $\Tor(\pi_1(\Sigma))$ is torsion-free, by showing that any isometry of a constant curvature surface acts non-trivially on first homology.  The theorem reduces an algebraic question about subgroups of the mapping class groups to a geometric question about isometries of a compact surface.  

Similarly, for free groups, Culler \cite{Cul84}, Zimmermann \cite{Zim81}, and Khramtsov \cite{Kh85} each independently showed that any finite order automorphism $\phi \in \Out(F_n)$ can be realized as an automorphism of a simplicial graph $\Gamma$ of rank $n$ . An easy geometric argument then recovers the result of Baumslag-Taylor that $\Tor(F_n)$ is torsion-free for all $n$ \cite{BaTa68}.

Recently, for each raag $A_\Gamma$, Charney, Stambaugh and Vogtmann \cite{CSV12} defined a contractible simplicial complex $K_\Gamma$ on which a subgroup of $\Out(A_\Gamma)$ acts properly discontinuously, cocompactly by simplicial automorphisms.  Their space is defined in analogy with outer space for free groups, and if $A_\Gamma=F_n$, $K_\Gamma$ is just the spine of outer space.  Using $K_\Gamma$, we show
\begin{thm}\label{main2} Let $\phi\in \Out(A_\Gamma)$ have finite order.  Then $\phi$ acts non-trivially on $H_1(A_\Gamma)$. In particular, $\Tor(A_\Gamma)$ is torsion-free.  
\end{thm}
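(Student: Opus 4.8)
\textbf{The plan} is to follow the scheme that works for surface groups and for free groups --- realize the finite-order outer automorphism geometrically, then run a rigidity argument --- with the Charney--Stambaugh--Vogtmann complex $K_\Gamma$ playing the role of Teichm\"uller space or of Outer space. I argue by contradiction. First, a reduction: if $A_\Gamma$ is abelian, i.e.\ $\Gamma$ is complete, then $\Out(A_\Gamma)=\Aut(A_\Gamma)$ acts faithfully on $H_1(A_\Gamma)$ and there is nothing to prove, so assume $A_\Gamma$ is non-abelian and that some $\phi\neq1$ of finite order lies in $\Tor(A_\Gamma)$. Since $\Tor(A_\Gamma)$ is a subgroup and every power of $\phi$ again acts trivially on $H_1$, after replacing $\phi$ by a suitable power I may assume $\phi$ has prime order $p$.

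Next I realize $\phi$ geometrically. Recall that $K_\Gamma$ is a finite-dimensional contractible simplicial complex on which the untwisted subgroup of $\Out(A_\Gamma)$ acts properly, cocompactly and simplicially; a vertex $\rho$ is a marked \emph{reduced $\Gamma$-complex} $X_\rho$ --- a compact special (hence non-positively curved, hence aspherical) cube complex with $\pi_1(X_\rho)\cong A_\Gamma$ --- and the stabilizer of $\rho$ is the finite group of marked automorphisms of $X_\rho$. Assuming for the moment that $\langle\phi\rangle$ acts on $K_\Gamma$ (I return to this point below), I use the fact that a finite group acting simplicially on a finite-dimensional contractible complex has non-empty fixed set: after a barycentric subdivision to remove inversions, this is Smith's theorem that a $\Z/p$-action on an $\mathbb{F}_p$-acyclic complex has $\mathbb{F}_p$-acyclic, in particular non-empty, fixed set. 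Hence $\langle\phi\rangle$ fixes a vertex $\rho$, and its stabilizer realizes $\phi$ as a combinatorial automorphism $f$ of $X:=X_\rho$ with $f^p=\id$ and $f_*=\phi\neq1$ in $\Out(\pi_1 X)$.

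Now the rigidity argument. Since $X$ is aspherical, $H^*(X;\Z)\cong H^*(A_\Gamma;\Z)$, which is torsion-free and generated as a ring by $H^1$; as $f$ acts trivially on $H^1(X;\Z)=H^1(A_\Gamma)$ and $f^*$ is multiplicative, $f$ acts trivially on $H_*(X;\Z)$, hence on $H_*(X;R)$ for every coefficient ring $R$. I pass to the CAT(0) universal cover $\til X$ and a lift $\til f$; it conjugates the deck group $A_\Gamma$ by the order-$p$ automorphism $\alpha:=f_*$, its fixed set $\til X^{\til f}$ is (after a suitable subdivision) a convex, hence contractible, subcomplex, and the image $X^f\subseteq X$ is a compact, locally convex subcomplex whose components are aspherical with fundamental groups the fixed subgroups of $\alpha$. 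The branched-covering count $\chi(X)=p\,\chi(X/\langle f\rangle)-(p-1)\,\chi(X^f)$, together with $\chi(X/\langle f\rangle)=\chi(X)$ (because $H_*(X/\langle f\rangle;\Q)\cong H_*(X;\Q)^{\langle f\rangle}=H_*(X;\Q)$), yields $\chi(X^f)=\chi(X)$. Upgrading this with the Smith sequences --- using that $f$ acts trivially on $H_*(X;\mathbb{F}_p)$ and that a fixed point gives a section of the Borel fibration --- I expect to conclude that $X^f\hookrightarrow X$ is a homology equivalence, and then, since $X$ is a reduced $\Gamma$-complex (no collapsible free faces), that $X^f=X$, so $f=\id$, contradicting $f_*=\phi\neq1$. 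Hence $\Tor(A_\Gamma)$ is torsion-free.

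\textbf{The hard part} is the final step of the rigidity argument. For free groups the analogous step is easy: equality of Euler characteristics plus connectedness of the fixed set already forces the fixed subgraph to equal a reduced graph. In higher dimensions $\chi$ is far too crude --- a non-trivial finite rotation of a subdivided circle is homotopically trivial and acts trivially on homology yet has empty fixed set, so it has the same Euler characteristic as its fixed set without being a homology equivalence --- and it is precisely the reducedness of $\Gamma$-complexes, together with the CAT(0) cube-complex geometry of $\til X$, that must be exploited to rule out such degenerate behaviour. A secondary difficulty is the realization step itself: $K_\Gamma$ carries only the untwisted subgroup of $\Out(A_\Gamma)$, so to arrange that $\langle\phi\rangle$ acts on $K_\Gamma$ one must treat the ``twist'' automorphisms --- which are explicit, and each of infinite order --- separately.
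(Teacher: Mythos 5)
Your realization step matches the paper's, but you have left two genuine gaps, one of which is fatal and which you yourself flag as "the hard part."

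First, the twist issue. You observe that $K_\Gamma$ carries only an action of the untwisted (long-range) subgroup $\Out_{\ell}(A_\Gamma)$ and propose to "treat the twist automorphisms separately," noting they have infinite order. This does not resolve the problem: a finite-order element of $\Tor(A_\Gamma)$ could a priori be a product involving adjacent transvections. The paper closes this by invoking Day's generating set for $\IA(A_\Gamma)$ (partial conjugations and commutator transvections $\tau_{w_1,w_2,v}$ with $v,w_1,w_2$ pairwise non-adjacent), from which $\mathcal{I}(A_\Gamma) \leq \Out_{\ell}(A_\Gamma)$ follows immediately. So there is nothing to treat separately, but you need to say so.

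Second, and decisively, your rigidity step is a different route and it is incomplete exactly where you say it is. You want to run Smith theory and an Euler-characteristic count to show that the fixed set $X^f$ is all of $X$. As your own rotation-of-a-subdivided-circle example shows, an automorphism acting trivially on $H_*$ can have empty fixed set, so no amount of $\chi$ or $\mathbb{F}_p$-homology bookkeeping forces $X^f=X$; you gesture at "reducedness" and the CAT(0) geometry to save the day but supply no argument, and indeed it is not clear that fixed-set considerations are the right lever. The paper's actual argument (Proposition \ref{Criterion}) never discusses $X^f$ at all. It works entirely with hyperplanes: if $f_*=\id$ on $H_1$, then by hypothesis (1) no hyperplane $K$ can meet $f(K)$ transversely (otherwise some $\alpha\in H_1$ would have $\alpha.K\neq\alpha.f(K)$), so each hyperplane is either preserved or carried off itself disjointly; in the latter case, hypothesis (2) produces a non-separating hyperplane $L_i$ in a complementary component, and naturality of the pairing gives $1=\alpha_i.L_i=\alpha_i.f(L_i)=0$, a contradiction; then hypothesis (3) says that a hyperplane-preserving automorphism is the identity. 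The remaining work, Proposition 5.10, is a purely combinatorial verification that $\Gamma$-Whitehead blow-ups satisfy hypotheses (1)--(3), using the structure of partitions (singles, doubles, $\max(P)$, the regions of $\mathbf{\Pi}$). This is the ingredient your proposal is missing, and without it your proof does not close.
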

This theorem is originally due to Wade \cite{Wa13}, and independently Toinet \cite{To13}, who proved the stronger result that the Torelli subgroup associated to $\Out(A_\Gamma)$ is residually torsion-free nilpotent. However, both these proofs are almost entirely algebraic, while ours is geometric in the same spirit as those outlined for mapping class groups and free groups above. To prove Theorem \ref{main2}, we first realize $\phi$ as a finite order automorphism of a compact special cube complex $X$ whose fundamental group is $A_\Gamma$, then prove that any such automorphism acts non-trivially on $H_1(X)=H_1(A_\Gamma)$. 

We also include a realization result for automorphisms of special groups which are $\delta$-hyperbolic, and the following result about large groups which is elementary but which we nevertheless could not find in the literature.    
\begin{thm} \label{main3}Suppose $G$ surjects onto $F_2$ and let $\phi\in \Out(G)$ have finite order.  Then there exists a finite index normal subgroup $N\trianglelefteq G$ and an outer automorphism $\psi \in \Out(N)$ such that $\psi$ acts non-trivially on $H_1(N)$ and $\psi_*=\phi_*\circ\iota_*$, where $\iota:N\rightarrow G$ is the inclusion.
\end{thm}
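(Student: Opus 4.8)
The plan is as follows. Fix a representative $\Phi\in\Aut(G)$ of $\phi$; since $\phi$ has finite order $m$ in $\Out(G)$ there is $g_0\in G$ with $\Phi^m=\iota_{g_0}$, the inner automorphism given by conjugation by $g_0$. I claim it suffices to produce a single $\Phi$-invariant, finite index, normal subgroup $N\trianglelefteq G$ on whose rational first homology $H_1(N;\Q)$ the conjugation action of $G/N$ is non-trivial. Granting this, let $\psi_0=[\Phi|_N]\in\Out(N)$ (well defined since $\Phi(N)=N$); it has finite order, and applying $H_1$ to the identity $\iota\circ\Phi|_N=\Phi\circ\iota$ of homomorphisms $N\to G$ gives $\iota_*\circ(\psi_0)_*=\Phi_*\circ\iota_*=\phi_*\circ\iota_*$ (two representatives of $\phi$ differ by an inner automorphism of $G$, which is trivial on $H_1$). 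If $(\psi_0)_*$ is non-trivial on $H_1(N;\Z)$, take $\psi=\psi_0$. Otherwise $(\psi_0)_*=\mathrm{id}$, so $\phi_*\circ\iota_*=\iota_*$; choose $w\in G$ whose conjugation action on $H_1(N;\Q)$ is non-trivial (possible by assumption), and take $\psi=[\iota_w|_N]\in\Out(N)$. This $\psi$ acts non-trivially on $H_1(N;\Q)$, hence on $H_1(N;\Z)$, it has finite order (a power of $w$ lies in $N$), and applying $H_1$ to $\iota\circ\iota_w|_N=\iota_w^G\circ\iota$ --- where $\iota_w^G$ is conjugation by $w$ in $G$, trivial on $H_1(G)$ --- gives $\iota_*\circ\psi_*=\iota_*=\phi_*\circ\iota_*$. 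In both cases $\psi$ has the required properties (and is finite order in $\Out(N)$).

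To construct $N$, let $q\colon G\twoheadrightarrow F_2$ be the given surjection, fix any proper finite index normal subgroup $L\trianglelefteq F_2$ (e.g. the kernel of $F_2\to\Z/2$), and set $J=q^{-1}(L)\trianglelefteq G$. Since $J$ is normal, $\Phi^m(J)=\iota_{g_0}(J)=J$, so the $\Phi$-orbit of $J$ is finite and $N:=\bigcap_{i\in\Z}\Phi^i(J)$ is a finite intersection of finite index normal subgroups; thus $N$ is finite index and normal, $\Phi(N)=N$, and $N\le J$ so that $L':=q(N)\le L$ is a proper finite index normal subgroup of $F_2$, with $P':=F_2/L'\neq 1$. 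Now $q|_N\colon N\twoheadrightarrow L'$ is equivariant for the conjugation actions, so on rational homology it induces a surjection of $\Q[G/N]$-modules $H_1(N;\Q)\twoheadrightarrow H_1(L';\Q)$, where $G/N$ acts on the target through the natural surjection $G/N\twoheadrightarrow P'$. Since $L'$ is a finite index normal subgroup of the free group $F_2$, the standard computation with the corresponding cover of a wedge of two circles gives $H_1(L';\Q)\cong\Q^2\oplus I_{P'}$ as $\Q P'$-modules, where $I_{P'}\subseteq\Q P'$ is the augmentation ideal; since $\Q P'=\Q\oplus I_{P'}$ is a faithful $\Q P'$-module and $P'\neq 1$, the summand $I_{P'}$ carries a non-trivial $P'$-action, so $P'$ acts non-trivially on $H_1(L';\Q)$. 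A surjection of modules onto a module with non-trivial action itself has non-trivial action, so $G/N$ acts non-trivially on $H_1(N;\Q)$, as required. This completes the argument.

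The argument is, as the statement promises, elementary; the one place that takes any care is the module-theoretic point that a proper finite quotient of $F_2$ forces the deck action of $G/N$ on $H_1(N;\Q)$ to be non-trivial --- essentially the statement that the regular representation appears in the homology of a non-trivial cover --- which is what makes the hypothesis that $G$ surject onto $F_2$ do real work. The second point worth flagging is that one should not insist $\psi$ be literally the restriction of $\phi$: when $\Phi|_N$ happens to act trivially on $H_1(N;\Z)$ one must instead use a deck automorphism, which is legitimate precisely because in that case $\phi_*$ already fixes the image of $\iota_*$. Finally, $G$ is nowhere assumed finitely generated and the argument never needs it; one should only keep track of $\Q$ versus $\Z$ coefficients, since the non-triviality is obtained rationally and then transported to $H_1(N;\Z)$.
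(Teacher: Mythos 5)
Your proof is correct, and it tracks the paper's strategy in its skeleton: take the given surjection $q\colon G\twoheadrightarrow F_2$, pull back a proper finite-index normal subgroup of $F_2$, intersect its $\Phi$-orbit to obtain a $\Phi$-invariant finite-index normal $N\trianglelefteq G$, show that $G/N$ acts non-trivially on $H_1(N;\Q)$, and if $\Phi|_N$ happens to be trivial on $H_1(N)$ replace it by a conjugation $c_w$ (which is harmless for the compatibility relation $\iota_*\circ\psi_*=\phi_*\circ\iota_*$, since $\phi_*\circ\iota_*=\iota_*$ in that case). The genuine difference is in how the non-triviality of the $G/N$-action is established. The paper (Proposition \ref{LargeAut}) chooses $K'\trianglelefteq F_2$ with $\mathrm{rk}\,H_1(K')=d$ much larger than $b_1(G)$, and runs a transfer argument: a trivial $G/N$-action would force $\mathrm{rk}\,H_1(N)=b_1(G)<d$, a contradiction. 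This requires $G$ to be finitely generated (so that $b_1(G)$ is finite), a hypothesis that appears explicitly in Proposition \ref{LargeAut} but not in the statement of Theorem \ref{main3}. You instead push forward to the free quotient: the equivariant surjection $H_1(N;\Q)\twoheadrightarrow H_1(L';\Q)$ of $\Q[G/N]$-modules and the rational Chevalley--Weil computation $H_1(L';\Q)\cong\Q\oplus\Q P'$ (which for $F_2$ is $\Q^2\oplus I_{P'}$) show that $P'=F_2/L'\neq 1$ already acts non-trivially on $H_1(L';\Q)$, hence $G/N$ acts non-trivially on $H_1(N;\Q)$. This makes no use of finite generation of $G$ and needs only that $L$ be a proper finite-index normal subgroup of $F_2$, rather than one with large abelianization. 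Your argument therefore proves Theorem \ref{main3} exactly as stated, without the extra finiteness hypothesis that the paper's proof actually consumes; it also produces a $\psi$ that visibly has finite order in $\Out(N)$ in both cases, which the paper asserts but does not fully justify for its composite $c_g\circ\what{f}$.
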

Rephrased in terms of spaces, if $X$ is a $K(G,1)$ and $\phi \in \Out(G)$ we can represent $\phi$ as a homotopy equivalence $f:X\rightarrow X$.  Then there exists a finite regular cover $p:\what{X}\rightarrow X$, a homotopy equivalence $\what{f}:\what{X}\rightarrow \what{X}$ such that $f\circ p= p\circ \what{f}$, and such that $\what{f}_*$ acts non-trivially on $H_1(\what{X})$.  
\\

\noindent \textbf{Outline:} In section 2, we discuss basic facts and terminology concerning NPC cube complexes, hyperplanes, and cohomology.  This is where we introduce one of our main technical tools, namely collapsing hyperplanes.  In section 3, we define the genus of a special cube complex, compute the genus for several examples of groups, and prove Theorem \ref{main1}.  In section 4, we discuss automorphisms of cube complexes, and determine a criterion which guarantees that any automorphism acts non-trivially on first homology.  We also give a proof of Theorem \ref{main3} and some of its applications.  Finally, in section 5 we review the construction of Charney--Stambaugh--Vogtmann's outer space for raags and apply the criterion of section 4 to blow-ups of Salvettis to prove Theorem \ref{main2}.
\\

\noindent \textbf{Acknowledgements:} I would like to thank my advisor, Andrew Putman, for encouraging me and helping me to clarify my ideas. I would also like to thank Jason Manning for several useful discussions and comments, and Ashot Minasyan for explaining how to derive Theorem \ref{main} from his results in \cite{AM15}.  Finally, I thank Letao Zhang and Neil Fullarton helping me through every incarnation of this paper up until now. 

\section{Hyperplanes and cohomology}
Let $X$ be a \emph{non-positively curved (NPC)} cube complex. $X$ is a path-metric space obtained by gluing together standard Euclidean cubes of the form $[-1,1]^n$ by identifying faces by isometries.   For us, $X$ will always be finite dimensional.  Recall that NPC means the universal cover $\widetilde{X}$ of $X$ is a CAT(0) cube complex, and is equivalent to Gromov's condition that the link of each vertex is a flag simplicial complex \cite{Wise12}. If $x\in X^{(0)}$ denote the \emph{link} of $x$ by $lk(x)$.  

We recall that a \emph{midcube} of a cube $C=[-1,1]^n$ is a subset of $C$ obtained by restricting one of the coordinates to 0. A \emph{hyperplane} of $X$ is a maximal connected subset of $X$ which meets each cube in a midcube. Hyperplanes are important subsets of cube complexes and will play a key role in what follows.  If $H\subset X$ and $r>0$, we will use the notation $N_r(H)$ to denote the open $r$-neighborhood of $H$, and $\overl{N_r}(H)$ to denote the closed $r$-neighborhood.  

The universal cover $\widetilde{X}$ comes equipped with both a CAT(0) metric and a combinatorial metric defined on its 0-skeleton, where the distance between two vertices is the number of hyperplanes which separate them.  A \emph{combinatorial geodesic} in $\widetilde{X}$ is a path in the 1-skeleton of $\widetilde{X}$ which crosses each hyperplane at most once (see \cite{Wise12}).  A \emph{combinatorial geodesic} in $X$ is a path in the 1-skeleton of $X$ which lifts to a combinatorial geodesic in $X$.

\begin{definition}A map of cube complexes $f:X\rightarrow Y$ is called a \textbf{local isometry} if the following two conditions hold:
\begin{enumerate}
\item For every $x\in X^{(0)}$, the map $f:lk(x)\rightarrow lk(f(x))$ is injective.
\item If $u,v\in lk(x)^{(0)}$ and $f(u)$ and $f(v)$ are adjacent in $lk(f(x))$, then $u$ and $v$ are adjacent in $lk(x)$.
\end{enumerate}
\end{definition}
\noindent
Local isometries lift to convex embeddings of universal covers: 
\begin{lemma}\label{localIsom}$($\cite{Wise12}, Lemma 3.12 $ )$ If $f:X\rightarrow Y$ is a local isometry, then the induced map on universal covers $\til{f}:\til{X}\rightarrow \til{Y}$ is a convex embedding of CAT(0) spaces.  In particular, $f_*:\pi_1(X)\hookrightarrow \pi_1(Y)$ is an injection. 
\end{lemma}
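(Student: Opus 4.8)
The plan is to show that the two combinatorial conditions in the definition of local isometry force $\til f$ to be well behaved with respect to the combinatorial (hyperplane-counting) metric, and then to upgrade the resulting subcomplex structure to convexity for the CAT(0) metric. First I would lift $f$ to a map $\til f:\til X\to\til Y$ (possible since $\til X$ is simply connected) and observe that $\til f$ again satisfies conditions (1)--(2): the covering projections $\til X\to X$ and $\til Y\to Y$ restrict to isomorphisms on links of vertices, so $\til f$ has exactly the same link behavior as $f$. Thus it suffices to prove the lemma for the local isometry $\til f$ between the CAT(0) cube complexes $\til X$ and $\til Y$, together with the $\pi_1$-statement.

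The heart of the argument is the claim that $\til f$ sends combinatorial geodesics to combinatorial geodesics (which also yields that $\til f$ preserves the combinatorial metric on $0$-skeleta, hence is injective on vertices). To prove it, fix vertices $a,b$ of $\til X$, a combinatorial geodesic $\gamma$ from $a$ to $b$, and a combinatorial geodesic $\delta$ in $\til Y$ from $\til f(a)$ to $\til f(b)$. If $\til f(\gamma)$ is not geodesic, then $\til f(\gamma)\cdot\overl\delta$ is a nontrivial nullhomotopic loop and bounds a reduced square disk diagram $D\to\til Y$ with nondegenerate part; by the standard structure theory of reduced disk diagrams in CAT(0) cube complexes, the outer boundary of $D$ carries either a spur or a corner of a square. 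A spur on the $\gamma$-side contradicts $\gamma$ being geodesic, and a spur on the $\delta$-side contradicts $\delta$ being geodesic, so there is a corner: two consecutive edges either of $\til f(\gamma)$ or of $\delta$ span a square in $\til Y$. In the first case condition (2) of the local isometry property pulls this square back to a square of $\til X$ spanned by the corresponding consecutive edges of $\gamma$; flipping $\gamma$ across this square gives a new combinatorial geodesic $\gamma'$ from $a$ to $b$ whose $\til f$-image cobounds with $\delta$ a diagram with strictly fewer squares, and in the second case flipping $\delta$ does the same. Choosing the pair $(\gamma,\delta)$ — over the finitely many geodesics related to $\gamma$ by square moves and all geodesics from $\til f(a)$ to $\til f(b)$ — so as to minimize the number of squares of $D$, both cases contradict minimality unless $D$ is degenerate, i.e. $\til f(\gamma)=\delta$. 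Hence some $\til f$-image of a geodesic from $a$ to $b$ is a geodesic, so $d_{\til Y}(\til f(a),\til f(b))=d_{\til X}(a,b)$; since a combinatorial path realizing the distance between its endpoints is a geodesic, $\til f$ carries every combinatorial geodesic to a combinatorial geodesic.

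Given the claim, the rest is formal. Condition (1) makes $\til f$ locally injective, and injectivity on $0$-skeleta promotes this to an injective cubical map, so $\til f$ is an isomorphism onto the subcomplex $Z=\til f(\til X)\subseteq\til Y$. Iterated use of condition (2) shows $Z$ is full (a cube of $\til Y$ all of whose vertices and edges lie in $Z$ lifts to a cube of $\til X$), and geodesic-preservation, combined with the fact that in a CAT(0) cube complex any two combinatorial geodesics with common endpoints differ by a sequence of square moves, shows $Z$ is combinatorially convex. Since a full, combinatorially convex subcomplex of a CAT(0) cube complex is CAT(0)-convex, $\til f$ is an isometric embedding onto a convex subset. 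Finally, if $g\in\ker(f_*)$ then the associated deck transformation of $\til Y$ is trivial, and $\til f$ is equivariant, so $\til f\circ g=\til f$; injectivity of $\til f$ forces $g=\id$, so $f_*$ is injective.

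The main obstacle is the disk-diagram step of paragraph two: extracting a boundary corner from a failure of geodesity, pulling it back through condition (2), and organizing the minimization so that the square moves on $\gamma$ and $\delta$ genuinely terminate. (Alternatively one can avoid disk diagrams entirely: check directly that conditions (1)--(2) make $\til f$ a local isometric embedding of the CAT(0) metric — it restricts to an isometry from each star $\st(x)$, which is the cone on $\lk(x)$, onto the cone on a full subcomplex of $\lk(\til f(x))$ — and then invoke the Cartan--Hadamard-type theorem that a local isometry into a complete CAT(0) space is $\pi_1$-injective and lifts to an isometric embedding with convex image; the obstacle there is precisely verifying this "isometry on stars" statement, which again reduces to fullness of the link map.)
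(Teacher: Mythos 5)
The paper does not prove this statement: it is quoted verbatim from Wise's monograph (Lemma 3.12 of \cite{Wise12}) and simply cited, so there is no in-paper argument to compare against. Your proposal is therefore an attempt to re-derive a cited black box, and both of your two sketches track the standard arguments one finds in the literature (Haglund--Wise, Wise's \emph{Cube complexes}, Bridson--Haefliger II.4). The overall plan is sound: lift $f$, show geodesics map to geodesics (equivalently, that $\til f$ preserves the wall metric), deduce injectivity and combinatorial convexity, upgrade to CAT(0)-convexity via fullness of the link maps, and conclude $\pi_1$-injectivity from equivariance.

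However, the disk-diagram step has a real gap. After dismissing spurs and corners lying strictly on the $\til f(\gamma)$-side or on the $\delta$-side, you assert that the flip strictly decreases area while keeping $D$ of the required bigon form. But a feature of the reduced diagram may sit at one of the two \emph{transition vertices} $\til f(a)$ or $\til f(b)$, with one edge on each side of the bigon. A spur there is harmless (delete the repeated edge from both $\gamma$ and $\delta$), but a corner there is not: flipping replaces the two-edge corner $e_\gamma \cdot \overline{e_\delta}$ through $\til f(b)$ with a two-edge path through the diagonally opposite vertex of the square, so the new boundary loop no longer passes through $\til f(b)$ and cannot be written as $\til f(\gamma')\cdot \overline{\delta'}$ with $\gamma',\delta'$ sharing the original endpoints. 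Moreover you cannot invoke condition (2) to pull this square back to $\til X$, since one of its two boundary edges (the last edge of $\delta$) need not lie in the image of $\til f$. Your minimization over square moves of $\gamma$ and over all $\delta$ therefore does not obviously terminate, and Greendlinger's lemma for square diagrams does not by itself guarantee a feature disjoint from two prescribed boundary points.

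The cleanest repair avoids disk diagrams: among all combinatorial geodesics in $\til X$ with non-geodesic image, take one, $\gamma = e_1\cdots e_n$, of minimal length $n$. Minimality forces $\til f(e_1)$ and $\til f(e_n)$ to be dual to the same hyperplane $H$ of $\til Y$, and forces $\til f(e_2\cdots e_{n-1})$ to be a geodesic with both endpoints on the same side of the carrier of $H$; convexity of half-carriers then shows every $\til f(e_k)$ with $1<k<n$ is dual to a hyperplane crossing $H$. In particular $\til f(e_{n-1})$ and $\til f(e_n)$ span a square, so condition (2) gives a square on $e_{n-1},e_n$ in $\til X$; flipping produces a geodesic $\gamma' = e_1\cdots e_{n-2} e_n' e_{n-1}'$ whose length-$(n-1)$ prefix already has non-geodesic image, contradicting minimality of $n$. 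Alternatively, the second route you mention — check that $\til f$ restricts to an isometry from each closed star onto the cone on a full (hence flag, hence $\pi$-convex) subcomplex of the target link, and then apply the CAT(0) Cartan--Hadamard/local-to-global theorem — is also correct and sidesteps diagrams entirely; the only point to verify carefully is that the image of a closed star is actually \emph{convex} in the target star, which is exactly where fullness of the link map is used.
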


\subsection{Special cube complexes}
Recall that $X$ is called \emph{special} if $X$ is NPC and that none of the following hyperplane pathologies occur in $X$:
\begin{enumerate}
\item One-sided hyperplanes,
\item Self-intersecting hyperplanes,
\item (Directly) Self-osculating hyperplanes,
\item Interosculating hyperplanes.

\end{enumerate}
For more details, see \cite{HaWi08} and \cite{Wise12}.  In particular, for any hyperplane $H\subset X$, conditions (1) and (2) imply that a $\epsilon$-neighborhood $N_\epsilon(H)$ of $H$ is isometric to a product $H\times[-\epsilon,\epsilon]$ for some $\epsilon>0$ ($\epsilon=1/2$ will do). Choosing an orientation on $H$ we can consistently orient the 1-cubes dual to a midcube of $H$.  We say that two oriented 1-cubes $e$ and $e'$ are \emph{parallel} if they are dual to the same hyperplane with the same orientation, denoted $e\|e'$.  We use square brackets $[e]$ to designate the equivalence class of oriented edges parallel to $e$.  

In the sequel, we will often cut open cube complexes along hyperplanes and consider the resulting cube complex. Define \emph{$X$ split along $H$} to be the complex $X|H$ defined as follows.  $X\setminus N_1(H)$ is a closed subcomplex of $X$, hence is compact special (\cite{HaWi08}, Corollary 3.9) .  There are natural inclusions $\iota^+,\iota^-:H\rightarrow X\setminus N_1(H)$. Note that it may be the case that $X=H\times \Sa^1$ in which case $\iota^+=\iota^-$. Then define \[X|H=H\times[0,2]\coprod X\setminus N_1(H)\coprod H\times[3,5]/\left(H\times \{2\}\sim\iota^-(H), H\times \{3\}\sim\iota^+(H)\right).\]
We denote by $H^-$ the image of $H\times\{0\}$ and by $H^+$ the image of $H\times\{5\}$ under this construction.

\begin{definition} A finitely generated group $G$ is \textbf{(NPC) cubulated} if $G=\pi_1(X)$ for some compact NPC cube complex $X$.  We say further that $G$ is \textbf{compact special} if $X$ is compact special. 
\end{definition}

The prototypical examples of compact special groups are \emph{right-angled Artin groups (raags)}, defined as follows.  
\begin{definition} Let $\Gamma=(V,E)$ be a finite simplicial graph.  If $V=\{v_1,\ldots,v_n\}$, the right-angled Artin group $A_\Gamma$ associated to $\Gamma$ is the group with presentation \[A_\Gamma=\left \langle\begin{array}{l|l} v_1,\ldots v_n & [v_i,v_j],\mbox{ if $v_i$, $v_j$ share and edge in $\Gamma$}\end{array}\right \rangle.\]
\end{definition}

To each raag $A_\Gamma$ is associated a canonical NPC compact special cube complex called the \emph{Salvetti complex} $\Sa_\Gamma$. The Salvetti complex has the following cell structure:
\begin{itemize}
\item $\Sa_\Gamma^{(1)}$: Take a wedge of $n$ circles, one for each vertex $v_1,\ldots, v_n\in V$.
\item $\Sa_\Gamma^{(2)}$: For each edge $(v_i,v_j)\in E$, attach a square $[-1,1]^2$ along $v_iv_jv_i^{-1}v_j^{-1}$.  Its image is a torus $\T^2\subseteq \Sa_\Gamma^{(2)}$.
\item $\Sa_\Gamma^{(k)}$: For each complete $k$-subgraph $K$ of $\Gamma$, attach a $k$-cube $[-1,1]^k$ by identifying its boundary with the $k$-many $(k-1)$-tori in $\Sa_\Gamma^{(k-1)}$ corresponding to complete $(k-1)$-subgraphs of $K$.
\end{itemize}
\noindent
In fact, Salvetti complexes are universal receptors for compact special cube complexes:
\begin{thm} \label{embedSal}$($\cite{Wise12}, Theorem 4.4.$)$ Let $X$ be compact special.  Then there is a Salvetti complex $\Sa_X$ and a local isometry $f_X:X\rightarrow \Sa_X$.  
\end{thm}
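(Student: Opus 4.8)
The plan is to construct $\Sa_X$ explicitly as the Salvetti complex of the \emph{crossing graph} $\Gamma = \Gamma(X)$: its vertex set is the (finite, since $X$ is compact) set of hyperplanes of $X$, with an edge joining $H$ and $H'$ whenever $H$ and $H'$ cross somewhere in $X$. Because $X$ has no self-intersecting hyperplanes (condition (2) of specialness), $\Gamma$ is a simple graph, so $A_\Gamma$ and $\Sa_\Gamma =: \Sa_X$ are defined.

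First I would build the map $f_X\colon X \to \Sa_X$ cube by cube. Since $X$ has no one-sided hyperplanes (condition (1)), each hyperplane $H$ is two-sided, so one can fix a transverse orientation for each, which consistently orients every $1$-cube dual to $H$. Send every vertex of $X$ to the unique vertex of $\Sa_X$, and send an oriented $1$-cube dual to $H$ to the oriented loop indexed by $H$ in $\Sa_X^{(1)}$; this is well defined on $X^{(1)}$. To extend over a $k$-cube $C$ of $X$, note that the $k$ hyperplanes dual to the edges of $C$ are pairwise distinct (again condition (2)) and pairwise crossing inside $C$, hence span a $k$-clique in $\Gamma$, so $\Sa_X$ contains a corresponding $k$-cube; parallel edges of $C$ carry the same label and orientation, so $\partial C$ maps onto the boundary of that $k$-cube and $f_X$ extends over $C$. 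Carrying this out compatibly yields a cubical map $f_X\colon X\to \Sa_X$.

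Next I would verify that $f_X$ is a local isometry. For a vertex $x$, a vertex of $\lk(x)$ is a half-edge at $x$, which $f_X$ sends to the corresponding ``signed hyperplane'' $(H,\pm)$; if two distinct half-edges at $x$ had the same image, they would be two distinct edges dual to the same hyperplane $H$ pointing the same way out of $x$, a direct self-osculation, which is forbidden by condition (3). Hence $f_X|_{\lk(x)}$ is injective on vertices, and since a simplex of a flag complex is determined by its vertex set, $f_X$ is injective on $\lk(x)$, giving condition (1) of the definition of local isometry. For condition (2), suppose $u,v \in \lk(x)^{(0)}$ with $f_X(u), f_X(v)$ adjacent in $\lk(f_X(x))$. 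Adjacency there forces the underlying hyperplanes $H_u \neq H_v$ (a hyperplane does not cross itself, so $(H,\epsilon)$ and $(H,\epsilon')$ are never adjacent in the link of a Salvetti complex) and forces $H_u, H_v$ to cross somewhere in $X$; since $u,v$ are half-edges dual to $H_u, H_v$ at the common vertex $x$, the absence of interosculating hyperplanes (condition (4)) forces $H_u$ and $H_v$ to cross at a square containing $x$, i.e.\ $u$ and $v$ are adjacent in $\lk(x)$. This completes the check, and $\Sa_X$ is itself NPC by the standard computation that the link of its vertex is a flag complex.

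The main obstacle is the careful bookkeeping in the local-isometry verification: one must track orientations of edges and the induced signed-hyperplane structure on vertex links, and invoke each of the four specialness conditions at precisely the right point --- one-sidedness to make edge labels well defined, no self-intersection to send cubes to cubes, no self-osculation for injectivity on links, and no interosculation for the full-subcomplex condition on links. Everything else (finiteness of $\Gamma$, that the target is a genuine Salvetti complex, that $f_X$ is cellular) is routine once the crossing graph is set up.
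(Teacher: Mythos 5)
The paper does not prove this theorem---it cites \cite{Wise12}, Theorem 4.4, and after the corollary merely sketches the construction of $\Sa_X$ as the Salvetti complex of the crossing graph $\Gamma(X)$. Your proposal correctly reconstructs exactly that standard argument: build $\Sa_X$ from the crossing graph, extend the obvious vertex-and-edge assignment over higher cubes via cliques, and verify the two local-isometry conditions by invoking the four specialness hypotheses in precisely the places you name (two-sidedness for well-defined labels, no self-intersections for the cube extension, no direct self-osculation for injectivity on links, no interosculation for the full-subcomplex condition), so it matches both the paper's indicated construction and the proof in \cite{Wise12}.
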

\begin{corollary} Every compact special group is the subgroup of a raag.  
\end{corollary}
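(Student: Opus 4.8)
The plan is to deduce this immediately by combining the two results just stated: Theorem \ref{embedSal} provides a local isometry into a Salvetti complex, and Lemma \ref{localIsom} converts a local isometry into a $\pi_1$-injection. So the proof is essentially a one-line chase once the ingredients are lined up.

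Concretely, I would argue as follows. Let $G$ be compact special, so by definition there is a compact special cube complex $X$ with $\pi_1(X)\cong G$. Apply Theorem \ref{embedSal} to obtain a Salvetti complex $\Sa_X$ and a local isometry $f_X\colon X\rightarrow \Sa_X$. By Lemma \ref{localIsom}, the induced map $(f_X)_*\colon \pi_1(X)\hookrightarrow \pi_1(\Sa_X)$ is injective. Finally, one identifies $\pi_1(\Sa_X)$ with a right-angled Artin group: if $\Gamma$ is the defining graph of $\Sa_X$, then a standard computation with the cell structure of the Salvetti complex (its $1$-skeleton is a wedge of circles, giving the generators $v_1,\dots,v_n$, and each square glued along $v_iv_jv_i^{-1}v_j^{-1}$ imposes exactly the relation $[v_i,v_j]$, while the higher cubes attach along already-null-homotopic loops and so contribute no new relations) shows $\pi_1(\Sa_X)\cong A_\Gamma$. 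Composing, $G\cong \pi_1(X)$ is isomorphic to a subgroup of $A_\Gamma$.

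There is essentially no obstacle here: the content is entirely in Theorem \ref{embedSal}, which is quoted from \cite{Wise12}. The only point that requires any verification is the identification $\pi_1(\Sa_\Gamma)\cong A_\Gamma$, and this is immediate from the explicit cell structure of $\Sa_\Gamma$ given above, or may simply be cited as well known.

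\begin{proof}
Let $G$ be compact special and write $G\cong\pi_1(X)$ for a compact special cube complex $X$. By Theorem \ref{embedSal} there is a Salvetti complex $\Sa_X$ and a local isometry $f_X\colon X\rightarrow\Sa_X$, and by Lemma \ref{localIsom} the map $(f_X)_*\colon\pi_1(X)\hookrightarrow\pi_1(\Sa_X)$ is injective. Since $\pi_1(\Sa_\Gamma)\cong A_\Gamma$ for the defining graph $\Gamma$ of $\Sa_X$, the group $G$ embeds in the right-angled Artin group $A_\Gamma$.
\end{proof}
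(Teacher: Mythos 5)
Your argument is correct and is precisely the paper's proof: Theorem \ref{embedSal} produces the local isometry $f_X\colon X\to\Sa_X$, Lemma \ref{localIsom} converts it into an injection on $\pi_1$, and $\pi_1(\Sa_X)$ is the corresponding raag by construction. You have only written out the one-line deduction the paper leaves implicit.
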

The corollary follows directly from Lemma \ref{localIsom} above.  The Salvetti complex $\Sa_X$ arises from the raag with defining graph $\Gamma(X)$ equal to the \emph{crossing graph} of $X$:  the vertices of $\Gamma(X)$ are in bijection with the hyperplanes of $X$, and there is an edge between two vertices if their corresponding hyperplanes cross.  

%
%
%
%
\subsection{Collapsing separating hyperplanes}

\begin{definition}
A hyperplane $H$ is \textbf{separating} if $X\setminus H$ has more than one connected component. Otherwise, $H$ is \textbf{non-separating}. 
\end{definition}
%
%
If $H$ is separating then $\overl{N_1}(H)\cong H\times [-1,1]$.  We will now describe a way of collapsing $X$ along these product neighborhoods to obtain a NPC cube complex with no separating hyperplanes. We first learned of the technique of collapsing hyperplanes in \cite{CSV12} and then adapted it to our setting.  
\begin{definition} Let $H\subset X$ be a separating hyperplane, and $\overl{N_1}(H)$ be its closed unit neighborhood.  We define the \textbf{collapse $X/H$ of $X$ along $H$ }to be the cube complex obtained by the identification $\overl{X}_H=X/\{(x,t)\sim (x,s)\}$ where $(x,t), (x,s) \in N_1(H)\cong H\times [-1,1]$. Let $\pi:X\rightarrow X/H$ denote the quotient map.
\end{definition}

\begin{proposition}\label{collapse} If $X$ is special and $H$ is separating, then the collapse $X/H$ is special. 

\end{proposition}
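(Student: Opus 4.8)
The plan is to verify directly that $X/H$ satisfies the definition of a special cube complex: that it is NPC, and that it has no one-sided, self-intersecting, (directly) self-osculating, or interosculating hyperplanes. The starting point is the cell structure. Since $X/H$ is a cellular quotient of $X$, every cube of $X/H$ is the image under $\pi$ of a cube of $X$; and as $\overline{N_1}(H)\cong H\times[-1,1]$ is an embedded subcomplex, a cube of $X$ of the form $c\times[-1,1]$ with $c$ a cube of $H$ collapses to $c$, while every other cube of $X$ maps homeomorphically onto a cube of the same dimension, and $c\times\{-1\}$ and $c\times\{+1\}$ receive the same image for each cube $c$ of $H$. From this one reads off that the hyperplanes of $X/H$ are in bijection with the hyperplanes of $X$ other than $H$: each $K\neq H$ yields a hyperplane $\overline K:=\pi(K)$; distinct hyperplanes give distinct images (using that $X$ has no self-intersecting hyperplanes, so each edge is dual to a unique hyperplane, while the identification $c\times\{-1\}\sim c\times\{+1\}$ respects duality); and every hyperplane of $X/H$ arises this way, since every edge of $X/H$ lifts to an edge of $X$ not dual to $H$. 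I would also record that $\overline{K_1}$ and $\overline{K_2}$ cross in $X/H$ if and only if $K_1$ and $K_2$ cross in $X$: no new squares appear in the quotient, and a square of $X/H$ witnessing a crossing is the image of a square of $X$ (necessarily one containing no edge dual to $H$) witnessing the corresponding crossing of $K_1$ and $K_2$.

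For the NPC condition I would compute vertex links. A vertex $w\notin\overline{N_1}(H)$ has $\lk(\overline w,X/H)\cong\lk(w,X)$, while the two endpoints $p,q$ of an edge $e$ dual to $H$ are identified to a single vertex $\overline v$ of $X/H$; writing $\alpha\in\lk(p,X)$ and $\beta\in\lk(q,X)$ for the vertices corresponding to $e$, one has $\lk(\alpha,\lk(p,X))\cong\lk(v,H)\cong\lk(\beta,\lk(q,X))$, and $\lk(\overline v,X/H)$ is the union of the full subcomplex of $\lk(p,X)$ on the vertices other than $\alpha$ and the full subcomplex of $\lk(q,X)$ on the vertices other than $\beta$, glued along their common copy of $\lk(v,H)$. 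Deleting a vertex from a flag complex leaves a flag complex; $\lk(v,H)$ is a full subcomplex of each of the two pieces; and no edge of $X/H$ joins a vertex appearing only in the $p$-piece to one appearing only in the $q$-piece. Hence every clique of the glued complex lies in a single piece, so the glued complex is flag, and $X/H$ is NPC.

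It remains to rule out the hyperplane pathologies, and here the hypothesis that $H$ is separating is essential. No $\overline K$ is one-sided or self-intersecting: the collapse only identifies cells along the $H$-fibre direction, which is tangent to $K$ wherever $K$ meets $\overline{N_1}(H)$, so the local product structure $N_\epsilon(K)\cong K\times[-\epsilon,\epsilon]$ descends to one for $\overline K$, and a self-intersecting square for $\overline K$ would lift to one for $K$. For osculations, suppose dual edges $\overline{e_1}$ of $\overline{K_1}$ and $\overline{e_2}$ of $\overline{K_2}$ share a vertex of $X/H$ but do not span a square there. If that vertex is $\overline w$ with $w\notin\overline{N_1}(H)$, or if $\overline{e_1},\overline{e_2}$ admit lifts to edges of $X$ sharing a vertex, then by the link computation $K_1$ and $K_2$ already osculate in $X$. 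Otherwise the shared vertex is some $\overline v$ as above, $\overline{e_1}$ lifts to an edge $e_1$ at $p$ and $\overline{e_2}$ to an edge $e_2$ at $q$, and, inspecting the link description, neither $K_1$ nor $K_2$ crosses $H$. But a hyperplane other than $H$ that does not cross $H$ is disjoint from $H$, hence contained in a single component of $X\setminus H$; since $e_1$ and $e_2$ lie on opposite sides of the separating hyperplane $H$, the hyperplanes $K_1$ and $K_2$ lie in distinct components of $X\setminus H$, so $K_1\cap K_2=\varnothing$. In particular $K_1\neq K_2$, so this new osculation is not a self-osculation; and $K_1,K_2$ do not cross, so by the crossing correspondence $\overline{K_1}$ and $\overline{K_2}$ do not cross in $X/H$, so it cannot be part of an interosculation. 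Finally, any osculation of $\overline{K_1},\overline{K_2}$ already present in $X$, together with the crossing correspondence and the fact that $\pi$ preserves the coherent orientations of hyperplanes (so that direct self-osculations lift to direct ones), would produce the same pathology in $X$, contradicting that $X$ is special. This exhausts all cases, so $X/H$ is special.

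I expect the main obstacle to be the link computation at the collapsed vertices together with the bookkeeping of which osculations are genuinely new. It is exactly at this point that separateness of $H$ is used: it confines a hyperplane disjoint from $H$ to one side of $H$, and thereby prevents two such hyperplanes from coinciding or from crossing, which is what eliminates the only potentially new self-osculations and interosculations.
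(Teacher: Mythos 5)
Your proof is correct, and it follows the same broad strategy as the paper (check the Gromov link condition, then rule out each of the four hyperplane pathologies), but you organize the osculation arguments in a genuinely different and arguably cleaner way. The paper's treatment of self-osculation and interosculation is a casework analysis of how the hyperplanes $K_1,K_2$ sit relative to $H$ (neither crosses $H$ / exactly one crosses $H$ / both cross $H$), deriving a contradiction with separateness in each subcase. You instead front-load the setup — establishing the cell structure of $X/H$, the bijection between hyperplanes of $X/H$ and hyperplanes of $X$ other than $H$, and the crossing correspondence — and then show in one stroke that a \emph{new} osculation at a collapsed vertex can only involve two hyperplanes that both avoid $H$; separateness of $H$ then confines them to opposite sides of $H$, so they are disjoint and non-crossing, and the new osculation is neither a direct self-osculation nor an interosculation and hence is not a pathology. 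This isolates the use of separateness to a single clean step and avoids the paper's three-way casework. One point worth spelling out a bit more: before invoking flagness of the glued link, one should note that the quotient link is actually a simplicial complex (no loops or double edges) — the paper flags this explicitly (``no monogons because $H$ does not self-intersect, no bigons because $X$ is NPC''), whereas in your write-up it is only implicit in the careful description of which cells get identified; and the one-line claim that the $H$-fibre direction is ``tangent to $K$'' is doing real work (it is where no-self-intersection of $H$ enters) and deserves a sentence of justification.
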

\begin{proof} 

The fact $\overl{N_1}(H)\cong H\times[-1,1]$ implies that $X/H$ has a cube complex structure.  First we check that $X/H$ is NPC.  For this it suffices to check the Gromov link condition at each vertex.  If a vertex does not meet $\overl{N_1}(H)$ then its link passes isometrically to the quotient, hence the link condition is still satisfied.  If a vertex $v_0$ lies in $\overl{N_1}(H)$, then in the quotient $v_0$ is identified with exactly one other vertex $v_1$ which lies at the other end of an edge dual to $H$. Call $e$ the edge joining $v_0$ and $v_1$.

Denote the link of $v_0$ by $lk(v_0)$, and the full subcomplex of $lk(v_0)$ generated by cubes other than $e$ which meet $\overl{N_1}(H)$ by  $lk_H(v_0)$.  Finally denote the full subcomplex generated by cubes in $lk(v_0)$ other than $e$ by $lk_e(v_0)$.  We similarly obtain complexes $lk(v_1)$, $lk_H(v_1)$ and $lk_e(v_1)$. Note that $lk_H(v_i)$ are exactly the edges in link of $v_i$ which lie on the boundary of a cube containing $e$, for $i=0,1$.  If $m$ is the midpoint of $e$, then $m$ is a vertex of $H$ and $lk(m) \cong lk_H(v_0)\cong lk_H(v_1)$.  Since $H$ is NPC, $k_H(v_i)$ is a flag simplicial complex, and hence a full subcomplex of $lk(v_i)$ and $lk_e(v_i)$.  There are no monogons in the quotient because $H$ does not self-intersect, and there are no bigons because $X$ is NPC.  Thus, in the quotient the link of the vertex corresponding to the equivalence class of $v_0,v_1$ can be described as $lk_e(v_0)\coprod lk_e(v_1)$ identified along $lk_H(v_0)\cong lk_H(v_1)$.  This is flag because it is made from two flag complexes glued along a full subcomplex.\\

\noindent
\textbf{No one-sided hyperplanes:} Let $\pm [e_H]$ denote the equivalence class of parallel edges in $X$ which are dual to $H$.  Then in the quotient this class vanishes, and all other classes are preserved. Suppose the $e||-e$ in $X/H$.  Then $e,-e$ are dual to some hyperplane $K$ and there is a path $\gamma$ between the endpoints of $e$ lying entirely within $N_1(K)\setminus K$. Let $H_0$ be the image of $H$ under the collapse $\pi:X\rightarrow X/H$.  Any path which meets $H_0$ has a lift to $X$, since the pre-image of any segment $I$ lying in $H_0$ is a rectangle $I\times[-1,1]$.  Thus, $\pi^{-1}(K)$ is not 2-sided.  \\

\noindent
\textbf{No self-intersection:} Suppose $K$ is a hyperplane in $X$ which intersects itself in $X/H$. Then there are two squares in $X$ with edges $ e, e'$ dual to $K$ connected by an edge $e_0\in [e_H]$. Then these two squares are opposite faces of a cube $C$ containing $e_0$ as a dual edge, and $e,e'$ extend to $C$ to intersect in $C$.  Thus $K$ intersected itself in $X$.  \\

\noindent
\textbf{No self-osculation:} Suppose $K$ is a hyperplane in $X$ which self-osculates in $X/H$. Then there are two edges $e,e'$ in $X$ dual to $K$, and lying on opposite sides of an edge $e_0$ dual to $H$.  If $H$ and $K$ do not intersect, then $H$ is not separating.  If they do intersect, then the fact that $X$ is special implies that they meet in a square in $X$.  Since it is not possible for $K$ to self-intersect or self-osculate, there is a single square bounded on parallel sides by edges dual to $K$ and on the other by $H$.  It follows that after collapsing $H$, $K$ does not self-osculate in the quotient. \\

\noindent
\textbf{No interosculation:} Suppose $K_1$ and $K_2$ are hyperplanes of $X$ which interosculate in the quotient.  Note that as in the case of no-self-intersection, it is not possible for $K_1$ and $K_2$ to intersect in the quotient if they did not in $X$.  Thus, $K_1$ and $K_2$ cross in $X$, and there are a pair of edges $e_1$ and $e_2$, dual to $K_1$ and $K_2$ respectively, which lie at either ends of an edge $e_0$ dual to $H$.   There are three cases, depending on whether or not $K_1$ and $K_2$  intersect or osculate $H$ in $X$.  If both $K_1$ and $K_2$ osculate $H$, then $H$ does not separate.  If exactly one of $K_1$ and $K_2$ intersects $H$, say $K_1$, then $K_1$ and $H$ cross in a square with boundary $e_0$ and $e_1$.  Then $K_1$ and $K_2$ interosculate in $X$, or they cross in a square at the other end of $e_0$.  It follows that under the collapse, no interosculation occurs.  In the case where all three intersect, then in $X$ there is a 3-cube containing $e_0$, $e_1$ and $e_2$ and hence $\pi(K_1)$ and $\pi(K_2)$ cross in $X/H$.\\

\noindent
Since none of the four hyperplane pathologies can occur in the quotient, $X/H$ is NPC and special.  
\end{proof}

\begin{rmk}\label{StillSep} Note that if $K$ and $H$ separate $X$ then $\pi(K)$ still separates in $X/H$.
\end{rmk}

\begin{definition} A special cube complex $X$ is called \textbf{irreducible} if it has no separating hyperplanes.  Otherwise $X$ is \textbf{reducible}.  

\end{definition}

\begin{proposition} \label{Irreducible}Every compact special cube complex $X$ is homotopy equivalent to an irreducible compact special cube complex.   
\end{proposition}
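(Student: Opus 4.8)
The plan is to proceed by induction on the number of separating hyperplanes of $X$, using the collapsing operation of Proposition \ref{collapse} as the inductive step. If $X$ has no separating hyperplanes, then $X$ is already irreducible and there is nothing to prove. Otherwise, pick a separating hyperplane $H\subset X$, and form the collapse $X/H$. By Proposition \ref{collapse}, $X/H$ is again compact and special. The key claim is that the quotient map $\pi\colon X\to X/H$ is a homotopy equivalence. Granting this, Remark \ref{StillSep} tells us that every hyperplane of $X$ that was separating and distinct from $H$ maps to a separating hyperplane of $X/H$, so the collapse does not create new separating hyperplanes; meanwhile the class of edges dual to $H$ has been killed, so the number of separating hyperplanes has strictly decreased. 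One subtle point to check here is that $X/H$ genuinely has fewer separating hyperplanes than $X$ — one must rule out the possibility that some previously non-separating hyperplane becomes separating after the collapse. Since $\pi$ is $\pi_1$-surjective with the dual edge class $[e_H]$ lying in the kernel only up to the relations already present, a short argument via the Mayer--Vietoris/van Kampen decomposition of $X$ along $H$ shows no new separating hyperplanes appear. Iterating, after finitely many collapses we reach an irreducible compact special cube complex, and the composition of the collapse maps is a homotopy equivalence.

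The heart of the argument, and the step I expect to be the main obstacle, is showing that $\pi\colon X\to X/H$ is a homotopy equivalence. The idea is that collapsing the product neighborhood $\overline{N_1}(H)\cong H\times[-1,1]$ onto $H\times\{0\}$ is, up to homotopy, just collapsing a mapping cylinder. More precisely, write $X = A \cup_{\overline{N_1}(H)} B$ where $A, B$ are the closures of the two components of $X\setminus H$ (these exist and are distinct precisely because $H$ separates), and $A\cap B = \overline{N_1}(H)\simeq H$. The collapse $X/H$ is then $A' \cup_H B'$ where $A' = A/(H\times[-1,0])$ and $B' = B/(H\times[0,1])$, and each of $A\to A'$, $B\to B'$ is a deformation retraction of a collar. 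By the gluing lemma for homotopy equivalences (or directly: deformation retract the collars simultaneously, which is compatible along $H$), $\pi$ is a homotopy equivalence. One has to be slightly careful that the cube-complex structure and the quotient are compatible — but this is exactly what the product structure $\overline{N_1}(H)\cong H\times[-1,1]$, guaranteed for separating hyperplanes, provides, and it is implicitly used already in the proof of Proposition \ref{collapse}.

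Finally, to make the induction terminate cleanly, I would set up the well-ordering explicitly: let $s(X)$ be the number of separating hyperplanes of $X$, a finite number since $X$ is compact. If $s(X) = 0$, done. If $s(X) > 0$, apply the collapse to some separating $H$ to obtain $X/H$ with $s(X/H) < s(X)$ and $X/H \simeq X$, compact special; now induct. The result is an irreducible compact special cube complex homotopy equivalent to $X$, as desired. \QED
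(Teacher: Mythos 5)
Your proof is correct, and the iterative scaffolding (induct on the finite number of separating hyperplanes, collapse one at a time, track the invariant) matches the paper's, but the key step — that $\pi\colon X\to X/H$ is a homotopy equivalence — is argued by a genuinely different route. The paper proceeds abstractly: van Kampen's theorem shows $\pi$ induces an isomorphism on $\pi_1$, and since $X$ and $X/H$ are both NPC (hence aspherical, hence $K(\pi,1)$'s), an isomorphism on $\pi_1$ already forces a homotopy equivalence. You instead decompose $X$ along $H$ and invoke the gluing lemma for homotopy equivalences after observing that the quotient collapses a collar on each side. Your argument is more hands-on and produces an explicit homotopy inverse without appealing to asphericity, so it would apply to any cube complex in which $\overline{N_1}(H)$ is an embedded product; the paper's argument is shorter, since asphericity is freely available here and lets one verify only $\pi_1$.

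Two small points to tighten. If $A$ and $B$ are the closures of the two components of $X\setminus H$, then $A\cap B=H$, not $\overline{N_1}(H)$; the gluing-lemma argument goes through once this is corrected, since $H\hookrightarrow A$ is a cofibration and each of $A\to A'$, $B\to B'$, $H\to H$ is a homotopy equivalence, with the pushout of $A'\leftarrow H\rightarrow B'$ being exactly $X/H$. Also, your concern that new separating hyperplanes might appear is a legitimate one — the paper's one-line proof silently elides it — but the resolution is more elementary than a Mayer--Vietoris argument: if $K\ne H$ is non-separating, there is a loop $\gamma$ in $X^{(1)}$ crossing $K$ geometrically exactly once, and $\pi(\gamma)$ is a loop in $(X/H)^{(1)}$ that still crosses $\pi(K)$ exactly once (the edges of $\gamma$ dual to $H$ are collapsed to vertices while the single edge dual to $K$ survives), so $\pi(K)$ remains non-separating.
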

\begin{proof} 
An easy application of van-Kampen's theorem shows that collapsing separating hyperplanes in Proposition \ref{collapse} induces an isomorphism on $\pi_1$.  Since both $X$ and the quotient are NPC, they are each $K(\pi_1,1)$'s, hence homotopy equivalent. By compactness, there are only finitely many separating hyperplanes, and by Remark \ref{StillSep}, we can collapse them in order.  
\end{proof}

\subsection{The cohomology group $H^1(X)$}


Let $X$ be an NPC cube complex and suppose that every hyperplane is embedded and two-sided.  If $H$ is non-separating, then $H$ defines a surjection $\phi_H\colon \pi_1(X)\rightarrow \Z$ as follows.  First, choose an orientation on 1-cubes dual to $H$.  This is possible since $H$ is two-sided.  For each 1-cube $e\in X$ define $\widetilde{\phi_H}(e)$ to be the signed intersection of $e$ with $H$ and extend to 1-chains $C_1(X)$ by linearity. To see that $\widetilde{\phi_H}$ is a cocycle, observe that the signed sum around any square, again by two-sidedness of $H$, is 0. Since $H$ is non-separating, there exists a cycle in $X^{(1)}$ which meets $H$ exactly once with positive orientation. For any $\alpha\in H_1(X)$, we define the intersection product $\alpha.H=\phi_H(\alpha)$. 

Combining the above observation with Proposition \ref{collapse} we can characterize exactly when a special cube complex is CAT(0):
\begin{corollary} \label{CharCAT}Suppose $X$ is connected and special.  Then the following are equivalent\begin{enumerate}
\item$X$ is CAT(0).
\item $H_1(X)=0$. 
\item Every hyperplane is separating.
\end{enumerate}

\end{corollary}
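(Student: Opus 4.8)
The plan is to prove the three equivalences by a cycle of implications $(1)\Rightarrow(2)\Rightarrow(3)\Rightarrow(1)$, leaning on Proposition~\ref{collapse} for the last step and on the cocycle $\phi_H$ constructed above for the middle one. The implication $(1)\Rightarrow(2)$ is the easy direction: if $X$ is CAT(0) then $\pi_1(X)$ is trivial, so $H_1(X)=0$. (Alternatively, one can run the argument entirely at the level of hyperplanes and never mention $\pi_1$ until the end.)

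For $(2)\Rightarrow(3)$ I would argue by contraposition. Suppose some hyperplane $H$ is non-separating. Since $X$ is special, $H$ is embedded and two-sided, so the discussion preceding the corollary produces a cocycle $\widetilde{\phi_H}$ on $C_1(X)$ and a class $\phi_H\in H^1(X)$. Non-separating means there is a loop $\gamma$ in $X^{(1)}$ crossing $H$ exactly once (with sign $+1$), so $\phi_H([\gamma])=1$; hence $\phi_H\neq 0$ in $H^1(X;\Z)$, so $H^1(X;\Z)\neq 0$, so $H_1(X)\neq 0$ by the universal coefficient theorem (a nonzero free or torsion summand of $H_1$ is detected by $H^1$ or $H^2$, but here we directly get a surjection $H_1(X)\to\Z$, so $H_1(X)\neq 0$). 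Thus if $H_1(X)=0$ then every hyperplane separates.

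For $(3)\Rightarrow(1)$ I would invoke Proposition~\ref{Irreducible} and its proof: if every hyperplane of $X$ separates, then iteratively collapsing separating hyperplanes — which by Proposition~\ref{collapse} and Remark~\ref{StillSep} stays special and keeps the remaining hyperplanes separating — and using finiteness (here I would note that even though the corollary is stated for $X$ merely connected and special, in the compact case this terminates; in general one observes that collapsing strictly decreases the number of hyperplanes) eventually yields a special cube complex with \emph{no} hyperplanes at all. A cube complex with no hyperplanes has no $1$-cubes, hence is a single point (each component is), so it is CAT(0); since collapsing is a homotopy equivalence inducing an isomorphism on $\pi_1$ (van Kampen, as in Proposition~\ref{Irreducible}), $\pi_1(X)$ is trivial, and as $X$ is NPC it is aspherical, hence contractible, hence its universal cover equals $X$ and is CAT(0). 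So $X$ is CAT(0).

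The main obstacle is the step $(3)\Rightarrow(1)$, specifically the bookkeeping that collapsing terminates and that the end product genuinely has no hyperplanes: one must check that collapsing a separating hyperplane reduces the hyperplane count by exactly one (the collapsed hyperplane disappears and, by Remark~\ref{StillSep} together with the pathology-free conclusion of Proposition~\ref{collapse}, no new non-separating hyperplanes are created and no two distinct hyperplanes are merged), and that ``no hyperplanes'' forces the complex to be $0$-dimensional. A secondary subtlety is making $(2)\Rightarrow(3)$ clean over $\Z$ versus over a field: working with $\phi_H\colon\pi_1(X)\to\Z$ directly sidesteps any torsion worries, since a surjection onto $\Z$ immediately gives $H_1(X;\Z)\neq 0$.
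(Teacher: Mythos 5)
Your cycle of implications and the arguments for $(1)\Rightarrow(2)$ and $(2)\Rightarrow(3)$ match the paper's proof; working directly with the surjection $\phi_H\colon \pi_1(X)\to\Z$ is the right way to avoid UCT headaches and is what the paper intends.

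The gap is in $(3)\Rightarrow(1)$, and it is exactly the one you gesture at parenthetically without resolving. The corollary is stated for $X$ connected and special, with no compactness hypothesis, so $X$ may have infinitely many hyperplanes (indeed, any infinite CAT(0) cube complex does). ``Collapsing strictly decreases the number of hyperplanes'' proves termination only when that number starts finite; with infinitely many hyperplanes your iteration never reaches the hyperplane-free complex that your argument needs to land on, and the phrase ``eventually yields a special cube complex with no hyperplanes at all'' is unjustified. The paper instead localizes: any loop $\gamma$ in $X^{(1)}$ is a compact set, hence crosses only finitely many hyperplanes, all of which separate by hypothesis. Collapse just those finitely many (Proposition~\ref{collapse} and Remark~\ref{StillSep} allow this iteratively); each collapse is a homotopy equivalence, and the image of $\gamma$ in the resulting complex no longer crosses any hyperplane, hence is a constant loop in the $1$-skeleton. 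Therefore $\gamma$ was nullhomotopic, $\pi_1(X)=1$, and since $X$ is NPC it is its own universal cover and thus CAT(0). In short: collapse per loop, not globally. With that local version of your Step $(3)\Rightarrow(1)$, the rest of your proof is sound.
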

\begin{proof} If $X$ is CAT(0) then $\pi_1(X)$ is trivial and hence $H_1(X)$ is as well.  If $X$ has a non-separating hyperplane then by the observation $H^1(X)$ is non-trivial. Finally, suppose every hyperplane is separating. Any compact subset $K\subset X$  is contained in the closed unit neighborhoods of only finitely many hyperplanes hence Proposition \ref{collapse} and Remark \ref{StillSep} imply that $K$ can be collapsed to a point.  In particular, $\pi_1(X)$ is trivial and hence $X$ is CAT(0).
\end{proof}
As a final corollary, we have the following curious observation about quasiconvex hierarchies for hyperbolic special groups. 
\begin{corollary} If $G$ is $\delta$-hyperbolic and $G=\pi_1(X)$ for some compact special cube complex, then $G$ has a quasiconvex hierarchy consisting only of HNN-extensions.
\end{corollary}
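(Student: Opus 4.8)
The plan is to induct on the number of cubes of a compact special cube complex carrying $G$. Precisely, I would prove the following by induction on $n$: if $X$ is a compact special cube complex with $n$ cubes and $\pi_1(X)$ is $\delta$-hyperbolic, then $\pi_1(X)$ admits a quasiconvex hierarchy all of whose splittings are HNN-extensions. The base case $n=1$ is immediate, since then $X$ is a single vertex and $\pi_1(X)$ is trivial. The point of the induction is that the two operations available to us — collapsing a separating hyperplane and cutting along a non-separating one — both strictly decrease the cube count, and the first never introduces an amalgamated product while the second always produces an HNN-extension.

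For $n>1$ the complex $X$ has an edge, hence a hyperplane. If some hyperplane $H$ is separating, I would replace $X$ by the collapse $X/H$: it is again compact special by Proposition \ref{collapse}, has the same fundamental group $G$ (van Kampen, as in the proof of Proposition \ref{Irreducible}), and has strictly fewer cubes than $X$, since collapsing $\overline{N_1}(H)\cong H\times[-1,1]$ onto $H$ destroys $2\cdot\#\{\text{cubes of }H\}\geq 2$ cubes; the inductive hypothesis applied to $X/H$ then finishes this case without introducing any splitting of $G$. Hence I may assume $X$ is irreducible, and I fix a (necessarily non-separating) hyperplane $H$. Cutting $X$ open along $H$ exhibits $X$ as $\overline{N_1}(H)\cong H\times[-1,1]$ glued along the two disjoint copies $\iota^{\pm}(H)$ of $H$ to the closed subcomplex $Y:=X\setminus N_1(H)$, which is connected because $H$ is non-separating, is compact special by $(\cite{HaWi08}$, Corollary 3.9$)$, and has strictly fewer cubes than $X$ (we have deleted the $\#\{\text{cubes of }H\}\geq 1$ cubes met by $H$). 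Van Kampen's theorem identifies $G$ with the HNN-extension $\pi_1(Y)\ast_{\pi_1(H)}$, stable letter dual to $H$ and edge inclusions induced by $\iota^{\pm}$. Granting that $\pi_1(H)$ and $\pi_1(Y)$ are quasiconvex in $G$, the group $\pi_1(Y)$ is $\delta$-hyperbolic, so the inductive hypothesis supplies a hierarchy of $\pi_1(Y)$ built from HNN-extensions; prepending the single splitting $G\cong\pi_1(Y)\ast_{\pi_1(H)}$ then yields the required hierarchy for $G$, and termination is guaranteed by the strictly decreasing cube count.

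The substance of the argument is thus the two quasiconvexity statements. For $\pi_1(H)$ I would use that the carrier of $H$ is a locally convex subcomplex of $X$, so that $\pi_1(H)\hookrightarrow G$ has convex image in $\widetilde X$ on which $\pi_1(H)$ acts cocompactly (cf.\ Lemma \ref{localIsom}); a convex cocompact subset of a hyperbolic space has quasiconvex stabilizer, so this uses $\delta$-hyperbolicity of $G$ only mildly and is standard for hyperplanes. I expect the quasiconvexity of the vertex group $\pi_1(Y)$ to be the main obstacle: it is the assertion that a vertex group of a splitting of a hyperbolic group over a quasiconvex edge group is again quasiconvex, which is precisely where hyperbolicity of $G$ is genuinely needed. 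One can either invoke the standard result to this effect (e.g.\ Bigdely--Wise), or argue directly that $\pi_1(Y)\hookrightarrow G$ has quasiconvex image using the combinatorial convexity of $X\setminus N_1(H)$ in $X$ from $\cite{HaWi08}$. Everything else — the cube counts, the van Kampen computations, and the fact that the collapse and cut operations preserve compactness and specialness — is routine given the results already in hand.
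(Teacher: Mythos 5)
Your proposal is correct and follows essentially the same route as the paper: split along a non-separating hyperplane $H$ to exhibit an HNN-extension, collapse separating hyperplanes to control $\pi_1$, and use the strictly decreasing cube count for termination. The cosmetic difference is that the paper works with the collared complex $X|H$ and then observes that the newly introduced separating hyperplanes can be collapsed, whereas you pass directly to $Y=X\setminus N_1(H)$; these are the same after the collapse. You are somewhat more careful than the paper in one respect: the paper simply asserts quasiconvexity of $\pi_1(H)$, while you flag both quasiconvexity claims (edge group and vertex group) and correctly identify that they follow from the fact that hyperplane carriers and their complements are locally convex subcomplexes, so lift to convex, cocompactly-stabilized subcomplexes of $\widetilde X$, which is where $\delta$-hyperbolicity enters.
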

\begin{proof} Let $H$ be a non-separating hyperplane.  Then $\pi_1(H)$ is quasiconvex in $G$, and $G \cong \pi_1(X|H)*_{\pi_1(H)}$.  Now collapse separating hyperplanes in $X|H$ and repeat.  Note that $H^+$ and $H^-$ are separating in $X/H$. Eventually we will end up with a complex which only has separating hyperplanes, since the total number of cubes decreases every time we split along hyperplanes and collapse.  
\end{proof}

\section{The genus of a special group}
As we saw in the previous section, each non-separating hyperplane of a special cube complex $X$ contributes a free factor to $H^1(X)$, but in general these free factors may not be distinct.  For example, if $K_1$ and $K_2$ are two disjoint non-separating hyperplanes such that $K_1\cup K_2$ separates $X$, then every homology class which meets $K_1$ also meets $K_2$ and with the same algebraic intersection, hence $\phi_{K_1}=\phi_{K_2}$.  Based on this observation we have the following
\begin{definition} Let $X$ be special.  The \textbf{genus} $g(X)$ is the maximum number of disjoint hyperplanes in $X$ whose union does not separate. If no maximum exists we say $g(X)=\infty$.  If $\Gamma$ is the fundamental group of a special cube complex, we define the genus \[g(\Gamma)=\sup\{g(X)\colon X\mbox{ is special and $\pi_1(X)=\Gamma$}\}.\]
\end{definition}

The definition is motivated by the classical definition genus of compact surface: namely, the largest number of disjoint simple closed curves whose union does not disconnect the surface. The next proposition lists some properties of the genus.  

\begin{proposition} \label{gProps} Let $X$ be a special cube complex (resp. special group). The genus enjoys the following properties:
\begin{enumerate}
\item $g(X)\leq \text{rk}(H_1(X))$. In particular, $g(X)$ is finite whenever $X$ is compact (resp. finitely generated).  
\item $g(X)=0$ if and only if $X$ is CAT(0) (resp. $X=\{1\}$).

\end{enumerate} 
\end{proposition}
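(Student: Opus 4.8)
The plan is to establish the two parts by directly relating disjoint non-separating families of hyperplanes to the cohomology $H^1(X)$ and to the results of Section 2.

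For part (1), I would argue that a collection $H_1,\dots,H_n$ of pairwise disjoint hyperplanes whose union does not separate $X$ gives rise to $n$ linearly independent classes in $H^1(X;\Z)$, hence $n\le \mathrm{rk}\,H_1(X)$. The key point is the intersection pairing discussed in \S2.4: each $H_i$ is two-sided (since $X$ is special), so after choosing orientations on the dual $1$-cubes we obtain cocycles $\widetilde{\phi_{H_i}}$ and hence classes $[\phi_{H_i}]\in H^1(X)$. To see linear independence, I would use the hypothesis that the union does not separate in the following refined form: for each index $i$, the space $X$ split along the remaining hyperplanes $\bigcup_{j\ne i} H_j$ is still connected and still contains $H_i$ as a non-separating hyperplane of the cut-open complex, so there is a combinatorial loop $\gamma_i$ in $X$ crossing $H_i$ exactly once (with sign $+1$) and crossing each $H_j$, $j\ne i$, zero times. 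Then the matrix $(\gamma_i . H_j)$ is the identity, which forces the $[\phi_{H_i}]$ to be independent. Finiteness for compact $X$ is then immediate since $H_1(X)$ is finitely generated; for a finitely generated special group $\Gamma$, $H_1(\Gamma)$ has finite rank $r$, and every special $X$ with $\pi_1(X)=\Gamma$ satisfies $g(X)\le r$, so $g(\Gamma)\le r<\infty$.

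For part (2), one direction is trivial: if $X$ is CAT(0) then by Corollary \ref{CharCAT} every hyperplane is separating, and moreover any nonempty subcollection of hyperplanes separates (iteratively collapsing, as in the proof of that corollary, collapses $X$ to a point), so there is no nonempty non-separating family and $g(X)=0$; on the group level, CAT(0) means $\pi_1(X)=\{1\}$. Conversely, suppose $g(X)=0$, i.e.\ \emph{every} single hyperplane of $X$ is separating (a fortiori, since a single non-separating hyperplane would give genus $\ge 1$). Then by Corollary \ref{CharCAT}, $X$ is CAT(0). For the group version, $g(\Gamma)=0$ means $g(X)=0$ for every special $X$ with $\pi_1(X)=\Gamma$; taking any such $X$ we conclude $X$ is CAT(0), so $\Gamma=\pi_1(X)$ is trivial.

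\textbf{Main obstacle.} The one genuinely nontrivial step is the production of the dual loops $\gamma_i$ realizing the identity intersection matrix, i.e.\ the claim that if $\bigcup_i H_i$ does not separate $X$ then for each $i$ there is a loop meeting $H_i$ once and the other $H_j$ not at all. The natural approach is to work in the cut-open complex $X \,|\, (\bigcup_{j\ne i}H_j)$: non-separation of the full union should be equivalent to the statement that $H_i$ is non-separating in this cut-open complex, and then \S2.4 applied to that complex produces the desired loop, which maps back to a loop in $X$ with the stated intersection numbers. Care is needed to check that cutting along a disjoint family behaves well (the pieces $\overl{N_1}(H_j)\cong H_j\times[-1,1]$ are genuinely product neighborhoods because the $H_j$ are disjoint and two-sided), and that "the union does not separate" is insensitive to the order of cutting — both of which follow from the special hypothesis and the product neighborhood structure, but should be stated explicitly.
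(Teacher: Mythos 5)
Your proof is correct and follows essentially the same route as the paper: for part (1), both arguments produce for each $H_i$ a loop $\gamma_i$ meeting $H_i$ once and the other $H_j$ algebraically zero times, so the intersection matrix is the identity and the cocycles $\phi_{H_i}$ are linearly independent in $H^1(X)$; for part (2), both invoke Corollary~\ref{CharCAT}. Two small simplifications: the cut-open detour is avoidable (connectedness of $X\setminus\bigcup_j H_j$ gives a path from one side of $H_i$ to the other missing all $H_j$, and closing it by a dual arc across $H_i$ yields $\gamma_i$ directly), and the iterative-collapse remark in part (2) is superfluous, since a disjoint family with non-separating union already forces each member to be individually non-separating, so $g(X)=0$ is equivalent to "every hyperplane separates" and Corollary~\ref{CharCAT} applies at once.
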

\begin{proof} Let $X$ be a special cube complex.  If $K_1$ and $K_2$ are disjoint and do not separate, there are homology classes $\gamma_1$ and $\gamma_2$ in $H_1(X)$ such that $K_i. \gamma_j=\delta_{ij}$ for $i,j=1,2$.  Hence the $K_i$ correspond to distinct free factors of $H_1(X)$.  This proves (1).  Property (2) follows directly from Corollary \ref{CharCAT}.  
\end{proof}
\noindent
We calculate the genus of some basic examples of special groups:

\begin{example} $g(F_n)=n$.  Take the standard rose $R_n$ as a cube complex with $\pi_1=F_n$.  Then $g(R_n)=1$ and property (1) implies this is best possible.  In fact, any graph with $\pi_1=F_n$ works.
\end{example}

\begin{example} $g(\Z^n)=1$.  This follows from the fact that if $g(\Gamma)=n$ then $\Gamma$ surjects onto $F_n$.  
\end{example}

\begin{example} $g(\pi_1(\Sigma_g))=g$, where $\Sigma_g$ is the closed surface of genus $g$.  Note that this is not entirely obvious from the definition, since there may well be high-dimensional cube complexes with the same fundamental group as $\Sigma_g$.  We observe, however, that if $K_1,\ldots, K_n$ are disjoint hyperplanes of $X$, then $\phi_{K_i}\cup\phi_{K_j}=0\in H^2(X)$ for all $i,j$.  Hence, the $\phi_{K_i}$ span a Lagrangian subspace of $H^1(X)=H^1(\Sigma_g)$.  The maximal possible dimension of such a subspace is of course $g$, and it is not hard to construct explicit 2-D cube complex structures on $\Sigma_g$ which realize this maximum. 
\end{example}

\begin{definition} Hyperplanes $K$ and $L$ are called \textbf{parallel} if $K$ and $L$ do not meet.  
\end{definition}

\begin{thm}\label{genus1}Let $X$ be compact special.  Then $g(X)=1$ if and only if $\pi_1(X)=\Z^n$.
\end{thm}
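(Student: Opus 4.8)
The plan is to prove the two implications separately; the reverse one is short.

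Suppose first that $\pi_1(X)\cong\Z^n$, which we may assume has $n\geq 1$ (if $n=0$ then $X$ is CAT(0) and $g(X)=0$). Since $\pi_1(X)\neq 1$, $X$ is not CAT(0), so $g(X)\geq 1$ by Proposition~\ref{gProps}(2). On the other hand $g(X)\leq g(\Z^n)$ by the definition of the genus of a group, and $g(\Z^n)=1$, because if $g(\Z^n)\geq 2$ then $\Z^n$ would surject onto $F_2$ as remarked in the introduction, which is impossible. Hence $g(X)=1$.

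Now suppose $g(X)=1$. Since $X$ is compact, $\pi_1(X)$ is finitely generated; and since $\pi_1(X)$ embeds in a right-angled Artin group by Theorem~\ref{embedSal} and Lemma~\ref{localIsom}, it is torsion-free. So it suffices to prove that $\pi_1(X)$ is abelian, for then $\pi_1(X)\cong\Z^n$. The first step is a homological reformulation of the genus hypothesis: if $H$ and $K$ are disjoint hyperplanes with $X\setminus(H\cup K)$ connected, then $H$ and $K$ are each non-separating, and cutting $X$ along $H\cup K$ realizes $\pi_1(X)$ as a double HNN extension over a single vertex group; chasing the intersection cocycles $\phi_H,\phi_K$ shows that the complement is connected precisely when $[\phi_H]$ and $[\phi_K]$ are $\Z$-independent in $H^1(X;\Z)$, and since these classes are primitive that is the same as $[\phi_H]\neq\pm[\phi_K]$. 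Thus $g(X)\leq 1$ says exactly that any two disjoint hyperplanes $H,K$ satisfy $[\phi_H]=\pm[\phi_K]$.

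I would then induct on the number of cubes of $X$, proving the more flexible statement that $g(X)\leq 1$ forces $\pi_1(X)$ abelian. The base case $X=\mathrm{pt}$ is trivial, and if $X$ is CAT(0) then $\pi_1(X)=1$. Otherwise Corollary~\ref{CharCAT} provides a non-separating hyperplane $H$; cutting along $H$ and collapsing the separating hyperplanes of $X|H$ (Proposition~\ref{collapse}, Remark~\ref{StillSep}) gives a compact special $X'$ with strictly fewer cubes than $X$ --- as in the proof of the quasiconvex hierarchy corollary --- together with locally isometric embeddings $\iota^\pm\colon H\hookrightarrow X'$, and $\pi_1(X)$ is the HNN extension $\pi_1(X')\ast_{\pi_1(H)}$ along them. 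One checks that collapsing a separating hyperplane neither destroys the non-separating hyperplanes nor removes a crossing between the others, so disjoint hyperplanes of $X'$ pull back to disjoint hyperplanes of $X$ with compatible intersection classes; hence $g(X')\leq 1$ by the reformulation, so by induction $\pi_1(X')$ is abelian, say $\cong\Z^k$, and then $\pi_1(H)\hookrightarrow\pi_1(X')$ forces $\pi_1(H)\cong\Z^j$. It remains to show that $\iota^+_*$ and $\iota^-_*$ coincide and are isomorphisms; granting this, $\pi_1(X)\cong\pi_1(X')\times\Z$ is free abelian and the induction closes.

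This last point is the heart of the matter, and it is exactly where the global hypothesis $g(X)=1$ is used a second time. If $\iota^+_*\neq\iota^-_*$, or if the image of $\iota^-_*$ is a proper subgroup of $\Z^k$, one should be able to locate a hyperplane of $X'$ which together with $H$ produces two disjoint non-separating hyperplanes of $X$ with connected complement, contradicting $g(X)=1$; because $\pi_1(X')$ is already known to be abelian, this reduces to a computation in $H^1(X;\Z)$ via the Mayer--Vietoris sequence of the splitting of $X$ along $H$. An alternative finishing move avoids the HNN analysis altogether: $\pi_1(X)$ is residually torsion-free nilpotent, so every nontrivial subgroup has nonzero rational first homology, whence it is enough to show that the maximal free abelian cover $\overl X$ of $X$ satisfies $H_1(\overl X;\Q)=0$, and the classification of the hyperplanes of $X$ by their (proportional) intersection classes is precisely the tool with which to compute the rational Alexander invariant of $X$ and see that it vanishes. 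I expect this final rigidity step --- converting $g(X)=1$ into the statement that a non-separating hyperplane splits $X$ as a product with a circle, equivalently the vanishing of the Alexander invariant --- to be the main obstacle; the reverse direction, the reformulation, and the bookkeeping under cutting and collapsing are routine.
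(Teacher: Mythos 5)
Your forward direction ($\pi_1(X)\cong\Z^n \Rightarrow g(X)=1$) and your homological reformulation of $g(X)\leq 1$ are both fine, and these match the paper's Example for $\Z^n$ and the observations at the start of Section~3. The problem is that the core of your reverse direction is not a proof but a roadmap, and you say so yourself (``one should be able to locate a hyperplane\ldots'', ``I expect this final rigidity step\ldots to be the main obstacle''). Two specific gaps: First, the claim $g(X')\leq 1$ after cutting along $H$ and collapsing separating hyperplanes of $X|H$ is not justified. Hyperplanes of $X$ that cross $H$ become a different collection of hyperplanes in $X|H$ (they get cut and re-glued against $H^{\pm}$), and disjointness/connectivity of complements in $X'$ does not pull back cleanly to $X$; ``disjoint hyperplanes of $X'$ pull back to disjoint hyperplanes of $X$'' needs an argument precisely because of these transverse hyperplanes. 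Second, and more seriously, the entire conclusion rests on showing that $\iota^{+}_{*}=\iota^{-}_{*}$ and that this common map is an isomorphism onto $\pi_1(X')$; without that the HNN extension need not be $\Z^{k}\times\Z$, and you offer no actual argument, only two candidate strategies (a Mayer--Vietoris computation and an Alexander-invariant computation), neither of which is carried out.

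The paper's proof is structurally different and worth contrasting. It never cuts along a non-separating hyperplane. Instead it first reduces to the irreducible case (Proposition~\ref{Irreducible}), fixes a hyperplane $H$, and \emph{collapses the hyperplanes ultra-parallel to $H$} --- i.e.\ those in the equivalence class generated by disjointness --- one at a time, checking carefully (Lemmas~\ref{productNbhd}, \ref{CollapseSequence}, \ref{otherSide}, and the case analysis for self-osculation and interosculation) that each collapse preserves NPC-ness, specialness, and $g=1$. Iterating over all ultra-parallelism classes eventually produces a compact special $Y\simeq X$ in which \emph{every} pair of hyperplanes crosses; then the canonical Salvetti $\Sa_{Y}$ is a torus, $f_{Y}\colon Y\to\Sa_{Y}$ is a finite covering by compactness, and $\pi_1(X)\cong\pi_1(Y)$ is a finite-index subgroup of $\Z^n$, hence free abelian. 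The technical heart of the paper (Lemma~\ref{otherSide} on combinatorial geodesics, proved by induction on path length using the $g(X)=1$ hypothesis) is exactly the ``rigidity step'' whose analogue you flag as the main obstacle but leave unproved. So your proposal identifies the right difficulty but does not overcome it, and the paper resolves it by a collapsing argument that avoids the HNN analysis you set up.
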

\begin{proof}

Without loss of generality, we can assume $X$ is irreducible by Proposition \ref{Irreducible}, since collapsing separating hyperplanes does not change the genus.  Let $K$ be a hyperplane of $X$.  Since $X$ has genus one, we know that if $L$ is any hyperplane parallel to $K$, then $K\cup L$ separates. The idea is to imitate the proof of Proposition \ref{collapse} by collapsing all hyperplanes parallel to $K$, while maintaining non-positive curvature and specialness. Define an equivalence relation on hyperplanes as follows.  $K\sim L$ if $K$ and $L$ are parallel. As defined, this is just a symmetric relation, but we will consider the equivalence relation $\sim^*$ that it generates and say that if $K\sim^* L$ then $K$ and $L$ are \emph{ultra-parallel}.  If $K$ and $L$ are ultra-parallel and $X$ has genus one, then every minimal combinatorial loop which meets $K$ once also meets $L$ once.  Moreover, we note that if $K$ and $L$ are ultra-parallel but not parallel, then $K$ and $L$ intersect.  

Let $K_1,\ldots, K_m$ be the set of hyperplanes other than $H$ which are ultra-parallel to $H$. We want to show that result of collapsing all of $K_1,\ldots, K_m$ is NPC and special.  We remark that it may be the case that some subset of collapses fails to be special. The important point is that the full collapse is special and homotopy equivalent to $X$.  First we need a lemma which implies that we can collapse hyperplanes at all.

\begin{lemma} \label{productNbhd} If $K, L\subset X$ are distinct parallel hyperplanes, then $\overl{N_1}(K)\cong K\times [-1,1]$ and $\overl{N_1}(L)\cong L\times [-1,1]$.
\end{lemma}
\begin{proof} The lemma is symmetric in $K$ and $L$.  If $\overl{N_1}(K)$ is not embedded, then $K\cup L$ does not separate.  
\end{proof}

From the lemma, if $L$ is a hyperplane in an ultra-parallelism class $[H]$, $H\neq L$, we know that $\overl{N_1}(L)$ is embedded.  A slight issue arises when we repeatedly collapse hyperplanes.  Namely, if we collapse all the hyperplanes in $[H]$ which are actually parallel to $L$, then $\overl{N_1}(L)$ ceases to be embedded.  However, by the next lemma, we can always collapse in such a way that every hyperplane neighborhood is embedded.

\begin{lemma}\label{CollapseSequence}Given an equivalence class $[H]$ with $|[H]|\geq 2$, there always exists a sequence of collapses in which, at each stage, the remaining hyperplanes have embedded unit neighborhoods.  
\end{lemma}
\begin{proof} Given a collection of hyperplanes $\mathcal{H}$ in $X$, let $\Delta(\mathcal{H})$ be the graph obtained in the following way.  The vertices of $\Delta(\mathcal{H})$ will be the elements of $\mathcal{H}$, and two vertices are connected if their corresponding hyperplanes are disjoint.  If $\mathcal{H}=[H]$ is an ultra-parallelism class, then $\Delta([H])$ is connected. Suppose $X'$ is obtained from $X$ by collapsing a hyperplane $L$ and let $[H]'$ be the image of $[H]$ in $X'$.  Observe that $\Delta([H]')$ can be obtained from $\Delta([H])$ by deleting the vertex corresponding to $L$, and all of its incident edges.  In particular, if $L\notin[H]$, then $\Delta([H])=\Delta([H]')$.  

The lemma, translated in terms of $\Delta([H])$, states that there is a sequence of vertex deletions such that, at each stage the complement is connected.  The latter follows by induction and the well-known graph theoretic result that a connected graph always has at least two vertices which are not cut vertices, i.e. they do not disconnect the graph.  
\end{proof}

From the two previous lemmas, we know that if $[H]$ is an ultra-parallelism class, we can find $H\in [H]$ and an ordering of the hyperplanes $K_1,\ldots, K_m\in [H]\setminus\{H\}$ such that when we collapse each $K_i$ in order, the result at each stage will be an NPC cube complex homotopy equivalent to $X$.  We remark that as before, two hyperplanes cannot cross in the quotient if they did not originally.  The proof that in the quotient every hyperplane is two-sided and that no hyperplane self-intersects is exactly the same as above, and we do not need the hypothesis that $X$ has genus one.  The next lemma implies no self-osculation occurs in the quotient.  

%
\begin{lemma}\label{otherSide}Fix an ultra-parallelism class $[H]$ which has cardinality at least 2.  Then for all $L\in [H]$, and for every two vertices $v_1$ and $v_2$ in $\overl{N_1}(L)$ lying on the same side of $L$, there does not exist a combinatorial geodesic from $v_1$ to $v_2$ which crosses some edge dual to a hyperplane parallel to $L$.
\end{lemma}
\begin{proof} We claim that either $g(X)\geq 2$ or no such path exists for any $L\in [H]$.  Define $\mathcal{S}$ to be the set of combinatorial geodesics $\gamma$ such that there exists some $L\in [H]$ and satisfying
\begin{enumerate}
\item $\gamma$ does not cross $L$
\item The endpoints of $\gamma$ lie on the same side of some hyperplane $L$
\item $\gamma$ crosses some hyperplane $K$ parallel to $L$ 
\end{enumerate}
We will show that if $g(X)=1$ then $\mathcal{S}$ is empty, by induction on a least length counterexample. For reasons of parity, we consider two base cases, when the length $l(\gamma)=1$ and $l(\gamma)=2$. Let $v_1$ and $v_2$ be the endpoints of $\gamma$.  If $l(\gamma)=1$ then by condition (3), the single hyperplane $K$ which $\gamma$ crosses must be parallel to $L$. Hence, we can complete $\gamma$ to a loop $\gamma'$ by adding a path between $v_1$ and $v_2$ in $N_1(L)$ which meets $K$ exactly once.  Since $L$ is non-separating, we conclude that $g(X)\geq 2$, a contradiction.  If $l(\gamma)=2$, then $\gamma$ crosses two hyperplanes $K_1$ and $K_2$.  If $K_1\neq K_2$, then as in the previous case, since one of $K_1$ and $K_2$ is parallel to $L$, we conclude that $g(X)\geq 2$. If $K_1=K_2$, then $K_1$ is parallel to $L$ and we must consider the orientations with which $\gamma$ crosses $K_1$. If $\gamma$ crosses $K_1$ with the same orientation each time, then we again conclude that $g(X)\geq 2$.  If, on the other hand, $\gamma$ crosses $K_1$ first with one orientation, then the opposite we invoke the fact that $K_1$ does not directly self-osculate to conclude that $\gamma$ backtracks, and hence is not a combinatorial geodesic. 


Now suppose $\gamma$ is a least length counterexample of length $n\geq 3$ occurring along a hyperplane $L\in [H]$.  Let $v_1$ and $v_2$ be the endpoints of $\gamma$.  Then $\gamma$ crosses a sequence of edges dual to hyperplanes $K_{i_1}^{\epsilon_1},\ldots, K_{i_r}^{\epsilon_r}$, where $\epsilon_j=\pm1$ depending on the orientation with which $\gamma$ crosses $K_{i_j}$. \\

\noindent
\textbf{Claim 1:} $L$ is parallel to $K_{i_1}$. 

\noindent
Otherwise, by no interosculation, there is a square with corner $v_1$ where $K_{i_1}$ and $L$ cross.  Then both endpoints of the first edge $e_1$ of $\gamma$ lie in $N_1(L)$. Writing $\gamma=e_1\gamma'$, we see that $\gamma'$ is a shorter length counterexample.\\

\noindent
\textbf{Claim 2:} $\gamma$ crosses $K_{i_1}$ algebraically (i.e. counted with sign) 0 times. 

\noindent
If not, then we can complete $\gamma$ to a loop $\gamma'$ as above, which crosses $K_{i_1}$ algebraically non-zero times and crosses $L$ geometrically 0 times.  Since $L$ is assumed non-separating, we conclude that $g(X)\geq 2$, a contradiction.\\

At this point, we can assume that $\gamma$ must cross $K_{i_1}$ algebraically 0 times.  If we consider the sequence of crossings we can find an innermost pair with opposite sign, i.e. a subpath $e_1\alpha e_2\subset \gamma$ such that $e_1$ and $e_2$ are both dual to $K_{i_1}$ but with opposite orientation, and such that $\alpha$ does not cross $K_{i_1}$.  Clearly $\alpha$ is not empty, otherwise $\gamma$ would have backtracking.  

We claim that either $\gamma$ is not a combinatorial geodesic, or $\alpha$ is a shorter length counterexample.  If $\alpha$ crosses some hyperplane parallel to $K_{i_1}$ then $\alpha\in \mathcal{S}$, since it connects two vertices on the same side $K_{i_1}$, is a combinatorial geodesic since it is a subpath of $\gamma$ and satisfies $l(\alpha)\leq l(\gamma)-2$.  It is therefore a shorter element of $\mathcal{S}$, contradicting our assumption on $\gamma$.  Otherwise, $K_{i_1}$ meets every hyperplane crossed by $\alpha$.  In this case however, we can replace $\gamma$ by a combinatorially isotopic path with backtracking.  To see this, consider the first edge $f_1$ of $\alpha$.  By no interosculation we can find a square $C$ bounded at a corner by $e_1$ and $f_1$.  We therefore replace $e_1\alpha$ by $\alpha'=f_1'e_1'\alpha''$, where $e_1'$ and $f_1'$ are the opposite edges of $C$, and $\alpha''$ is the remainder of $\alpha$ after $f_1$.  Continuing in this way, we replace $e_1\alpha e_2$ by $\alpha_0 e_0\overl{e}_0$, where $e_0$ is dual to $K_{i_1}$. Since $\gamma$ was isotopic to a path with backtracking, we conclude that $\gamma$ was not combinatorially geodesic, contradicting our assumption.  Therefore $\mathcal{S}$ is empty, as desired. 
%
%
%
%
\end{proof}
\noindent
\textbf{No self-osculation:} Suppose that a hyperplane $L$ directly self-osculates after collapsing some collection of the $K_i$.  Then there is a path $\gamma$ consisting of edges dual to some subcollection $K_1,\ldots, K_m$, which connects two vertices lying on the same side of $\overl{N_1}(L)$.  If $L$ intersects each of $K_1,\ldots, K_m$, then no self-osculation occurs in the quotient.  Otherwise, $L$ is parallel to some $K_j$.  But then Lemma \ref{otherSide} implies that this is impossible.  \\

At this point we have checked that after collapsing each of the $K_i$, the resulting space immerses in a Salvetti complex.  For a local isometry, we need to further check that no interosculation occurs.  \\

\noindent
\textbf{No interosculation:} By the remark about intersecting hyperplanes above, we need only consider the case where hyperplanes $L_1$ and $L_2$ intersect in $X$ and osculate in the quotient.  In this case there is a path $\gamma$ dual to hyperplanes $K_1,\ldots,K_m$ which are ultra-parallel to $H_1$ and edges $f_1$ and $f_2$ dual to $L_1$ and $L_2$, respectively at either end of $\gamma$.  Moreover, $L_1$ and $L_2$ meet in some other square. If at least one of $L_1$ and $L_2$ intersects all of the $K_i$, then by no interosculation of $X$, after collapsing there is a square containing $f_1$ and $f_2$.   Finally, we have the case where both $L_1$ and $L_2$ are parallel to one of the $K_i$.  There are three cases depending on which sides of $L_1$ and $L_2$ that $\gamma$ connects. See Figure \ref{fig:Genus1Inter} for a schematic.  Note that in this case all hyperplanes have embedded closed unit neighborhoods.  

At most one of $L_1$ and $L_2$ is the chosen hyperplane $H_1$.  In the case that neither $L_1$ nor $L_2$ is $H_1$ then $L_1$ and $L_2$ are both eventually collapsed and no interosculation occurs in the quotient.  Then assume that $L_1=H_1$. In either case (1), (2), or (3) we find that no interosculation occurs in the quotient and either $H_1$ directly self-osculates, which we have already shown is impossible, or $H_1$ indirectly osculates which does not contradict specialness of the quotient.  \\


\begin{figure*}[h!]

\centering
\begin{subfigure}[t]{0.5\textwidth}
\includegraphics[width=2in]{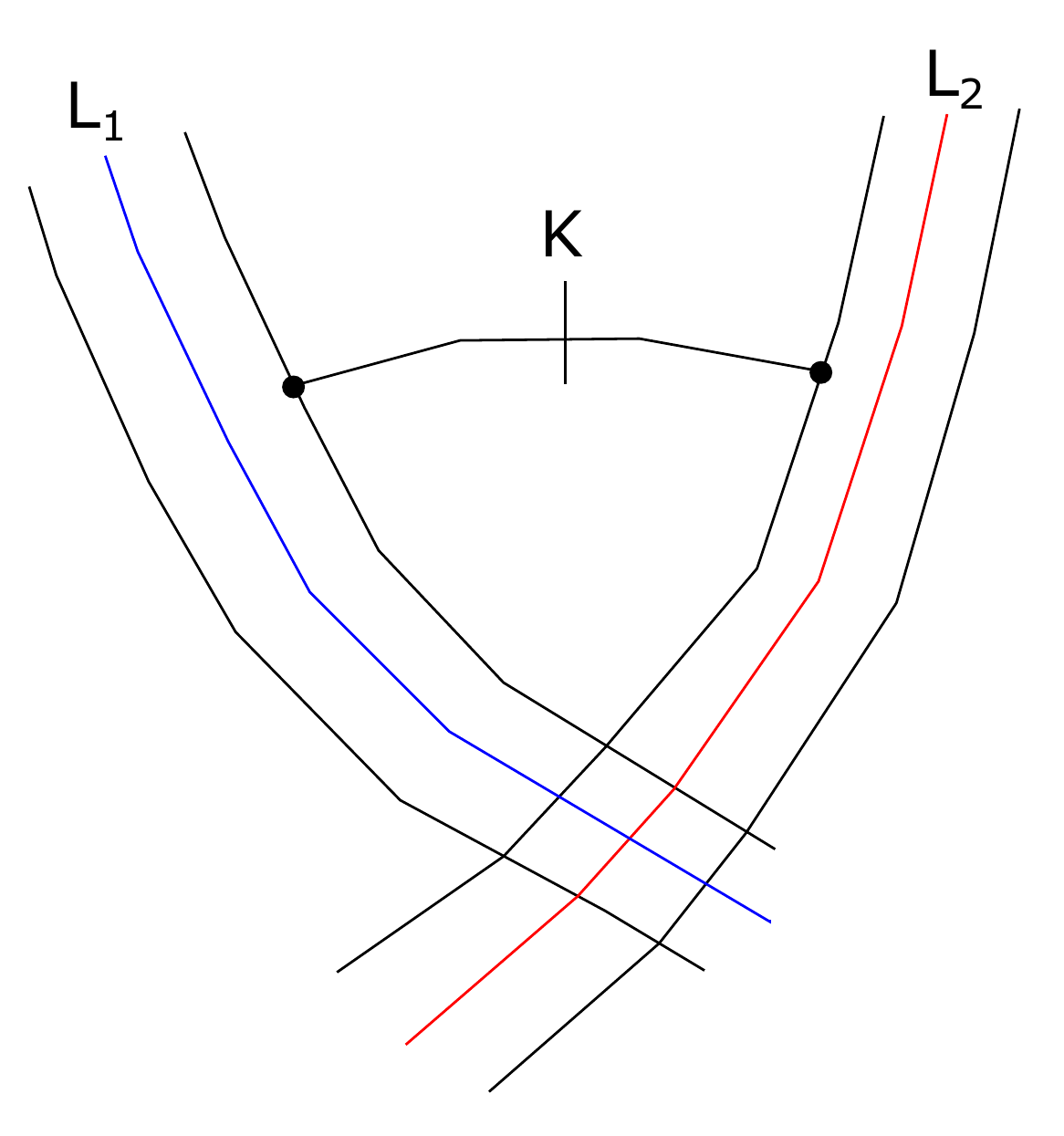}
\centering
\caption*{Case (1)}
\label{Interosc1}
\end{subfigure}
\quad
\begin{subfigure}[t]{0.5\textwidth}
\centering
\includegraphics[width=2in]{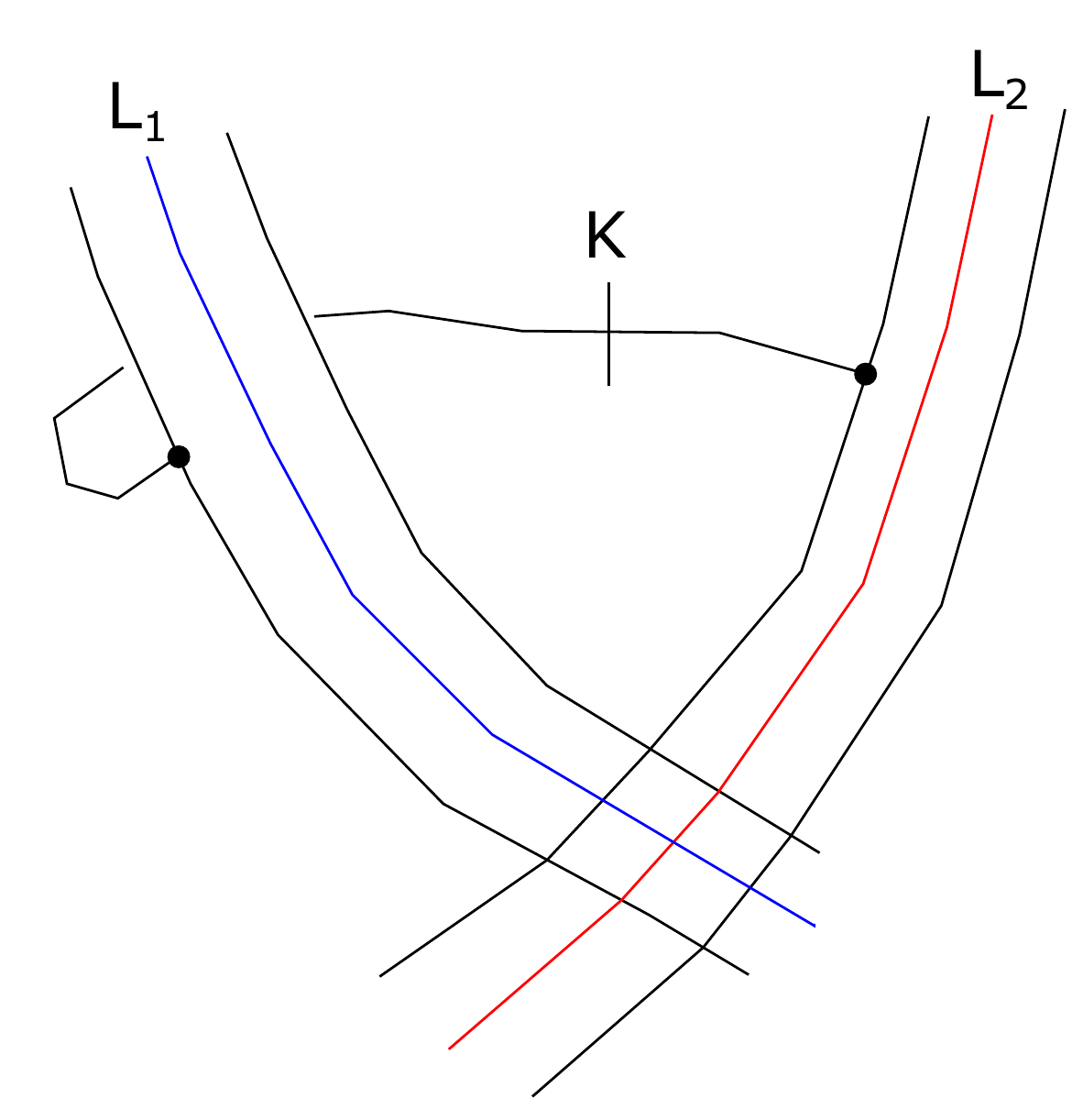}
\caption*{Case (2)}
\label{Interosc2}
\end{subfigure}
\quad
\begin{subfigure}[t]{0.5\textwidth}
\centering
\includegraphics[width=2in]{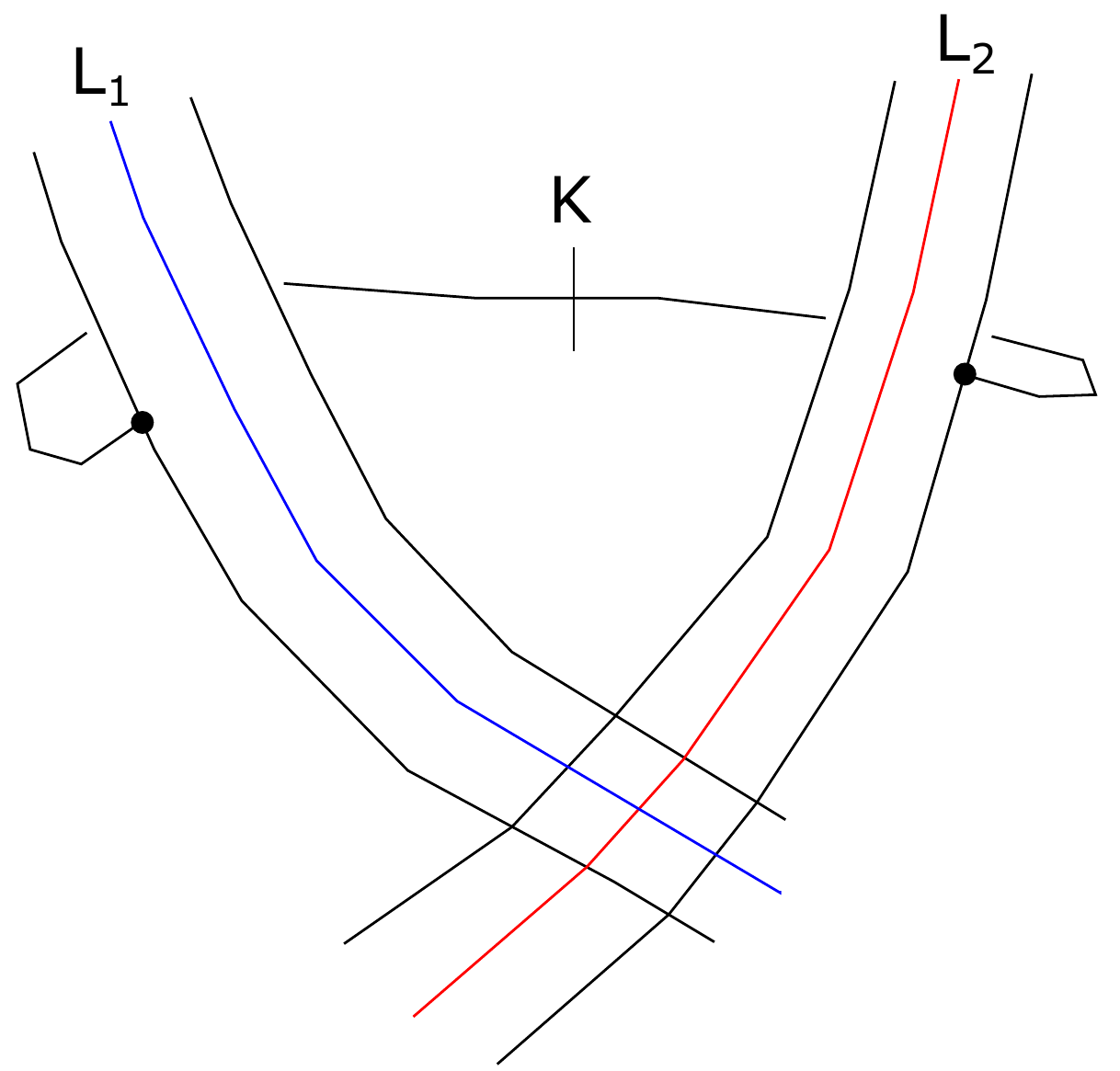}
\caption*{Case (3)}
\label{Interosc3}
\end{subfigure}
\caption{Possible configurations resulting in interosculation in the quotient.}
\label{fig:Genus1Inter}
\end{figure*}



Let $X'$ denote the result of collapsing all hyperplanes ultra-parallel to $H$, this is still NPC by Lemma \ref{CollapseSequence}. To see that $X'$ has genus one, note that if $\mathcal{H}$ is the set of hyperplanes of $X$, then $\mathcal{H}\setminus\{K_i\}$ is in one-to-one correspondence with the set of hyperplanes of $X'$ and that if $L_1\notin \{K_i\}$ and $L_2\notin\{K_i\}$ are disjoint and separate $X$, then their images under the collapse are disjoint and separate $X'$. In $X'$, the image of $H$ meets every other hyperplane. 


To finish the proof, we proceed as follows.  Choose some ultra-parallelism class $[H]$. If $[H]$ is a singleton, then $H$ meets every hyperplane of $X$ and we don't do anything.  If $|[H]|\geq 2$, then by Lemma \ref{CollapseSequence} we can find $H\in [H]$ such that if $K_1,\ldots, K_m$ are hyperplanes ultra-parallel to $H$, we can collapse the $K_i$ in some order such that the resulting cube complex $X'$ is NPC special, has genus one, and every hyperplane of $X'$ meets the image of $H$.  Now choose another ultra-parallelism class and repeat.  Since $X$ has only finitely many hyperplanes, in the end we obtain a special cube complex $Y$ homotopy equivalent to $X$ with the property that any two hyperplanes meet.  It follows that the corresponding Salvetti $\Sa_Y$ is a torus, and hence $\pi_1(X)=\pi_1(Y)\leq \Z^n$ for some $n$, by Lemma \ref{embedSal}.  In fact, since $Y$ is compact, the map $f_Y:Y\rightarrow \Sa_Y$ is a surjective, combinatorial local isometry, hence must be a finite covering. 
\end{proof}

\begin{rmk} If $X$ is non-compact but has finitely many hyperplanes, the same proof as above works. However, it may be the case that the quotient complex $Y$ is non-compact.  Then the characteristic map  $f_Y:Y\rightarrow \Sa_Y$ will be a surjective, combinatorial local isometry, but all we can conclude is that it is a covering map.  Thus the image of $\pi_1(Y)\hookrightarrow\pi_1(\Sa_Y)\cong\Z^n$ may be a subgroup of infinite index.
\end{rmk}

\noindent
As an immediate corollary we obtain:

\begin{corollary}\label{unsplit} If $\Gamma$ is non-abelian then every compact special cube complex with $\pi_1(X)=\Gamma$ satisfies $g(X)\geq2$. 
\end{corollary}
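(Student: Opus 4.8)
The plan is to deduce this immediately from Theorem \ref{genus1} together with Proposition \ref{gProps}, treating it as a bookkeeping consequence of the genus-one classification rather than as a result requiring new ideas. Let $X$ be any compact special cube complex with $\pi_1(X) = \Gamma$ non-abelian.

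First I would rule out $g(X) = 0$. Since $\Gamma$ is non-abelian it is in particular non-trivial, so $X$ is not simply connected; as a CAT(0) cube complex is contractible, $X$ cannot be CAT(0). By Proposition \ref{gProps}(2), $g(X) = 0$ would force $X$ to be CAT(0), so we must have $g(X) \geq 1$. Next I would rule out $g(X) = 1$: if $g(X) = 1$, then Theorem \ref{genus1} gives $\pi_1(X) = \Gamma \cong \Z^n$ for some $n$, contradicting that $\Gamma$ is non-abelian. Combining the two, $g(X) \neq 0$ and $g(X) \neq 1$, and since the genus takes values in $\{0,1,2,\dots\} \cup \{\infty\}$ this yields $g(X) \geq 2$ (with $g(X) = \infty$ counted as $\geq 2$).

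There is no real obstacle here: all of the work is contained in Theorem \ref{genus1}, and the only points one needs to be careful about are that the genus is a non-negative integer or $\infty$, that genus zero characterizes the trivial fundamental group, and that genus one characterizes the free abelian fundamental groups — all of which are already established above.
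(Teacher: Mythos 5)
Your argument is correct and is exactly the intended reading: the paper states the corollary as an immediate consequence of Theorem~\ref{genus1} (together with the genus-zero characterization from Proposition~\ref{gProps} / Corollary~\ref{CharCAT}) without spelling out the two elementary case exclusions you carry out.
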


\noindent
We can extend this result to the finite-dimensional case to give a proof of Theorem \ref{main}:
\begin{corollary}\label{Large} If $X$ is special and finite-dimensional then either $\pi_1(X)$ is abelian or surjects onto $F_2$.

\end{corollary}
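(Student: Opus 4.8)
The plan is to show that $g(\pi_1(X))\ge 2$ whenever $\pi_1(X)$ is non-abelian; as observed after the definition of genus, this already yields a surjection $\pi_1(X)\twoheadrightarrow F_2$ (take a special $X'$ with $\pi_1(X')=\pi_1(X)$ carrying two disjoint non-separating hyperplanes whose union does not disconnect $X'$, cut $X'$ open along them, and collapse the connected complement to a point). So I would argue by contradiction, assuming $g(\pi_1(X))\le 1$. Then $g(X)\le 1$, and since $\pi_1(X)\ne 1$ we have $g(X)\ne 0$ by Proposition \ref{gProps}, so $g(X)=1$. Collapsing the separating hyperplanes of $X$, one at a time using Remark \ref{StillSep} and van Kampen as in Proposition \ref{Irreducible}, I would reduce to the case that $X$ is irreducible, still finite dimensional, with $g(X)=1$ and $\pi_1(X)$ non-abelian; in particular every hyperplane of $X$ is now non-separating.

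The heart of the proof is to rerun the collapsing procedure of Theorem \ref{genus1}: for each ultra-parallelism class of hyperplanes of cardinality at least two, collapse all but one of its hyperplanes. Because $g(X)=1$, Lemmas \ref{productNbhd}, \ref{CollapseSequence}, \ref{otherSide} and the four pathology checks in that proof --- none of which use compactness --- show that each such collapse yields an NPC special cube complex homotopy equivalent to $X$. The only genuinely new input, replacing the finiteness used in the compact case, is that $X$ has at most $\dim X$ ultra-parallelism classes: choosing one representative from each class, no two representatives are disjoint (disjoint hyperplanes are ultra-parallel), so they pairwise cross, and in a special cube complex of dimension $d$ at most $d$ hyperplanes can pairwise cross, since such a family meets a common cube. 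Hence only finitely many classes are treated, and after doing so we reach a special cube complex $Y$ with $\pi_1(Y)=\pi_1(X)$ in which every two hyperplanes cross.

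It then remains to see that $Y$ is compact, which forces a contradiction. Since the hyperplanes of $Y$ pairwise cross, there are at most $\dim Y\le\dim X$ of them; arguing by induction on dimension, each hyperplane of $Y$ is again a finite dimensional special cube complex all of whose hyperplanes (namely those of $Y$ crossing it) pairwise cross, hence is compact, so $Y$ is covered by the finitely many compact neighborhoods $\overline{N_1}(H)$ and is itself compact. Now the crossing graph $\Gamma(Y)$ is complete, so $\Sa_Y$ is a torus $\T^n$; by Theorem \ref{embedSal} and compactness of $Y$, the local isometry $f_Y\colon Y\to\Sa_Y$ is, as in the proof of Theorem \ref{genus1}, a finite covering map, so $\pi_1(X)=\pi_1(Y)$ is a finite-index subgroup of $\Z^n$ and in particular abelian --- contradicting our hypothesis. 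Therefore $g(\pi_1(X))\ge 2$ and $\pi_1(X)$ surjects onto $F_2$.

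The step I expect to be the main obstacle is the collapsing itself once $X$ has infinitely many hyperplanes. Although only finitely many ultra-parallelism classes occur, a single class can contain infinitely many hyperplanes, so ``collapsing all but one of them'' has to be performed as a limiting (transfinite) process, and one must verify that the resulting limit is still a special cube complex with unchanged fundamental group; the same care is needed in the preliminary step of collapsing infinitely many separating hyperplanes. Once this is in place, the remaining ingredients --- the pathology checks, the homotopy equivalences, and the final identification of $Y$ with a torus --- transfer verbatim from the compact setting.
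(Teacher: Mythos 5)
Your approach takes a genuinely different route from the paper's, and unfortunately it contains a gap that is fatal as written, in addition to the one you flag yourself.

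The paper's proof does \emph{not} try to reduce the general finite-dimensional case to the compact machinery by collapsing everything. Instead it splits into two cases according to whether $X$ has finitely or infinitely many non-separating hyperplanes. If there are infinitely many, a Ramsey-style observation (at most $\dim X$ hyperplanes can pairwise cross, so an infinite family contains an infinite pairwise-disjoint subfamily) immediately produces two disjoint non-separating hyperplanes whose union does not separate: take a loop $\gamma$ crossing one of them exactly once; $\gamma$ is compact and so misses all but finitely many of the disjoint family, and any hyperplane $\gamma$ misses cannot be ``homologically tied'' to the one it crosses once, so $g(X)\geq 2$. If there are only finitely many non-separating hyperplanes, the collapse procedure of Theorem \ref{genus1} is applied to that \emph{finite} set only, so no transfinite process is needed; and instead of trying to show the resulting complex $Y$ is compact, the paper takes two arbitrary loops, chooses a compact subcomplex containing them, collapses the finitely many separating hyperplanes meeting that subcomplex, and deduces that the two loops commute because after this local collapse they cross only hyperplanes that pairwise intersect. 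This is much lighter than your plan and requires neither a transfinite collapse nor any compactness of $Y$.

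Beyond the transfinite-collapse difficulty you flag (which is real --- collapsing infinitely many separating hyperplanes, and infinitely many members of a single ultra-parallelism class, is not covered by Propositions \ref{collapse} and \ref{Irreducible}), your compactness induction is incorrect. You argue that each hyperplane $H$ of $Y$ is itself a special cube complex all of whose hyperplanes pairwise cross, hence is compact by induction. But the hyperplanes of $H$ are the connected components of $H\cap K$ as $K$ ranges over hyperplanes of $Y$ crossing $H$, and in an NPC (not CAT(0)) cube complex such an intersection can be disconnected; two distinct components of $H\cap K$ are \emph{disjoint} hyperplanes of $H$, so the hypothesis of the induction fails. Indeed the paper's remark immediately following Theorem \ref{genus1} explicitly warns that $Y$ can be non-compact in this situation. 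What the remark does give you --- and what you could use instead of compactness if the collapse itself were justified --- is that $f_Y\colon Y\rightarrow \Sa_Y\cong\T^n$ is still a covering map, so $\pi_1(Y)$ embeds in $\Z^n$ and is therefore abelian even if the covering has infinite degree. Your observation that there are at most $\dim X$ ultra-parallelism classes, because representatives pairwise cross, is correct and a nice point, but it does not by itself circumvent the two gaps above.
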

\begin{proof} The compact case follows from Theorem \ref{genus1} .  In the non-compact case, we observe that if there are infinitely many non-separating hyperplanes, then since $X$ is finite-dimensional there are infinitely many disjoint non-separating hyperplanes.  Take a loop $\gamma$ which meets some non-separating hyperplane exactly once.  Then $\gamma$ only intersects finitely many non-separating hyperplanes.  Thus we must have at least two disjoint non-separating hyperplanes whose union does not disconnect $X$ and $g(X)\geq 2$.  Otherwise, there are only finitely many non-separating hyperplanes and if $g(X)=1$, we can apply the procedure of Theorem \ref{genus1} to these hyperplanes.  We obtain a homotopy equivalent special cube complex $Y$ in which all non-separating hyperplanes meet.  

Suppose $\gamma_1$ and $\gamma_2$ are two loops in the 1-skeleton $Y^{(1)}$ based a point $p\in Y$.  Choose a compact subcomplex $K$ containing $\gamma_1\cup \gamma_2$.  By collapsing all the separating hyperplanes which meet $K$, we get a complex $Y'$ homotopy equivalent to $Y$, and in which the images $\gamma_1'$ and $\gamma_2'$, of $\gamma_1$ and $\gamma_2$ respectively, only cross non-separating hyperplanes.  Since the non-separating hyperplanes of $Y'$ all cross we conclude that the homotopy classes $[\gamma_1']$ and $[\gamma_2']$ commute in $\pi_1(Y')$. We conclude that $\pi_1(Y)=\pi_1(X)$ is abelian.    
\end{proof}

%

The characterization of genus 1 special groups also has some immediate corollaries for groups with genus $\geq 2$.

\begin{corollary} Let $\Gamma$ be a non-abelian special group.  Then \begin{enumerate}
\item $\text{rk}(H_1(\Gamma))\geq2$.  
\item $\Gamma$ retracts onto $F_2$.
\item The rank of $H_1$ grows at least linearly in finite index subgroups.  
\item The growth of finite index subgroups in $\Gamma$ is at least superexponential $(\succeq ne^{n\log(n)-n})$ in index.  
\end{enumerate}
Moreover, if $\Gamma$ is not virtually abelian but is virtually special, $(3)$ and $(4)$ still hold.  
\end{corollary}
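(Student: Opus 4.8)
The plan is to reduce all four statements to the single assertion that a non-abelian compact special group \emph{retracts} onto $F_2$, and then transport the consequences through finite-index subgroups. Pick a compact special $X$ with $\pi_1(X)=\Gamma$; by Corollary~\ref{unsplit} we have $g(X)\geq 2$, so there is a pair of disjoint hyperplanes $K_1,K_2$ whose union does not separate $X$, which means $X|(K_1\cup K_2)$ is connected. The graph of spaces dual to $\{K_1,K_2\}$ then exhibits $\Gamma$ as a multiple HNN extension with base group $\pi_1(X|(K_1\cup K_2))$, edge groups $\pi_1(K_1)$ and $\pi_1(K_2)$, and two stable letters $t_1,t_2$. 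Collapsing the base group to $1$ sends every defining relation $t_igt_i^{-1}=g'$ to a triviality, so it induces a surjection $r\colon\Gamma\twoheadrightarrow F_2=\langle a,b\rangle$ with $r(t_1)=a$, $r(t_2)=b$; since $F_2$ is free, $a\mapsto t_1$, $b\mapsto t_2$ extends to a homomorphism $s\colon F_2\to\Gamma$ with $r\circ s=\id$, so $r$ is a retraction. This is (2), and (1) follows immediately because $H_1(\Gamma)\twoheadrightarrow H_1(F_2)=\Z^2$. (In the merely finite-dimensional setting one still has at least a surjection $\Gamma\twoheadrightarrow F_2$ by Corollary~\ref{Large}, which is all (1), (3), and (4) need.)

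For (3) I would use that $F_2$ has a subgroup $F'$ of every index $m\geq 1$ (say the kernel of $F_2\twoheadrightarrow\Z/m$), which by Nielsen--Schreier is free of rank $m+1$. Then $\Gamma_m:=r^{-1}(F')$ satisfies $[\Gamma:\Gamma_m]=[F_2:F']=m$ and surjects onto $F'$, whence $\text{rk}\,H_1(\Gamma_m)\geq m+1$; so along this family the rank of $H_1$ of finite-index subgroups of $\Gamma$ grows at least linearly in the index. For (4), any surjection $q\colon\Gamma\twoheadrightarrow F_2$ makes $H\mapsto q^{-1}(H)$ an injection from index-$n$ subgroups of $F_2$ to index-$n$ subgroups of $\Gamma$, so the subgroup-counting function $a_n(\cdot)$ (the number of index-$n$ subgroups) of $\Gamma$ dominates that of $F_2$; and it is classical that $a_n(F_2)$ is asymptotic to $n\cdot n!$, with $n!\sim\sqrt{2\pi n}\,e^{n\log n-n}$, which yields the stated superexponential lower bound.

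If $\Gamma$ is virtually special but not virtually abelian, choose a finite-index special subgroup $\Gamma_0\leq\Gamma$; it cannot be abelian, as otherwise $\Gamma$ would be virtually abelian, so the above applies to $\Gamma_0$. Since an index-$m$ subgroup of $\Gamma_0$ has index $m[\Gamma:\Gamma_0]$ in $\Gamma$, the linear lower bound for $\text{rk}\,H_1$ persists with the constant divided by $[\Gamma:\Gamma_0]$; likewise $a_{n[\Gamma:\Gamma_0]}(\Gamma)\geq a_n(\Gamma_0)\geq a_n(F_2)$, and rescaling the index by a fixed constant does not change the rate $e^{n\log n-n}$. Hence (3) and (4) hold for $\Gamma$ as well.

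The mathematics here is light; the only points needing care are setting up the double-HNN splitting so that one gets an honest retraction (not merely a surjection) onto $F_2$ in the compact case, and reading (3) and (4) correctly as asserting the \emph{existence} of a family of finite-index subgroups realizing the growth rather than a uniform bound over all of them---the example $\Gamma=F_2\times\Z$ shows the uniform version of (3) is false.
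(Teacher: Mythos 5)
Your proof is correct and follows essentially the same approach as the paper: a surjection $\Gamma \twoheadrightarrow F_2$ obtained from the genus bound, the fact that any surjection onto a free group splits, and Nielsen--Schreier together with Hall's subgroup-growth formula for $F_2$. The double-HNN construction is a redundant detour --- you still invoke projectivity of $F_2$ to obtain the section --- so citing Corollary \ref{Large} directly also gives (2) in the finite-dimensional, not-necessarily-compact case, which your parenthetical slightly undersells.
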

\begin{proof} Statements (1) and (2) follow from Corollary \ref{Large} and the fact that any surjection onto a free group splits.  Statements (3) and (4) follow from the corresponding result for $F_2$. The growth of finite index subgroups in a free group is due to Hall \cite{Hall49}.   
\end{proof}

Note that from (2) we get a quick proof of the Tits alternative for virtually special groups: either $\Gamma$ contains a non-abelian free group, or it is virtually abelian.  The genus also restricts which groups can arise as fundamental groups of special cube complexes.  Recall that if $\Sigma_g$ is the closed surface of genus $g$, the group of orientation preserving diffeomorphisms of $\Sigma_g$ is denoted $\Diff^+(\Sigma_g)$.  The mapping class group $\Mod_g$ is defined to be $\pi_0(\Diff^+(\Sigma_g))$, the group of connected components of $\Diff^+(\Sigma_g)$. Thus, two diffeomorphisms are identified if they are isotopic.  The abelianization map $\pi_1(\Sigma_g)\rightarrow \Z^{2g}$ induces a surjective map $\Mod_g\rightarrow \text{Sp}_{2g}(\Z)$. We say $\phi\in \Mod_g$ has \emph{full rank} if the action of $\phi$ on $H_1(\Sigma_g)$ has finite quotient.  


A theorem of Thurston and Nielsen states that every mapping class $\phi\in \Mod_g$ falls into one of three categories: finite order, reducible, or pseudo-Anosov (see \cite{FaMa12}, Theorem 13.2).  \emph{Reducible} means that $\phi$ has a lift to $\Diff^+$ which fixes some 1-submanifold.  \emph{Pseudo-Anosov} means that $\phi$ does not preserve any conjugacy class in $\pi_1(\Sigma_g)$.  Finite order and reducible are not mutually exclusive, but both are disjoint from pseudo-Anosov. Thurston showed that if $\phi$ is pseudo-Anosov, then the mapping torus $M_\phi$ corresponding to any lift of $\phi$ to $\Diff^+$ supports a constant curvature-(--1) Riemannian metric.  This construction provides many examples of hyperbolic 3-manifolds.  

\begin{corollary} If $\Gamma$ is any one of the following, then $\Gamma$ is virtually compact special but not compact special:
\begin{enumerate}
\item virtually abelian but not abelian.
\item the fundamental group of a hyperbolic $\Q H\Sa^3$.  
\item $\pi_1(M_\phi)$ where $\phi$ is pseudo-Anosov and has full rank.   
\end{enumerate}
\end{corollary}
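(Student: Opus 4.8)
The plan is to check, for each of the three families, the two halves of the statement --- ``virtually compact special'' and ``not compact special'' --- separately. The positive direction in case (1) is immediate: a group which is virtually abelian contains a finite-index subgroup isomorphic to $\Z^n$, and $\Z^n=\pi_1(\T^n)$ with the $n$-torus a compact special cube complex. For cases (2) and (3) the positive direction is Agol's theorem: the fundamental group of a closed hyperbolic $3$-manifold acts properly and cocompactly on a CAT(0) cube complex and is therefore virtually special \cite{Ag13}, and cocompactness of the action guarantees that the finite-index special subgroup is the fundamental group of a \emph{compact} special cube complex. In case (3) one first applies Thurston's hyperbolization theorem for fibered $3$-manifolds (recalled above) to know that $M_\phi$ is a closed hyperbolic $3$-manifold, and then invokes Agol.

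For the negative direction the uniform tool is Corollary \ref{Large}: if $\Gamma$ is non-abelian and is the fundamental group of a compact (hence finite-dimensional) special cube complex, then $\Gamma$ surjects onto $F_2$, and in particular $\mathrm{rk}\,H_1(\Gamma)\geq 2$. In each of the three cases $\Gamma$ is non-abelian: a virtually abelian group that is not abelian is non-abelian by hypothesis, and the fundamental group of a closed hyperbolic $3$-manifold is a non-elementary word-hyperbolic group, so not even virtually abelian. Hence it suffices in each case to exhibit an obstruction to the conclusion of Corollary \ref{Large}.

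In case (1): a virtually abelian group has all of its quotients virtually abelian, whereas $F_2$ is not virtually abelian, so $\Gamma$ cannot surject onto $F_2$ and is not compact special. In case (2): a rational homology $3$-sphere $M$ has $H_1(M;\Q)=0$, so $\mathrm{rk}\,H_1(\Gamma)=0<2$, contradicting Corollary \ref{Large}. In case (3): the extension $1\to\pi_1(\Sigma_g)\to\Gamma\to\Z\to1$ determines the homological Wang sequence, whose low-degree part
\[
H_1(\Sigma_g)\xrightarrow{\ \phi_*-\mathrm{id}\ }H_1(\Sigma_g)\longrightarrow H_1(M_\phi)\longrightarrow H_0(\Sigma_g)\xrightarrow{\ 0\ }H_0(\Sigma_g)
\]
yields a short exact sequence $0\to\mathrm{coker}(\phi_*-\mathrm{id})\to H_1(M_\phi)\to\Z\to0$. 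The full rank hypothesis says precisely that $\phi_*$ does not have $1$ as an eigenvalue, so $\det(\phi_*-\mathrm{id})\neq0$ and $\mathrm{coker}(\phi_*-\mathrm{id})$ is finite; therefore $\mathrm{rk}\,H_1(\Gamma)=1<2$, again contradicting Corollary \ref{Large}.

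I do not expect a genuine obstacle: the argument is bookkeeping that combines the homological bound already proved with standard $3$-manifold facts. The only points needing attention are that the virtual-specialness input for hyperbolic $3$-manifold groups (through Thurston, Kahn--Markovic, Bergeron--Wise, and Agol) does deliver a \emph{compact} special complex, and that ``full rank'' must be read as ``$\phi_*$ has no eigenvalue $1$'', which is exactly what forces the Wang-sequence computation to pin down $b_1(M_\phi)=1$ rather than merely $b_1(M_\phi)\geq1$.
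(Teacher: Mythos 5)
Your proposal is correct and follows essentially the same route as the paper: Agol's theorem for virtual compact specialness in cases (2) and (3), and the rank/surjection obstruction from Corollary \ref{Large} (equivalently the genus characterization) in all three cases. The only cosmetic differences are that in case (1) the paper argues that a virtually abelian group cannot contain $F_2$ (using that a surjection onto a free group splits) whereas you argue directly that its quotients are all virtually abelian, and in case (3) you spell out the Wang sequence where the paper just asserts the Mayer--Vietoris computation; these are interchangeable.
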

\begin{proof} 

\begin{enumerate}
\item If $\Gamma$ is virtually abelian and compact special then it does not contain $F_2$, hence $g(\Gamma)=1$.  Theorem \ref{genus1} then implies that $\Gamma\cong\Z^n$ for some $n$.  
\item Results of Agol (\cite{Ag13}, Theorems 1.1 and 9.3)  imply that every closed hyperbolic 3-manifold group is virtually compact special.  A hyperbolic rational homology 3-sphere has $\text{rk}(H_1)=0$, hence cannot be compact special. We remark that it was already known that $\text{rk}(H_1)\geq1$ for compact special cube complexes.  
\item A straightforward application of Mayer-Vietoris shows that $\text{rk}(H_1(M_\phi))=1$.  Thurston's theorem and Agol's theorem show that $\pi_1(M_\phi)$ is virtually compact special, but obviously $\pi_1(M_\phi)$ contains $F_2$ and hence is not virtually abelian.  
\end{enumerate}
\end{proof}

\section{Automorphisms of special cube complexes}
In this section we investigate the action of cube complex automorphisms on homology.  Our starting point comes from two well-known examples:\begin{enumerate}
\item Every non-identity torsion element of $\Out(F_n)$ acts nontrivially on $H_1(F_n)$.
\item Every non-identity torsion element of $\Mod_g$ acts nontrivially on $H_1(\Sigma_g)$.
\end{enumerate}
Observe that both $F_n$ and $\pi_1(\Sigma_g)$ are compact special.  This motivates the following
\begin{question}\label{trivOut} Suppose $\Gamma$ is compact special.  Does every non-identity finite order element of $\Out(\Gamma)$ act non-trivially on $H_1(\Gamma)$?
\end{question}
The first step is to ensure that $H_1(\Gamma)$ is non-trivial, but as we have seen, this is satisfied as soon as $\Gamma\neq1$. We do not propose to answer this question fully here, but we will generalize the results above to cubulated hyperbolic and right-angled Artin groups.   Our strategy will be two-fold. First, realize elements of $\Out(\Gamma)$ as automorphisms of compact special cube complexes with fundamental group $\Gamma$.  Second, use the geometry of the compact special cube complex to show certain automorphisms act non-trivially on homology.  A question closely related to the one above is thus
\begin{question}When does an automorphism of a compact special cube complex $X$ act non-trivially on $H_1(X)$?  
\end{question}
Since there are compact CAT(0) cube complexes with arbitrarily large (finite) automorphism groups, the answer to this question is not, in general, ``Always." Nevertheless, we will provide circumstances under which every automorphism acts non-trivially on first homology.
\subsection{A criterion for non-triviality}
The following proposition gives a useful criterion to guarantee that every automorphism acts non-trivially on $H_1(X)$.

\begin{proposition} \label{Criterion}Let $X$ is compact special $f:X\rightarrow X$ is an automorphism.  Suppose $X$ satisfies the following three conditions for hyperplanes $K_1$ and $K_2$
\begin{enumerate}
\item If $K_1\cap K_2\neq \emptyset$, there exists $\alpha\in H_1(X)$ such that $\alpha.K_1\neq \alpha.K_2$.  
\item If $K_1\cap K_2=\emptyset$ and $K_1\cup K_2$ separate $X$, every component of $X\setminus(K_1\cup K_2)$ contains a non-separating hyperplane which does not meet $K_1$ or $K_2$.
\item If $f(K_1)=K_1$ for all $K_1$ then $f$ is the identity.  
\end{enumerate}
Then if $f$ acts trivially on $H_1(X)$, $f$ is the identity.
\end{proposition}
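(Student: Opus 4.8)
The plan is to prove the contrapositive's core, namely: if $f$ acts trivially on $H_1(X)$, then $f$ fixes every hyperplane of $X$ setwise; condition $(3)$ then immediately yields $f=\mathrm{id}$. Since $X$ is compact, $\mathrm{Aut}(X)$ is finite, so $f$ has finite order; and since $H_1(X)$ is finitely generated free abelian, $f_*=\mathrm{id}$ on $H_1(X)$ is equivalent to $f^*=\mathrm{id}$ on $H^1(X;\Z)=\mathrm{Hom}(H_1(X),\Z)$. The mechanism underlying everything is that for a non-separating hyperplane $K$ one has $f^*\phi_{f(K)}=\pm\phi_K$, the sign recording whether $f$ matches the chosen transverse orientations of $K$ and $f(K)$; hence $f^*=\mathrm{id}$ forces $\phi_{f(K)}=\pm\phi_K$. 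More usefully, if one chooses transverse orientations compatibly along an $f$-orbit of non-separating hyperplanes $K_0,\dots,K_{m-1}$ (so that $f$ carries the chosen orientation of $K_i$ to that of $K_{i+1}$ for $i<m-1$), then $f^*=\mathrm{id}$ forces the \emph{actual equality} $\phi_{K_0}=\phi_{K_1}=\dots=\phi_{K_{m-1}}$ in $H^1(X;\Z)$.

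Next I would fix a non-separating hyperplane $K$ with $f(K)\neq K$, let $O=\{K_0,\dots,K_{m-1}\}$ ($K_0=K$, $m\ge 2$) be its $f$-orbit, and derive a contradiction. \emph{Step 1: no two members of $O$ cross.} If $K_i$ crossed $K_j$, then after replacing $f$ by a suitable power $g=f^k$ we may assume $K_0$ crosses $g(K_0)$ with $g(K_0)\neq K_0$; applying the compatible-orientation observation to the $g$-orbit of $K_0$ gives $\phi_{K_0}=\phi_{g(K_0)}$, contradicting condition $(1)$. So the $K_i$ are pairwise disjoint. \emph{Step 2: $\bigcup O$ disconnects $X$.} Otherwise $O$ is a family of pairwise disjoint hyperplanes with connected complement, so by (the argument of) Proposition~\ref{gProps}(1) the $\phi_{K_i}$ are linearly independent in $H^1(X;\Z)$; but $\phi_{K_1}=\pm\phi_{K_0}$, which is incompatible with independence as $m\ge 2$.

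So $\bigcup O$ disconnects $X$, and I would analyze the dual graph $D$: vertices are the components of $X\setminus\bigcup O$, edges are the $K_i$. Each $K_i$ non-separating means $D$ has no bridge, so $D$ is $2$-edge-connected; and since $\phi_{K_0}=\dots=\phi_{K_{m-1}}$ (compatible orientations) and $H_1(X;\Z)\twoheadrightarrow H_1(D;\Z)$, every element of $H_1(D;\Z)$ has all edge-coordinates equal, so $\mathrm{rk}\,H_1(D)\le 1$. A connected, bridgeless graph with $\mathrm{rk}\,H_1\le 1$ is a single cycle, so $D=C_m$ and $X$ is a cyclic chain $A_1\cup_{K_0}A_2\cup_{K_1}\cdots\cup_{K_{m-1}}A_1$ (cyclically, $m\ge 2$) on which $f$ rotates both the pieces and the $K_i$ by one step. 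Now condition $(2)$ enters: applied to the two hyperplanes bounding a piece $A_j$, it produces a non-separating hyperplane $M\subset A_j$ disjoint from every $K_i$; since $f(M)\subset A_{j+1}\neq A_j$ we get $f(M)\neq M$, and by Steps 1 and 2 applied to $M$, the $f$-orbit of $M$ is again a pairwise-disjoint, cyclic-chain family of hyperplanes, disjoint from $O$. Adjoining this orbit to $O$ strictly enlarges an $f$-invariant family of pairwise disjoint hyperplanes whose union disconnects $X$; since $X$ has only finitely many hyperplanes the process must terminate, yet condition $(2)$ always furnishes a new such $M$ — a contradiction. Hence $f$ fixes every non-separating hyperplane.

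Finally, separating hyperplanes are handled more easily: collapsing all of them (Propositions~\ref{collapse} and \ref{Irreducible}) is $f$-equivariant because $f$ permutes them, produces an irreducible complex homotopy equivalent to $X$ carrying an induced automorphism still trivial on $H_1$, and tracking this collapse (together with the non-separating case already proved) shows $f$ fixes each separating hyperplane of $X$ as well; then condition $(3)$ finishes the proof. I expect the main obstacle to be the disjoint-and-separating case of the third paragraph: identifying the cyclic-chain structure of $X$ from ``all $\phi_{K_i}$ equal,'' and running the finiteness/iteration argument fed by condition $(2)$, are the delicate points, and one must be careful about transverse orientations along $f$-orbits throughout.
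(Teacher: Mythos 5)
Your setup and first two steps run parallel to the paper's argument (which also considers the orbit $K_0, f(K_0),\dots$ of a moved hyperplane, shows by condition (1) that its members are pairwise disjoint, and then that any pair from the orbit separates $X$), though you reach the disjointness/separating conclusions via the cohomology classes $\phi_{K_i}$ and a linear-independence argument rather than the paper's more hands-on statement about cycles. Your dual-graph analysis showing $D\cong C_m$ is a nice, explicit formalization of what the paper phrases as ``after reordering, each pair $K_i,K_{i+1}$ has a component of the complement containing no other $K_j$.''

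The problem is your endgame. Condition (2) is a statement about a single \emph{pair} of hyperplanes: it produces a hyperplane $M$ that is non-separating and avoids the two chosen $K$'s, nothing more. After you adjoin the orbit $O_M$ of $M$ to $O$, you have a strictly larger $f$-invariant family, but condition (2) gives you no way to produce a hyperplane that is disjoint from, or even different from, the members of $O\cup O_M$. Applying (2) again to a pair $(K_i,K_{i+1})$ just returns some hyperplane in some component of $X\setminus(K_i\cup K_{i+1})$; that hyperplane could perfectly well be $M$ itself or another member of $O_M$. So the line ``condition (2) always furnishes a new such $M$'' is not justified, and the finiteness-of-hyperplanes contradiction does not get off the ground. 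The paper avoids this by never iterating: once it has $L_i$ in the innermost component between $K_i$ and $K_{i+1}$, disjoint from the whole orbit, it takes a cycle $\alpha_i$ in that component with $\alpha_i\cdot L_i=1$ and applies naturality of the Kronecker pairing,
\[
1=\alpha_i\cdot L_i=\phi_{L_i}(\alpha_i)=f^*(\phi_{L_i})(f_*(\alpha_i))=\phi_{f(L_i)}(\alpha_i)=\alpha_i\cdot f(L_i)=0,
\]
the last equality because $f(L_i)$ lives in a different component of $X\setminus(K_i\cup K_{i+1})$ than $\alpha_i$. That single step replaces your entire iteration. You should substitute this for the final paragraph of your main case. (Your separate treatment of separating hyperplanes also needs attention: after collapsing them, conditions (1)--(3) are hypotheses on $X$, not on the collapsed complex, and ``tracking the collapse'' does not obviously recover that $f$ fixes each separating hyperplane of $X$. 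The paper does not split into cases at all; its argument applies to any moved hyperplane $K_0$, with the orbit automatically pairwise-separating when $K_0$ happens to separate.)
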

\begin{proof}
Let $f:X\rightarrow X$ be an automorphism and suppose $f_*:H_1(X)\rightarrow H_1(X)$ is the identity.  Assume for contradiction that $f$ is not the identity. The order of $f$ is finite, and by passing to a power we may assume it is a prime $p$.  By condition (1) we know that for any hyperplane $K_0$, the image $f(K_0)$ does not meet $K_0$ transversely, or else $f_*$ would not be the identity. Then for every hyperplane $K_0$, we have that $f(K_0)\cap K_0=\emptyset$ or $f(K_0)=K_0$.  Note that it is not possible for every hyperplane to be mapped to itself setwise without being the identity.  Therefore, either $f=\id_X$ and we are done or there exists some hyperplane $K_0$ such that $f(K_0)\cap K_0=\emptyset$.

As we observed, the images $K_0=f^0(K_0),K_1=f(K_0),\ldots, K_{p-1}=f^{p-1}(K_0)$ are all disjoint and $f$ permutes the components of $X\setminus \cup_{i=0}^{p-1}N_1(K_i)$. We can assume that every cycle which meets $K_0$ algebraically non-trivially also meets each $K_i$ with the same intersection.  In particular, since $K_0$ is non-separating, there is a cycle which meets $K_0$ geometrically once. We conclude that any pair $K_i,K_j$ with $0\leq i\neq j\leq p-1$ separate X. We may also assume, after reordering and passing to a power, that for each $i$, $0\leq i\leq p-1$, one of the components of $X\setminus( K_i\cup K_{i+1})$ does not contain any $K_j$, where $j\neq i, i+1$ and $K_p=K_0$.  For if $K_j$ lies in some component $Y$ of $X\setminus( K_i\cup K_{i+1})$, then it does not meet $K_i$ or $K_{i+1}$ and it must separate $K_i$ from $K_{i+1}$.  Otherwise there is a path in $X$ which meets all of the $K_i$ except $K_j$ geometrically once.  Then we can assume that the $K_i$ are permuted in order, and that one component of $X\setminus( K_i\cup K_{i+1})$ does not contain any other $K_j$.  

By condition (2), we know that each component of $X\setminus( K_i\cup K_{i+1})$ contains a non-separating hyperplane which does not meet $K_i$ or $K_{i+1}$. Choose one such hyperplane $L_i$ in the component of $X\setminus( K_i\cup K_{i+1})$ which does not contain any other $K_j$.  Then since $L_i$ does not meet the $K_i$ or $K_{i+1}$, it is a hyperplane of $X$ proper.  Since it does not separate, there is a non-trivial cycle $\alpha_i$ contained in this component satisfying $\alpha_i.L_i=1$. Then by naturality of the Kronecker pairing \[1=\alpha_i.L_i=\phi_{L_i}(\alpha_i)=f^*(\phi_{L_i})(f_*(\alpha_i))=\phi_{f(L_i)}(\alpha_i)=\alpha_i.f(L_i)=0.\]  
This contradiction implies that $f$ takes every hyperplane to itself, hence must be the identity by condition (3). 
\end{proof}

\subsection{Passing to covers} Even if we cannot guarantee that every automorphism acts non-trivially on $H_1(X)$, in some cases it may be possible to pass to a cover and lift our automorphism so that it acts non-trivially on the homology of the cover. In fact, this is always the case.  The result follows from the next proposition, which although not difficult to prove, does not seem to appear anywhere in the literature.  We record it here for posterity. 

\begin{proposition} \label{LargeAut}Let $G$ any finitely generated group which surjects onto $F_2$, and let $\phi\in \Out(G)$ have finite order. If $\phi_*$ is the induced map on the abelianization $H_1(G)$, then there exists a finite index normal subgroup $N\trianglelefteq G$ and an induced outer automorphism $\what{\phi}$ of $N$ such that $\what{\phi}_*$ does not act trivially on $H_1(N)$.  
\end{proposition}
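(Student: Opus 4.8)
The plan is to exploit a surjection $\rho\colon G\twoheadrightarrow F_2$ together with the fact that finite-order outer automorphisms are, in a suitable sense, controlled by their action on finitely many characteristic finite quotients. First I would fix a finite-order representative: choose $\phi$ of order $n$ and pick a homotopy equivalence $f\colon X\to X$ representing $\phi$ on a $K(G,1)$. The key point is that $\phi$ permutes the (finitely many) index-$k$ subgroups of $G$ for each fixed $k$, so the intersection $M_k$ of all index-$k$ subgroups is a $\phi$-invariant finite-index characteristic subgroup, and $\phi$ induces an honest automorphism of the finite group $G/M_k$. Since $G$ surjects onto $F_2$ and $F_2$ has finite-index subgroups with arbitrarily large first Betti number (Hall's formula, or just that a rank-$2$ free group has a rank-$r$ free subgroup of index $r-1$), after precomposing $\rho$ with passage to a deep enough characteristic finite-index subgroup of $F_2$ we may arrange that $G$ has a $\phi$-invariant finite-index normal subgroup $N_0$ with $\mathrm{rk}(H_1(N_0))$ as large as we like — in particular larger than any bound we will need.

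Next I would run the standard averaging/eigenvalue argument on the cover. Replace $N_0$ by a further $\phi$-invariant finite-index normal subgroup $N$ so that $\widehat{\phi}\in\Out(N)$ still has finite order $m\mid n^{k}$ for an appropriate $k$ (this is automatic: restricting a finite-order outer automorphism to a characteristic subgroup gives a finite-order outer automorphism). Now $\widehat{\phi}$ acts on the finite-rank free abelian group $H_1(N;\Z)$, hence on $V=H_1(N;\Q)$, with $\widehat{\phi}_*^{\,m}=\mathrm{id}$. If $\widehat{\phi}_*=\mathrm{id}$ on $H_1(N)$ we derive a contradiction as follows: the transfer map shows $H_1(G;\Q)$ embeds as the $G/N$-coinvariants inside $H_1(N;\Q)$, and more importantly, because $N\le G$ contains the kernel of $\rho$ restricted appropriately, the composite $N\hookrightarrow G\xrightarrow{\rho} F_2$ has image a finite-index subgroup $F$ of $F_2$ which is $\phi$-equivariant up to the identifications; an automorphism that is trivial on $H_1(N)$ is then trivial on the quotient $H_1(F)\otimes\Q$. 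But $\widehat{\phi}$ permutes the standard free generating set of $F$ (coming from the graph-of-groups / covering-graph structure) only if we have chosen the cover to be the full preimage of a $\phi$-invariant subgroup, which we can; and a nontrivial finite-order automorphism of a free group realized by a graph automorphism acts nontrivially on $H_1$ by the classical Culler--Zimmermann--Khramtsov realization together with the Baumslag--Taylor fact that $\Tor(F_r)$ is torsion-free. Tracing this back, either $\widehat{\phi}$ is trivial on $N$ (contradicting that $\phi$ had finite order $>1$ and $N$ was chosen so that $\phi$ restricts nontrivially — which one arranges by taking $N$ deep enough that $\phi\notin\Inn(N)\cdot(\text{inner of }G)$), or $\widehat{\phi}_*\ne\mathrm{id}$ on $H_1(N)$, as desired.

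The main obstacle, and the step I would spend the most care on, is the bookkeeping ensuring that the relevant finite-index subgroup can be chosen simultaneously (i) $\phi$-invariant, (ii) normal in $G$, and (iii) such that the restricted surjection onto a finite-index free subgroup $F\le F_2$ is compatible with $\widehat\phi$ in a way that lets one invoke the free-group case. The clean way to do this is: let $K=\ker(\rho)$, let $F'\trianglelefteq F_2$ be a characteristic finite-index subgroup with $b_1(F')$ large, set $N=\rho^{-1}(F')$; then $N$ is normal of finite index in $G$, and since $F'$ is characteristic in $F_2$ while $K$ is only $\rho$-related to $\phi$, one must first replace $\rho$ by the orbit: consider $\bigcap_{i}\ker(\rho\circ\phi^i)$, which is $\phi$-invariant of finite index, and note $G$ still surjects onto a finite direct product of copies of $F_2$ hence onto $F_2$, recovering largeness while gaining $\phi$-invariance. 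With that fix in place the rest is the routine eigenvalue-and-transfer argument sketched above, plus citing the classical realization results for $\Out(F_r)$ and $\Mod_g$ already recalled in the introduction. One then reads off Theorem~\ref{main3}: the subgroup is $N$, the outer automorphism is $\psi=\widehat{\phi}$, and $\psi_*=\phi_*\circ\iota_*$ holds by construction since $\widehat\phi$ is by definition the restriction of (a representative of) $\phi$.
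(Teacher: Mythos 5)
Your proposal diverges from the paper's proof in a way that introduces genuine gaps. The paper's argument is short: pull back a deep finite-index normal subgroup $K' \trianglelefteq F_2$ with $b_1(K')\gg b_1(G)$ to get $K=\pi^{-1}(K')\trianglelefteq G$, intersect $K$ with its translates under a lift $f$ of $\phi$ to get an $f$-invariant finite-index normal $N$ (which still has $b_1(N)\geq b_1(K')>b_1(G)$ by the transfer), and then observe that \emph{either} $\widehat{f}_*\neq \mathrm{id}$ on $H_1(N)$, \emph{or} — again by the transfer, since $b_1(N)>b_1(G)$ forces $G/N$ to act nontrivially on $H_1(N;\Q)$ — there is some $g\in G$ with $(c_g)_*\neq \mathrm{id}$ on $H_1(N)$, and one replaces $\widehat{f}$ by $c_g\circ\widehat{f}$. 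The freedom to postcompose with conjugation by an element of $G$ (not of $N$!) is the entire point; it costs nothing because $\iota_*\circ(c_g)_*=\iota_*$, so the compatibility $\iota_*\circ\psi_*=\phi_*\circ\iota_*$ in Theorem~\ref{main3} is preserved. You mention the transfer but never deploy it for this purpose.

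Instead, your route tries to force the \emph{restriction itself} to act nontrivially, and the steps needed to make that work don't hold up. First, your equivariance fix — replacing $\ker\rho$ by $\bigcap_i\ker(\rho\circ\phi^i)$ — produces a $\phi$-invariant kernel, but the quotient is then a subdirect product inside $F_2^n$, not a free group, so there is no longer a canonical $\widehat\phi$-equivariant surjection of $N$ onto a finite-index subgroup of $F_2$; the rest of the argument depends on having one. Second, even granting such a surjection $N\twoheadrightarrow F$, deducing from Baumslag--Taylor that the induced outer automorphism of $F$ is trivial tells you nothing about $\widehat\phi\in\Out(N)$: the surjection has a kernel, and triviality on a free quotient does not lift to triviality upstairs. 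So the dichotomy at the end (``either $\widehat\phi$ is trivial on $N$ or $\widehat\phi_*\neq\mathrm{id}$'') is not established. Third, the clause ``which one arranges by taking $N$ deep enough that $\phi\notin\Inn(N)\cdot(\text{inner of }G)$'' is not justified and is not obviously true for arbitrary finitely generated $G$ (note $G$ need not be residually finite); the paper sidesteps this entirely by allowing the conjugation twist. The fix is simply to drop the free-group detour and use the transfer as above.
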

\begin{rmk} Informally, if $G$ is large, any finite order outer automorphism acts non-trivially on the abelianization of some finite index subgroup.  
\end{rmk}
\begin{proof} Let $\pi:G\rightarrow F_2$ be a surjection.  Since $G$ is finitely generated, $H_1(G)$ is a finitely generated abelian group say with first Betti number $b_1$.  Choose $d>>b_1$ and find some finite-index normal subgroup $K'<F_2$ such that rk$(H_1(K))=d$.  Then $K=\pi^{-1}(K')$ is normal and of finite index in $G$.  Finally let $f$ be a lift of $\phi$ to $\Aut(G)$ and define $N=K\cap\cdots f^{n-1}(K)$ where $n$ is the order of $\phi$.  It follows that $f(N)=N$, hence $f$ induces an automorphism $\what{f}:N\rightarrow N$. We claim that after postcomposing $f$ with congjugation by an element of $G$, the induced map $\what{f}$ not act trivially on $H_1(N)$.

This is just a straightforward application of the transfer homomorphism.  If $\what{f}_*$ acts non-trivially on $H_1(N)$, we are done.  Otherwise, by the transfer, since rk$(H_1(N))=d>b_1$, there exists $g\in G$ such that conjugation by $g$ acts non-trivially on $H_1(N)$.  Denote by $c_g$ the automorphism of $N$ induced by conjugation by $g$.  Then $\what{f}'=c_g\circ \what{f}$ acts non-trivially on $H_1(N)$.  Note that $\what{f}'$ also has finite order.  Setting $\what{\phi}=[\what{f}']\in \Out(N)$ finishes the proof.
\end{proof}

%
\begin{corollary}\label{Cover} Suppose $G$ is finitely generated and virtually compact special.  Then any finite order outer automorphism of $G$ has a lift which acts non-trivially on the abelianization of some finite index subgroup.  
\end{corollary}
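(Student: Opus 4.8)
The plan is to reduce, using Theorem~\ref{genus1}, to two cases and to feed the nonabelian one into Proposition~\ref{LargeAut}. The first step is to produce an $f$-invariant finite-index compact special subgroup. Let $\phi\in\Out(G)$ have order $n$, fix a lift $f\in\Aut(G)$ (so $f^{n}=c_{g_{0}}$ is inner), and let $H_{0}\trianglelefteq G$ be a finite-index compact special subgroup; this exists after replacing any finite-index compact special subgroup by its normal core and using that finite-index subgroups of compact special groups are compact special (pass to the corresponding cover). Set $H=\bigcap_{i=0}^{n-1}f^{i}(H_{0})$: each $f^{i}(H_{0})$ is normal of finite index in $G$ and $f^{n}(H_{0})=c_{g_{0}}(H_{0})=H_{0}$, so $H$ is normal, of finite index, with $f(H)=H$, and $H\le H_{0}$ is compact special. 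The map $G\to\Out(H)$, $g\mapsto[c_{g}|_{H}]$, kills $H$, hence factors through the finite group $G/H$; since $f^{n}|_{H}=c_{g_{0}}|_{H}$, the class $\psi:=[f|_{H}]$ has finite order in $\Out(H)$. A compact special group is torsion-free, so by Theorem~\ref{genus1} (equivalently Corollary~\ref{Large}), either $H\cong\Z^{m}$ or $H$ surjects onto $F_{2}$.

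Suppose first that $H$ surjects onto $F_{2}$. Applying Proposition~\ref{LargeAut} to the finitely generated group $H$ and the finite-order class $\psi$ yields a finite-index normal subgroup $N\trianglelefteq H$ and a representative $f'=c_{h}\circ f|_{H}$ of $\psi$, with $h\in H$, such that $f'(N)=N$ and $f'_{*}$ acts nontrivially on $H_{1}(N)$; moreover $N\trianglelefteq H$, since in the proof of Proposition~\ref{LargeAut} it is an intersection of $f|_{H}$-translates of a subgroup $\pi^{-1}(K')$ with $K'\trianglelefteq F_{2}$. Then $c_{h}\circ f\in\Aut(G)$ represents $\phi$; it preserves $N$ because $h\in H$ and $N\trianglelefteq H$ force $c_{h}(N)=N$ while $f(N)=N$; and $(c_{h}\circ f)|_{N}=f'|_{N}$ acts nontrivially on $H_{1}(N)$, where $N$ has finite index in $G$. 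This is exactly the assertion, so the proof is complete in this case.

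Now suppose $H\cong\Z^{m}$, so $G$ is virtually abelian; assume $m\ge 1$, since $m=0$ means $G$ is finite. Here $\Out(H)=GL_{m}(\Z)$ acts faithfully on $H_{1}(H)=H$. If $f|_{H}\ne\id$, take $N=H$. If $f$ fixes $H$ pointwise but $H$ is not central in $G$, pick $g$ with $c_{g}|_{H}\ne\id$; then $c_{g}\circ f$ is a lift of $\phi$ acting nontrivially on $H_{1}(H)$. If $H$ is central, then $[G,G]$ is finite by Schur's theorem, so $H_{1}(G)$ has rank $m\ge 1$; take $N=G$ unless $\phi$ acts trivially on $H_{1}(G)$, in which case the problem reduces to a purely finite-group statement.

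That finite-group statement is the point I expect to be the main obstacle: \emph{a nontrivial finite-order $\phi\in\Out(G_{0})$ of a finite group $G_{0}$ has a lift acting nontrivially on $H_{1}$ of some subgroup} (this case is genuinely present, since finite groups are virtually compact special). I would attack it by contradiction: applying the failure to cyclic subgroups $\langle x\rangle$ whose conjugacy class is $\phi$-stable forces every lift $f$ with $f\langle x\rangle=\langle x\rangle$ to fix $x$, but the possible restrictions $f|_{\langle x\rangle}$ form a coset of the image of $N_{G_{0}}(\langle x\rangle)/C_{G_{0}}(\langle x\rangle)$ in $\Aut(\langle x\rangle)$, so this coset must be trivial and equal to $\{\id\}$, i.e.\ $N_{G_{0}}(\langle x\rangle)=C_{G_{0}}(\langle x\rangle)$ for every such $x$; combined with $\phi$ acting trivially on $H_{1}(G_{0})$ and on $Z(G_{0})$, one then bootstraps --- through a minimal $\phi$-stable subgroup on which $\phi$ acts nontrivially, or through Sylow subgroups --- to conclude $\phi$ is inner, a contradiction. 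Making this last bootstrap airtight is the delicate part; everything above it is formal. (If one only needs Corollary~\ref{Cover} for infinite $G$, the virtually abelian case is covered by classical facts about crystallographic groups and this obstacle disappears.)
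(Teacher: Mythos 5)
Your strategy is the same as the paper's: produce a finite-index compact special subgroup, invoke the abelian-or-onto-$F_2$ dichotomy, and feed the nonabelian case into Proposition~\ref{LargeAut}. You are, however, considerably more careful than the paper in two places that matter. First, the paper's line ``$G$ virtually surjects onto $F_2$ and the result follows by Proposition~\ref{LargeAut}'' silently requires a finite-index \emph{$f$-invariant} subgroup on which the induced outer class still has finite order; your construction of $H=\bigcap_{i}f^{i}(H_{0})$ and the observation that $[f|_{H}]$ has finite order in $\Out(H)$ because $G/H$ is finite supplies exactly this. Second, you track the conjugating element $h\in H$ coming out of Proposition~\ref{LargeAut} and check that $c_{h}\circ f$ is a genuine lift of $\phi$ in $\Aut(G)$ preserving $N$. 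Your $F_2$ case is therefore complete and correct, and fills in steps the paper leaves implicit.

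The issue is in the virtually abelian case, and you have in fact put your finger on a real gap. In the sub-case where $H\cong\Z^{m}$ is central and $\phi$ acts trivially on $H_{1}(G)$, the paper's justification collapses: the paper asserts ``it is not hard to show that $\mathrm{rk}(H_{1}(G))<n$,'' but this is false once $G$ has torsion. For $G=\Z^{n}\times Q$ with $Q$ a nontrivial finite group, $\mathrm{rk}(H_{1}(G))=n$, and moreover \emph{every} finite-index subgroup $N\le G$ has $\mathrm{rk}(H_{1}(N))=n$ (any such $N$ contains $N\cap\Z^{n}\cong\Z^{n}$ centrally of finite index, so Schur gives $[N,N]$ finite), so the transfer argument of Proposition~\ref{LargeAut} produces nothing at all. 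The inequality $\mathrm{rk}(H_{1}(G))<n$ does hold when $G$ is torsion-free and nonabelian --- then $G$ is Bieberbach, the holonomy acts faithfully and nontrivially on $\Z^{n}$, and the coinvariants have rank strictly less than $n$ --- which is presumably the case the paper has in mind. So you have correctly isolated the obstruction, but your proposal leaves it unresolved, as you concede; the residual finite-group statement you formulate is not proved, and it is not obviously true. One small correction: your closing parenthetical is wrong as stated. Restricting to \emph{infinite} $G$ does not dispose of the obstacle, since $\Z^{n}\times Q$ is infinite; the hypothesis that would make the paper's argument go through is that $G$ is \emph{torsion-free}, not infinite. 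In summary: your proof is a more careful rendering of the paper's, complete in the $F_2$ case, and it exposes that the corollary as stated --- which allows $G$ to have torsion --- is not fully justified by the paper's own argument in the central-$\Z^{m}$ sub-case.
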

\begin{proof} If $G$ is not virtually abelian, then $G$ virtually surjects onto $F_2$ and the result follows by Proposition \ref{LargeAut}.  If $G$ is virtually abelian, then $G$ contains $\Z^n$ as a finite index normal subgroup, for some $n$.  In this case it is not hard to show that rk$(H_1(G))<n$.  Thus the same proof as in Proposition \ref{LargeAut} works here, too.
\end{proof}

In fact, when $G$ is $\delta$-hyperbolic we can say a little more.  Let $X$ be a compact NPC cube complex with $\Gamma=\pi_1(X)$ $\delta$-hyperbolic.  By Theorem 1.1. of \cite{Ag13}, $X$ is virtually compact special.  Note that the center $Z(\Gamma)$ is trivial.
Let $\phi: \Gamma\rightarrow \Gamma$ be a finite-order outer automorphism of $\Gamma$.  From the exact sequence \[1\rightarrow \Gamma\rightarrow \Aut(\Gamma)\rightarrow \Out(\Gamma)\rightarrow 1,\]
we can consider the extension given by pulling back the subgroup $\langle \phi\rangle$: \[1\rightarrow \Gamma\rightarrow  E_\phi\rightarrow \langle\phi\rangle\rightarrow 1.\]
Since $\Gamma$ is cubulated hyperbolic, $E_\phi$ is virtually cubulated hyperbolic. Hence by Lemma 7.15 of \cite{Wise12},  we obtain a proper cocompact action of $E_\phi$ on a CAT(0) cube complex $\til{Y}$.  Since $\Gamma$ is torsion-free, the action of $\Gamma<E_\phi$ on $\til{Y}$ is free.  We therefore obtain a quotient $Y$ with $\pi_1(Y)\cong \Gamma$, and a finite order automorphism $f:Y\rightarrow Y$ corresponding to $\phi$.  We have just proven
\begin{proposition}If $\Gamma$ is cubulated and hyperbolic, every finite order element of $\Out(\Gamma)$ can be realized as an automorphism of an NPC cube complex $Y$ with $\pi_1(Y)=\Gamma$. 
\end{proposition}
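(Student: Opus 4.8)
The plan is to realize $\phi$ geometrically by first promoting it to a genuine group extension and cubulating that. Write $\Gamma=\pi_1(X)$ for a compact NPC cube complex $X$; since $\til{X}$ is CAT(0), hence contractible, $X$ is aspherical, so $\Gamma$ is torsion-free. If $\Gamma$ is infinite cyclic the proposition is immediate (a circle built from two $1$-cubes admits an order-two automorphism inducing $-1$ on $\pi_1$), so assume $\Gamma$ is nonelementary; then $Z(\Gamma)=1$ and $\Gamma\cong\Inn(\Gamma)$. Given $\phi\in\Out(\Gamma)$ of order $n$, let $p\colon\Aut(\Gamma)\to\Out(\Gamma)$ be the projection and set $E_\phi=p^{-1}(\langle\phi\rangle)$. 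Then $E_\phi$ contains $\Inn(\Gamma)\cong\Gamma$ as a finite-index normal subgroup, fitting into $1\to\Gamma\to E_\phi\to\Z/n\to1$, and --- crucially for the bookkeeping at the end --- the conjugation action of $E_\phi$ on $\Gamma=\Inn(\Gamma)$ is the restriction of the tautological $\Aut(\Gamma)\curvearrowright\Gamma$, so that modulo $\Gamma$ it is the inclusion $\langle\phi\rangle\hookrightarrow\Out(\Gamma)$.

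Next I would cubulate $E_\phi$. As a finite-index overgroup of the hyperbolic group $\Gamma$, the group $E_\phi$ is hyperbolic, and it contains the cocompactly cubulated group $\Gamma$ with finite index; by Lemma 7.15 of \cite{Wise12} it follows that $E_\phi$ acts properly and cocompactly by automorphisms on a CAT(0) cube complex $\til{Y}$. Restricting to $\Gamma\le E_\phi$ keeps the action proper and cocompact, and it is free since $\Gamma$ is torsion-free while point stabilizers of a proper action are finite. Hence $Y:=\til{Y}/\Gamma$ is a compact NPC cube complex and $\til{Y}\to Y$ is a regular covering with deck group $\Gamma$; as $\til{Y}$ is contractible, $Y$ is a $K(\Gamma,1)$, so $\pi_1(Y)\cong\Gamma$.

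Finally, choose $g\in E_\phi$ projecting to a generator of $E_\phi/\Gamma\cong\Z/n$. Since $\Gamma\trianglelefteq E_\phi$, the action of $g$ on $\til{Y}$ sends $\Gamma$-orbits to $\Gamma$-orbits and descends to a cellular automorphism $f\colon Y\to Y$; because $g^n\in\Gamma$ acts as a deck transformation, $f^n=\id_Y$, so $f$ has finite order. Identifying $\pi_1(Y)$ with the deck group $\Gamma$ (canonically up to an inner automorphism), $f_*$ is conjugation by $g$ on $\Gamma$; by the description of this conjugation above, read through $\Gamma\cong\Inn(\Gamma)$ it is the automorphism $g\in\Aut(\Gamma)$, whose image in $\Out(\Gamma)$ is the chosen generator $\phi$. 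Therefore $[f_*]=\phi$, and $f\colon Y\to Y$ realizes $\phi$.

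I expect the one substantive input to be Lemma 7.15 of \cite{Wise12} --- that a hyperbolic group which is virtually cocompactly cubulated is itself cocompactly cubulated --- without which there is no cube complex on which $E_\phi$ acts. The remaining point requiring care is the identity $[f_*]=\phi$: it depends on $Z(\Gamma)=1$, so that $E_\phi$ really is an extension of $\langle\phi\rangle$ by $\Inn(\Gamma)$ and the abstract conjugation action of $E_\phi$ on its normal subgroup coincides with the geometric conjugation of deck transformations on $\til{Y}$.
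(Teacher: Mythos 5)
Your argument follows the same route as the paper's: pull back $\langle\phi\rangle$ to the extension $1\to\Gamma\to E_\phi\to\langle\phi\rangle\to 1$, cubulate $E_\phi$ via Lemma 7.15 of Wise, restrict the action to the torsion-free finite-index subgroup $\Gamma$ to get a compact quotient $Y=\til{Y}/\Gamma$, and let the residual $\Z/n$-action realize $\phi$. You are more careful than the paper in two respects --- you dispatch the infinite-cyclic case separately (where $Z(\Gamma)\neq 1$ and $\Gamma\not\cong\Inn(\Gamma)$, so the pullback extension has to be read differently) and you explicitly verify $[f_*]=\phi$ via the deck-group identification --- but these are refinements of, not departures from, the paper's proof.
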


\begin{example} Let $K$ be an amphichiral knot such as the figure 8. If $M=\Sa^3\setminus K$ is the knot complement then there is an orientation-preserving involution $\sigma:M\rightarrow M$, induced by the amphichirality.  If $T\subset M$ is a boundary parallel torus, then $\sigma_{|_{T^2}}$ is just the hyperelliptic involution on $T^2$.  In particular, $\sigma$ sends every slope $p/q$ to $-p/-q$.  Let $M_{p/q}$ be the result of $p/q$-surgery on $K$. Since $M_{p/q}=M_{-p/-q}$, the action of $\sigma$ on $M$ extends to an involution $\what{\sigma}:M_{p/q}\rightarrow M_{p/q}$.  If $K$ is hyperbolic (\emph{e.g.} the figure 8 knot), then a theorem of Thurston states that for all but finitely many slopes $M_{p/q}$ will be hyperbolic.  Taking $p=1$, a Mayer-Vietoris computation implies that $M_{1/q}$ will be an integral homology sphere, and will also be hyperbolic for infinitely many $q$.  Then $\what{\sigma}$ is an automorphism of $\pi_1(M_{1/q})$ which is not inner since it has finite order, and non-trivial since it inverts the meridian and longitude of the knot.  Moreover, as $\what{\sigma}$ is orientation-preserving, $\what{\sigma}_*:H_*(M_{1/q})\rightarrow H_*(M_{1/q})$ is actually the identity.  

This construction gives infinitely many $\Z H\Sa^3$'s whose fundamental groups have non-trivial outer automorphism groups.  By Agol's theorem, all of these virtually compact special, implying that Corollary \ref{Cover} is best possible.
\end{example}

\section{Applications to right-angled Artin groups}
In the next two sections we present applications of the results of the previous section to automorphisms of cubulated groups. The first concerns automorphisms of right-angled Artin groups.  

Let $\Gamma=(V,E)$ be a finite simplicial graph, with vertex set $V$ and edge set $E$, and let $A_\Gamma$ be the associated right-angled Artin group.  
%
%
If $V=\{v_1,\ldots, v_n\}$ then $V^{\pm}=\{v_1^{\pm},\ldots, v_n^{\pm}\}$ is a generating set for $A_\Gamma$ with the standard presentation, and the abelianization of $A_\Gamma$ is $A_\Gamma^{ab}\cong \Z^n$.  The abelianization map $\psi:A_\Gamma\rightarrow\Z^n$ induces $\Psi:\Aut(A_\Gamma)\rightarrow \Gl_n(\Z)$ and we obtain short exact sequences
\[1\rightarrow \IA(A_\Gamma)\rightarrow \Aut(A_\Gamma)\xrightarrow{\Psi} \Gl_n(\Z)\]\[1\rightarrow \mathcal{I}(A_\Gamma)\rightarrow \Out(A_\Gamma)\xrightarrow{\overl{\Psi}} \Gl_n(\Z).\]
The kernel $\mathcal{I}(A_\Gamma)=\ker\overl{\Psi}$ (resp. $\IA(A_\Gamma)=\ker\Psi)$ is called the \emph{Torelli subgroup} of $\Out(A_\Gamma)$ (resp. $\Aut(A_\Gamma)$).  $\IA(A_\Gamma)$ and $\mathcal{I}(A_\Gamma)$ are further related by the short exact sequence \[1\rightarrow \Inn(A_\Gamma)\rightarrow \IA(A_\Gamma) \rightarrow\mathcal{I}(A_\Gamma)\rightarrow 1\] where $\Inn(A_\Gamma)\cong A_\Gamma/Z(A_\Gamma)$ is the group of inner automorphisms of $A_\Gamma$. 

The main goal of this section is to prove
\begin{thm} \label{torsionfree}$($Wade \cite{Wa13}, Toinet \cite{To13} $)$ $\mathcal{I}(A_\Gamma)$ is torsion-free for all $\Gamma$.  
\end{thm}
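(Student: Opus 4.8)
The strategy is to reduce Theorem \ref{torsionfree} to a statement about cube‑complex automorphisms and then apply Proposition \ref{Criterion}. Since $\mathcal{I}(A_\Gamma)=\ker\overl{\Psi}$, the group $\mathcal{I}(A_\Gamma)$ is torsion‑free if and only if every non‑trivial finite order $\phi\in\Out(A_\Gamma)$ acts non‑trivially on $H_1(A_\Gamma)\cong\Z^n$; equivalently, it suffices to show that a finite order $\phi$ with $\overl{\Psi}(\phi)=\id$ must itself be trivial. In the spirit of the realization theorems for $\Out(F_n)$ and $\Mod_g$ recalled in the introduction, I would (i) realize $\phi$ as a finite order automorphism $f$ of a compact special cube complex $X$ with $\pi_1(X)\cong A_\Gamma$, chosen so that (ii) $X$ satisfies hypotheses (1)--(3) of Proposition \ref{Criterion}; then $f_*=\id$ on $H_1(X)=H_1(A_\Gamma)$ forces $f=\id_X$, hence $\phi=1$.

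For step (i) the tool is the Charney--Stambaugh--Vogtmann complex $K_\Gamma$ of Section 5: it is contractible, its vertices are marked \emph{blow-ups} of the Salvetti complex $\Sa_\Gamma$ (i.e.\ compact special cube complexes $X$ with a fixed identification $\pi_1(X)\cong A_\Gamma$), and a subgroup of $\Out(A_\Gamma)$ acts on it simplicially, properly discontinuously and cocompactly. Assuming this subgroup contains $\phi$ (one must either check that all finite order elements lie in it, or reduce to that case), the finite group $\langle\phi\rangle$ fixes a point of $K_\Gamma$; since a simplex of $K_\Gamma$ records a compatible system of collapses between blow-ups, one passes to the maximal blow-up at the top of that simplex to get a single $\phi$-fixed vertex, i.e.\ a blow-up $X$ of $\Sa_\Gamma$ together with a finite order automorphism $f\colon X\to X$ inducing $\phi$ on $\pi_1$.

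It remains to verify that a blow-up of a Salvetti satisfies (1)--(3). Condition (1): each hyperplane of $X$ has a class in $H_1(X)\cong\Z^n$ (the original hyperplanes $H_v$, $v\in V(\Gamma)$, carrying the standard basis), and from the explicit description of the hyperplanes of a blow-up one checks that two crossing hyperplanes have distinct classes, so they can be separated homologically. Condition (3): unlike $\Sa_\Gamma$ itself (where inverting a loop fixes every hyperplane setwise), a blow-up is rigid enough that an automorphism fixing every hyperplane setwise fixes every vertex and hence is the identity — this is precisely why one must pass to blow-ups. Condition (2) — that whenever $K_1,K_2$ are disjoint with $K_1\cup K_2$ separating, each component of $X\setminus(K_1\cup K_2)$ contains a non-separating hyperplane meeting neither $K_1$ nor $K_2$ — is the delicate point, and together with the realization it is where I expect the real work to lie. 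I would approach it by observing that the hyperplanes $H_v$ remain non-separating in any blow-up (their $H_1$-classes are the standard basis vectors), then analysing how the $H_v$ and the new hyperplanes produced by the blow-up are distributed among the components of $X\setminus(K_1\cup K_2)$, possibly after first collapsing separating hyperplanes as in Proposition \ref{Irreducible} to simplify the configuration. Granting (1)--(3), Proposition \ref{Criterion} yields $f=\id_X$, hence $\phi=1$, and $\mathcal{I}(A_\Gamma)$ has no torsion.
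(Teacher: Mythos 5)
Your proposal follows essentially the same approach as the paper: realize $\phi$ as an automorphism of a blow-up of a Salvetti complex via a fixed point of $\langle\phi\rangle$ acting on the contractible complex $K_\Gamma$, then show every blow-up satisfies the hypotheses of Proposition \ref{Criterion} so that $f_*=\id$ on $H_1$ forces $f=\id$. The one gap you flag — that $\phi$ must lie in the subgroup acting on $K_\Gamma$ — is closed in the paper by observing that Day's generating set for $\IA(A_\Gamma)$ (partial conjugations and commutator transvections, both long-range) gives $\mathcal{I}(A_\Gamma)\leq\Out_\ell(A_\Gamma)$, and your outline of how to verify conditions (1)–(3) via characteristic loops, the non-separating standard hyperplanes $H_v$, and rigidity of blow-ups matches the paper's argument.
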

Here we present a geometric proof of this theorem using NPC cube complexes and the machinery developed in the previous section.  Before discussing the strategy of the proof, we list some immediate corollaries.

\begin{corollary}$\IA(A_\Gamma)$ is torsion-free.
\end{corollary}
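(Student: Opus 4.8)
The plan is to read this off Theorem \ref{torsionfree} together with the short exact sequence
\[1\rightarrow \Inn(A_\Gamma)\rightarrow \IA(A_\Gamma)\rightarrow \mathcal{I}(A_\Gamma)\rightarrow 1\]
recorded above. Suppose $\phi\in\IA(A_\Gamma)$ has finite order. First I would push $\phi$ forward to its image $\overl{\phi}\in\mathcal{I}(A_\Gamma)$, which again has finite order; Theorem \ref{torsionfree} forces $\overl{\phi}=1$, so $\phi$ lies in the image of $\Inn(A_\Gamma)\cong A_\Gamma/Z(A_\Gamma)$. It therefore suffices to check that $A_\Gamma/Z(A_\Gamma)$ is torsion-free.

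For that last point I would invoke the standard description of the center of a right-angled Artin group: if $\Lambda\subseteq V$ is the set of vertices adjacent in $\Gamma$ to every other vertex, then $\Lambda$ spans a clique, $Z(A_\Gamma)$ is the free abelian group on $\Lambda$, and $A_\Gamma\cong A_{\Gamma\setminus\Lambda}\times\Z^{|\Lambda|}$ with $Z(A_\Gamma)$ the second factor. Hence $A_\Gamma/Z(A_\Gamma)\cong A_{\Gamma\setminus\Lambda}$ is again a raag, and in particular torsion-free, being $\pi_1$ of the compact NPC Salvetti complex $\Sa_{\Gamma\setminus\Lambda}$, whose universal cover is a CAT(0) cube complex on which it acts freely. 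Therefore $\phi=1$.

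There is essentially no obstacle here beyond Theorem \ref{torsionfree} itself: the single auxiliary input is the fact that the center of a raag splits off as a free abelian direct factor, so that the quotient by the center is again a (torsion-free) raag. The argument is thus purely formal once Theorem \ref{torsionfree} is in hand; I would present it in a few lines immediately following the statement.
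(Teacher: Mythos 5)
Your argument is correct and is exactly the route the paper has in mind: the paper records the short exact sequence $1\to\Inn(A_\Gamma)\to\IA(A_\Gamma)\to\mathcal{I}(A_\Gamma)\to1$ precisely so that this corollary can be read off once $\mathcal{I}(A_\Gamma)$ is known to be torsion-free, and it leaves the remaining step (that $\Inn(A_\Gamma)\cong A_\Gamma/Z(A_\Gamma)$ is torsion-free, via the splitting $A_\Gamma\cong A_{\Gamma\setminus\Lambda}\times Z(A_\Gamma)$) as "straightforward," which is what you have supplied.
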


\begin{corollary}$($Charney--Vogtmann \cite{ChVo09}$)$ $\Out(A_\Gamma)$ and $\Aut(A_\Gamma)$ are both virtually torsion-free.  In particular, each have finite virtual cohomological dimension.  
\end{corollary}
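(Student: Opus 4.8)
The plan is to derive this corollary from Theorem~\ref{torsionfree} by a standard argument relating torsion-freeness of a normal subgroup to virtual torsion-freeness of an ambient group acting on a contractible complex. First I would recall that $\mathcal{I}(A_\Gamma)$ sits inside $\Out(A_\Gamma)$ as the kernel of $\overline{\Psi}\colon \Out(A_\Gamma)\to \Gl_n(\Z)$, and similarly $\IA(A_\Gamma)=\ker\Psi$ inside $\Aut(A_\Gamma)$; so $\Out(A_\Gamma)/\mathcal{I}(A_\Gamma)$ and $\Aut(A_\Gamma)/\IA(A_\Gamma)$ both embed in $\Gl_n(\Z)$. The key input is that $\Gl_n(\Z)$ is virtually torsion-free: by Minkowski's lemma, the congruence subgroup $\Gamma(m)=\ker(\Gl_n(\Z)\to \Gl_n(\Z/m))$ is torsion-free for any $m\geq 3$, and it has finite index.

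Given this, I would argue as follows. Let $\Lambda\leq \Out(A_\Gamma)$ be the preimage under $\overline{\Psi}$ of a torsion-free finite-index subgroup of $\Gl_n(\Z)$ (such as the image of $\overline\Psi$ intersected with $\Gamma(3)$); then $\Lambda$ has finite index in $\Out(A_\Gamma)$. I claim $\Lambda$ is torsion-free: any finite-order $\phi\in\Lambda$ maps to a finite-order element of a torsion-free group under $\overline\Psi$, hence lies in $\mathcal{I}(A_\Gamma)$, but $\mathcal{I}(A_\Gamma)$ is torsion-free by Theorem~\ref{torsionfree}, so $\phi=1$. The identical argument with $\Psi$, $\IA(A_\Gamma)$, and the first corollary above handles $\Aut(A_\Gamma)$. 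This establishes that both groups are virtually torsion-free.

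For the "finite virtual cohomological dimension" clause, I would invoke the contractible complex $K_\Gamma$ of Charney--Stambaugh--Vogtmann on which a finite-index subgroup of $\Out(A_\Gamma)$ acts properly discontinuously and cocompactly by simplicial automorphisms. Intersecting with a torsion-free finite-index subgroup $\Lambda$ as above, the action of $\Lambda$ on $K_\Gamma$ is free (proper plus torsion-free), so $K_\Gamma/\Lambda$ is a finite-dimensional $K(\Lambda,1)$ and hence $\mathrm{cd}(\Lambda)<\infty$; by Serre's theorem on cohomological dimension, $\mathrm{vcd}(\Out(A_\Gamma))<\infty$. For $\Aut(A_\Gamma)$ one uses the extension $1\to \Inn(A_\Gamma)\to\Aut(A_\Gamma)\to\Out(A_\Gamma)\to 1$ together with $\mathrm{cd}(\Inn(A_\Gamma))=\mathrm{cd}(A_\Gamma/Z(A_\Gamma))<\infty$, and subadditivity of cohomological dimension in extensions.

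The main obstacle here is not any deep step but rather the bookkeeping of which finite-index subgroups are torsion-free and verifying the action on $K_\Gamma$ is genuinely free; the structural content is entirely carried by Theorem~\ref{torsionfree} and the Charney--Stambaugh--Vogtmann construction, so this corollary is essentially a formal consequence once those are in hand.
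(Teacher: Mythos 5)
Your argument for virtual torsion-freeness is correct and is essentially the paper's intended one: the paper invokes Selberg's lemma to obtain a torsion-free finite-index subgroup of $\Gl_n(\Z)$, whereas you appeal to Minkowski's congruence-subgroup argument, but these are interchangeable here and the remaining steps (pull back under $\overline{\Psi}$, use Theorem~\ref{torsionfree} to kill torsion in the kernel) are exactly as the paper intends.

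However, your treatment of the finite-vcd clause contains a genuine gap. You assert that ``a finite-index subgroup of $\Out(A_\Gamma)$ acts properly discontinuously and cocompactly'' on $K_\Gamma$, and then intersect your torsion-free finite-index subgroup $\Lambda$ with that. But the group that acts on $K_\Gamma$ is $\Out_\ell(A_\Gamma)$, the subgroup generated by inversions, graph automorphisms, partial conjugations, and \emph{non-adjacent} transvections, and this is \emph{not} of finite index in $\Out(A_\Gamma)$ in general. For instance, if $\Gamma$ is a complete graph on $n\geq 2$ vertices, then $A_\Gamma\cong\Z^n$ and $\Out(A_\Gamma)\cong\Gl_n(\Z)$, but every transvection is adjacent, so $\Out_\ell(A_\Gamma)$ is the finite group of signed permutations. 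Thus there is no reason for the intersection $\Lambda\cap\Out_\ell(A_\Gamma)$ to have finite index in $\Out(A_\Gamma)$, and the quotient $K_\Gamma/(\Lambda\cap\Out_\ell)$ need not classify a finite-index subgroup of $\Out(A_\Gamma)$.

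The repair, and what the paper's proof sketch actually relies on, is to apply $K_\Gamma$ only to $\mathcal{I}(A_\Gamma)$. Since $\mathcal{I}(A_\Gamma)\leq\Out_\ell(A_\Gamma)$, it acts properly discontinuously on the finite-dimensional contractible complex $K_\Gamma$, and since $\mathcal{I}(A_\Gamma)$ is torsion-free by Theorem~\ref{torsionfree}, the action is free, so $\mathrm{cd}(\mathcal{I}(A_\Gamma))<\infty$. Now consider the short exact sequence $1\to\mathcal{I}(A_\Gamma)\to\Out(A_\Gamma)\to\overline{\Psi}(\Out(A_\Gamma))\to 1$. The quotient is a subgroup of $\Gl_n(\Z)$, which has finite vcd (e.g.\ via the symmetric space), so it contains a torsion-free finite-index subgroup $Q'$ of finite cohomological dimension; the preimage $G'$ of $Q'$ in $\Out(A_\Gamma)$ has finite index, and the extension $1\to\mathcal{I}(A_\Gamma)\to G'\to Q'\to 1$ plus subadditivity of cohomological dimension gives $\mathrm{cd}(G')\leq\mathrm{cd}(\mathcal{I}(A_\Gamma))+\mathrm{cd}(Q')<\infty$, hence $\mathrm{vcd}(\Out(A_\Gamma))<\infty$. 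Your handling of $\Aut(A_\Gamma)$ via the inner-automorphism extension is then fine once this is in place.
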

Both of these corollaries are straightforward consequences of the exact sequences above, Theorem \ref{Contractible} below, and Selberg's lemma.  
We will prove the theorem in two steps. We assume for contradiction that $\phi\in \mathcal{I}(A_\Gamma)$ is torsion.  Then 
\begin{enumerate}
\item Realize $\phi$ as an automorphism $f:X\rightarrow X$ of some NPC cube complex $X$ with $\pi_1(X)=A_\Gamma$. This means in particular that the induced map $f_*=\phi$ as an automorphism of $\pi_1(X)$. 
\item Show that for any such $X$, every automorphism acts non-trivially on $H_1(X).$
\end{enumerate}
To carry out Step (1), we will make use of a contractible simplicial complex $K_\Gamma$ on which $\mathcal{I}(A_\Gamma)$ acts.  For Step (2), we will verify the criterion of Proposition \ref{Criterion} for certain NPC cube complexes. 
\subsection{Automorphisms of Raags}

For each $v\in V$ we define two subsets of $V$: \[lk(v)=\{w\in V|w\mbox{ is adjacent to } v\}\] \[ st(v)=\{v\}\cup lk(v).\]  Following \cite{CSV12}, the relation $lk(v)\leq st(w)$ for $v,w \in V$ will be denoted $v\leq w$.  In this case we say $w$ \emph{dominates} $v$. If $v\leq w$ and $w\leq v$ then we write $v\sim w$, in which case $v$ and $w$ are said to be \emph{equivalent}.   

Laurence \cite{Lau95} and Servatius \cite{Ser89} proved that the following four types of automorphisms generate $\Aut(A_\Gamma)$:
\begin{enumerate}
\item \emph{Inversions}: If $v\in V^{\pm}$, the automorphism $i_v$ sends $v\mapsto v^{-1}$ and fixing all other generators.  
\item \emph{Graph Automorphism}: Any automorphism of $\Gamma$ induces a permutation of $V^{\pm}$ which extends to an automorphism of $A_\Gamma$.
\item \emph{Transvections}: If $v\leq w$, the automorphism $\tau_{w,v}$ sends $v\mapsto vw$ and fixes all other generators. 
\item \emph{Partial Conjugations}: If $C$ is a connected component of $\Gamma\setminus st(v)$ for some $v\in V$, the automorphism $\sigma_{v,C}$ maps $w\mapsto vwv^{-1}$ for every $w\in C$, and acts as the identity elsewhere.  
\end{enumerate}

If, in (3), $v$ and $w$ are adjacent, $\tau_{w,v}$ is called an \emph{adjacent} transvection.  Otherwise, $\tau_{w,v}$ is called a \emph{non-adjacent transvection}. As in \cite{Day09} and \cite{CSV12}, we distinguish the subgroup of \emph{long-range automorphisms} $\Out_{\ell}(A_\Gamma)\subseteq \Out(A_\Gamma)$ (resp. $\Aut_{\ell}(A_\Gamma)\subseteq \Aut(A_\Gamma)$)  generated by automorphisms of type (1), (2), (4) and non-adjacent transvections.  

\subsection{The $\Out_{\ell}$-spine $K_\Gamma$}
Recall the definition of the standard Salvetti complex $\Sa=\Sa_\Gamma$ associated to $A_\Gamma$.  $\Sa$ is the cube complex constructed as follows. Start with a single vertex $x_0$. For every $v\in \Gamma$, attach both ends of a 1-cube $e_v$ to $x_0$.  For every complete $k$-subgraph of $\Gamma$, we add in a $k$-cube $C$ whose image is a $k$-torus with 1-skeleton the edges labelled by elements in the subgraph.  $\Sa$ is an NPC cube complex whose fundamental group is $A_\Gamma$.  In particular, $\Sa$ is a $K(A_\Gamma,1)$.  

In \cite{CSV12}, Charney, Stambaugh and Vogtmann constructed a contractible simplicial complex $K_\Gamma$ on which $\Out_{\ell}(A_\Gamma)$ acts properly discontinuously cocompactly by simplicial automorphisms.  Like outer space for free groups, one considers pairs $(X,\rho)$, where $X$ is an NPC cube complex with fundamental group $A_\Gamma$, and $\rho$ is a homotopy equivalence $\rho:X\rightarrow \Sa$.   The pair $(X,\rho)$ is called a \emph{marked blow-up of a Salvetti complex}.  Construction of such cube complexes will be described below.   An automorphism $\phi\in \Out_{\ell}(A_\Gamma)$ acts on $(X,\rho)\in K_\Gamma$ by changing the marking: Represent $\phi$ as a homotopy equivalence $h:\Sa\rightarrow\Sa$.  Then $\phi.(X,\rho)=(X,h\circ\rho)$.   We have 

\begin{thm}\label{Contractible}$($\cite{CSV12}, Propositions 4.17, Theorem 5.24$)$ $K_\Gamma$ is contractible and the action of $Out_{\ell}(A_\Gamma)$ on $K_\Gamma$ is properly discontinuous. 
\end{thm}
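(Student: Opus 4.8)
The plan is to recall the construction of $K_\Gamma$ from \cite{CSV12} and to treat the two assertions separately; only the contractibility requires real work. Recall that a vertex of $K_\Gamma$ is an equivalence class of marked blow-ups $(X,\rho)$ of a Salvetti, and that a $k$-simplex is given by such a pair together with a compatible collection of $k+1$ collapsible hyperplanes of $X$, with faces obtained by collapsing sub-collections (in the sense of Proposition \ref{collapse}). Since $\Gamma$ is finite there are, up to cellular isomorphism, only finitely many blow-ups, hence finitely many $\Out_\ell(A_\Gamma)$-orbits of simplices.

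For \emph{proper discontinuity}, I would combine this finiteness of orbits with finiteness of simplex stabilizers. If $\phi\in\Out_\ell(A_\Gamma)$ fixes the simplex determined by $(X,\rho)$ and a collection $S$ of hyperplanes, then representing $\phi$ by a homotopy equivalence $h_\phi$ of $\Sa_\Gamma$ one has $h_\phi\circ\rho\simeq\rho\circ g$ for some cellular isomorphism $g\colon X\to X$ carrying $S$ to itself, and $g$ is determined by $\phi$ up to an ambiguity lying in the group of cellular automorphisms of $X$ inducing inner automorphisms of $\pi_1(X)=A_\Gamma$; since $X$ is finite this group is finite. Hence each simplex stabilizer maps with finite kernel to the finite cellular automorphism group of $X$, so is finite. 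As each point of $K_\Gamma$ lies in only finitely many simplices, a standard argument then yields proper discontinuity.

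For \emph{contractibility}, I would pass to the associated outer space $\mathcal{O}_\Gamma$, whose points are marked metric blow-ups $(X,\rho,\ell)$ with $\ell$ assigning positive lengths to the hyperplanes of $X$, normalized to sum to $1$; a face of a top-dimensional simplex is obtained by shrinking some lengths to $0$, i.e.\ collapsing those hyperplanes. Then $K_\Gamma$ sits in $\mathcal{O}_\Gamma$ as the equal-length spine, and $\mathcal{O}_\Gamma$ deformation retracts $\Out_\ell(A_\Gamma)$-equivariantly onto $K_\Gamma$ by radial retraction within each simplex, so it suffices to show $\mathcal{O}_\Gamma$ is contractible. For this I would fix a basepoint, namely $\Sa_\Gamma$ with the identity marking, and for an arbitrary marked metric blow-up $Z$ construct a canonical path in $\mathcal{O}_\Gamma$ from $Z$ to a rescaling of the basepoint, obtained by successively collapsing the hyperplanes of $X$ as guided by the marking $\rho$ and then refolding; assembling these paths gives the contraction once one checks that they depend continuously on $Z$. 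Equivalently one can run a Morse-theoretic argument on $\mathcal{O}_\Gamma$: fix a finite set of generating loops, let the norm be the total length of their images under the marking, show the minimum set of this norm is contractible, and show that the descending links at the non-minimal strata, indexed by collapsible ``ideal edges'' of a blow-up, are contractible, so that $\mathcal{O}_\Gamma$ flows down onto the minimum set.

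The main obstacle is entirely the contractibility of $\mathcal{O}_\Gamma$, and within that the combinatorial bookkeeping showing that the folding paths, equivalently the descending links, are well-behaved; this is where the structure theory of blow-ups of right-angled Artin Salvetti complexes enters, and it is precisely the content of \cite{CSV12}. Everything else — the retraction $\mathcal{O}_\Gamma\to K_\Gamma$ and proper discontinuity — is formal once that machinery is in place.
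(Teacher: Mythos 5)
The paper gives no proof of this statement: Theorem~\ref{Contractible} is a direct citation of Charney--Stambaugh--Vogtmann (their Proposition~4.17 for the action and Theorem~5.24 for contractibility), and the surrounding text explicitly defers all details to \cite{CSV12}. So there is no ``paper's own proof'' to compare against; what you have written is a reconstruction of the cited argument, and I will assess it on that basis.

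Your sketch of proper discontinuity is essentially right in spirit: the key facts are that there are finitely many isomorphism types of blow-ups (hence finitely many orbits of simplices, which also gives cocompactness), and that the stabilizer of a marked blow-up $(X,\rho)$ maps with finite kernel into the finite group of cellular automorphisms of the compact complex $X$. Your discussion of the ambiguity ``up to inner automorphisms'' is the right thing to track, though one should note that for a raag with nontrivial center the inner automorphism group is $A_\Gamma/Z(A_\Gamma)$, and the point is that only finitely many cellular automorphisms of $X$ induce inner outer classes commuting with the marking, not that there are finitely many inner automorphisms.

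For contractibility your primary route is anachronistic. You propose to build a metric outer space $\mathcal{O}_\Gamma$ of marked metric blow-ups, deformation-retract it onto the equal-length spine $K_\Gamma$, and then contract $\mathcal{O}_\Gamma$ by folding paths. That is not how \cite{CSV12} proceeds: they work directly with the simplicial complex $K_\Gamma$ (the geometric realization of the poset of marked blow-ups under hyperplane collapse) and run a Morse-theoretic/peak-reduction argument on it, defining a norm by summing word lengths of a fixed finite set of conjugacy classes under the marking, showing the Salvetti with identity marking minimizes the norm, and proving the descending links (controlled by ideal edges / $\Gamma$-Whitehead partitions) are contractible. A genuine metric outer space for raags, with a retraction onto this spine, was constructed only in later work. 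So the Morse-theoretic ``alternative'' you mention in passing is in fact the actual argument of \cite{CSV12}; your $\mathcal{O}_\Gamma$ route, while plausible and in the Culler--Vogtmann spirit, is not what the cited theorem numbers refer to, and would itself require substantial new work (the existence of well-defined folding paths between marked metric blow-ups is nontrivial and is not established in \cite{CSV12}).
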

See \cite{CSV12} for details on the construction of $K_\Gamma$.  For us, what will be important is that it is contractible, finite dimensional, and admits a properly discontinuous action of $\Out_{\ell}(A_\Gamma)$.


\subsection{The Torelli Subgroup for a Raag}
Day has shown in \cite{Day09} that $\IA(A_\Gamma)$ is generated by automorphisms of the following two forms:
\begin{enumerate}
\item (\emph{Partial Conjugation}) Let $v\in V^\pm$ be a generator, and $C\neq\emptyset$ a component of $\Gamma\setminus st(v)$.  Then $\sigma_{v,C}:A_\Gamma\rightarrow A_\Gamma$ denotes the automorphism 
\begin{align*}
	\sigma_{v,C}: &w\mapsto vwv^{-1}, \mbox{ $w\in C^\pm$;}\\
			     &w\mapsto w, \mbox{ else.}
\end{align*}
\item (\emph{Commutator Transvection}) Let $v, w_1, w_2\in V^\pm$ such that $w_1, w_2$ both dominate $v$, \emph{i.e.} $lk(v)\subset lk(w_1)$, $lk(w_2)$.  Then there are non-adjacent transvections of $v$ by $w_1$ and $w_2$, and we can therefore transvect $v$ by the commutator $[w_1,w_2]$.  

$\tau_{w_1,w_2,v}:A_\Gamma\rightarrow A_\Gamma$ denotes the automorphism
\begin{align*}
	\tau_{w_1,w_2,v}: &w\mapsto [w_1,w_2]w, \mbox{ $w=v$;}\\
			     &w\mapsto w, \mbox{ else.}
\end{align*}

\end{enumerate}

We remark that in case (1), if $\Gamma\setminus st(v)$ is connected, then $\sigma_{v,C}$ is just conjugation by $v$.  From this generating set, it is clear that $\IA(A_\Gamma)\leq \Aut_{\ell}(A_\Gamma)$, since partial conjugations lie in $\Aut_{\ell}(A_\Gamma)$ by definition, and in order for the transvection in case (2) to be non-trivial, we must have that $v$, $w_1$ and $w_2$ are pairwise non-adjacent.  Passing to the outer automorphism group, we obtain $\mathcal{I}(A_\Gamma) \leq \Out_{\ell}(A_\Gamma)$.

Since $\mathcal{I}(A_\Gamma) \leq \Out_{\ell}(A_\Gamma)$, it follows that $\mathcal{I}(A_\Gamma)$ acts on $K_\Gamma$ by simplicial automorphisms.  Suppose $\phi\in \mathcal{I}(A_\Gamma)$ has finite order.  Without loss we may assume the order is prime.  $K_\Gamma$ is finite-dimensional and by Theorem \ref{Contractible} it is contractible, hence the action of $\phi$ on $K_\Gamma$ has a fixed point.  The fixed point $(X,\rho) \in K_\Gamma$ corresponds to a marked blow-up of a Salvetti complex.  By the definition of $K_\Gamma$, this means that $\phi$ is realized as an automorphism $f:X\rightarrow X$ which commutes with the marking $\rho$ up to homotopy.  Further, as $\phi\in \mathcal{I}(A_\Gamma)$, we know that the induced map $f_*:H_1(X)\rightarrow H_1(X)$ is the identity map.  This will be the starting point for our investigation.  We want to show that $f$ itself must be the identity.  We record the preceding discussion in 

\begin{proposition}\label{FixedSal}  Let $\phi\in \mathcal{I}(A_\Gamma)$ have prime order.  Then $\phi$ is realized as an automorphism $f:X\rightarrow X$ of some marked blow-up $X$ of a Salvetti complex.  
\end{proposition}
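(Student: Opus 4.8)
The plan is to exploit the action of $\mathcal{I}(A_\Gamma)$ on the complex $K_\Gamma$ together with a fixed-point theorem for finite $p$-groups. Recall that we have already established $\mathcal{I}(A_\Gamma)\leq\Out_\ell(A_\Gamma)$ from Day's generating set \cite{Day09}, so $\mathcal{I}(A_\Gamma)$ acts on $K_\Gamma$ by simplicial automorphisms. Since $\phi$ has prime order $p$, the cyclic group $\langle\phi\rangle$ is a $p$-group; and $K_\Gamma$ is contractible — hence $\mathbb{F}_p$-acyclic — and finite-dimensional by Theorem \ref{Contractible} and the remarks following it. After replacing $K_\Gamma$ by its barycentric subdivision, the $\langle\phi\rangle$-action is simplicial and without inversions, so Smith theory guarantees that the fixed-point set $K_\Gamma^{\langle\phi\rangle}$ is a non-empty subcomplex; in particular $\phi$ fixes a vertex $(X,\rho)$ of $K_\Gamma$, which by construction is a marked blow-up of the Salvetti complex $\Sa=\Sa_\Gamma$.

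It then remains to convert the statement ``$\phi\cdot(X,\rho)=(X,\rho)$ in $K_\Gamma$'' into a genuine cube-complex automorphism realizing $\phi$. For this I would unwind the definition of the $\Out_\ell(A_\Gamma)$-action on $K_\Gamma$ from \cite{CSV12}: representing $\phi$ by a homotopy equivalence $h\colon\Sa\to\Sa$, one has $\phi\cdot(X,\rho)=(X,h\circ\rho)$, and the equivalence $(X,h\circ\rho)\sim(X,\rho)$ of marked blow-ups means precisely that there is an isomorphism of cube complexes $f\colon X\to X$ with $\rho\circ f\simeq h\circ\rho$. Passing to fundamental groups and transporting along the marking isomorphism $\rho_*\colon\pi_1(X)\xrightarrow{\sim}\pi_1(\Sa)=A_\Gamma$, we get $\rho_*\circ f_*=h_*\circ\rho_*$ up to an inner automorphism, so the outer class of $f_*$ equals $\phi$. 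Thus $\phi$ is realized by $f\colon X\to X$; moreover, since $\phi\in\mathcal{I}(A_\Gamma)$, the induced map $f_*$ is the identity on $H_1(X)\cong H_1(A_\Gamma)$, which is the setup needed for verifying the criterion of Proposition \ref{Criterion} afterwards.

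The only step requiring genuine care is the fixed-point argument: an arbitrary finite group can act without fixed points on a finite-dimensional contractible complex, so it is essential that we have already reduced to prime order, and one must pass to a subdivision to rule out inversions before invoking Smith theory. Everything else is bookkeeping — correctly recalling from \cite{CSV12} that $K_\Gamma$ is contractible, finite-dimensional, and carries a simplicial $\Out_\ell(A_\Gamma)$-action, and unwinding how that action changes markings. I do not expect any essentially new difficulty in this proposition; its role is to supply the realization in Step (1) of the proof of Theorem \ref{torsionfree}.
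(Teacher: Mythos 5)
Your proof is correct and follows the same route as the paper: realize $\phi$ as an automorphism of a marked blow-up by finding a fixed point of its action on the Charney--Stambaugh--Vogtmann spine $K_\Gamma$, then unwind what it means for a marked blow-up to be fixed. You make explicit the Smith-theory justification that the paper leaves implicit (``contractible and finite-dimensional, hence the action has a fixed point''), which is the right call since without the prime-order reduction a finite group need not have a fixed point on a finite-dimensional contractible complex; the only remaining detail both accounts pass over is the step from a fixed point of the barycentric subdivision to a fixed \emph{vertex} of $K_\Gamma$, which holds because simplices of $K_\Gamma$ are chains in the collapse poset, so a fixed simplex is fixed vertexwise by any order-preserving automorphism.
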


\subsection{Blow-ups of Salvetti complexes}
At this point, we have realized torsion elements of $\mathcal{I}(A_\Gamma)$ as automorphisms of cube complexes which act trivially on first homology. In order to show that $\mathcal{I}(A_\Gamma)$ is torsion-free, it suffices to show that any automorphism of a blow-up $X$ acts non-trivially on $H_1(X)$. To do this, we will show that every blow-up satisfies the hypotheses of Proposition \ref{Criterion}. 

Generalizing Whitehead partitions for free groups, Charney, Stambaugh, and Vogtmann define automorphisms of $A_\Gamma$ which they call $\Gamma$-Whitehead partitions.  The reason for using a generating set consisting of $\Gamma$-Whitehead automorphisms instead of the standard generating set is that each $\Gamma$-Whitehead automorphism can be achieved by an expansion and collapse of a Salvetti complex for $A_\Gamma$.
\begin{definition}(\cite{CSV12}, Definition 2.1) Let $P\subset V^{\pm}$ have at least 2 elements, including some $m\in P$ with $m^{-1}\notin P$.  Then $(P,m)$ is a \textbf{$\Gamma$-Whitehead pair} if\begin{enumerate}
\item no element of $P$ is adjacent to $m$,
\item if $v\in P$ and $v^{-1}\notin P$ then  $v\leq m$,
\item if $v^{\pm}\in P$, then $w^{\pm}\in P$ for every $w$ in the same component of $\Gamma\setminus st(m)$ as $v$.
\end{enumerate}
\end{definition}

A $\Gamma$-Whitehead pair $(P,m)$ defines an automorphism $\phi=\phi_{(P,m)}$ defined by
\[\phi(v)=\left\{\begin{array}{ll}m^{-1}&\mbox{ if $v=m$}\\
vm^{-1} & \mbox{ if $v\in P$ and $v^{-1}\notin P$}\\
mv & \mbox{ if $v^{-1}\in P$ and $v\notin P$}\\
mvm^{-1} & \mbox{ if $v^{\pm}\in P$}\\
v & \mbox{ else}
\end{array}\right.\]

The pair $(P,m)$ also defines several important subsets of $V^{\pm}$\[double(P)=\{v\in P|v^{\pm}\in P\}\]\[single(P)=\{v\in P|v^{-1}\notin P\}\]\[max(P)=\{v\in single(P)|v\sim m\}\]\[lk(P)=lk(m)^{\pm}\]
The automorphism $\phi$ is clearly a product of an inversion of $m$, a transvection of elements of elements of $single(P)$ and a partial conjugation of elements of $double(P)$, hence $\phi\in Out_{\ell}(A_\Gamma)$.  Conversely, it is easy to see that $Out_{\ell}(A_\Gamma)$ is generated by all $\Gamma$-Whitehead automorphisms together with inversions and graph automorphisms.  Note that each $\phi_{(P,m)}$ has order 2.   

Condition (1) in the definition implies that $P\cap lk(P)=\emptyset$.  The other side of $P$, denoted $P^*$, is the complement of $P\cup lk(P)$ in $V^{\pm}$. $(P^*,m^{-1})$ is also a $\Gamma$-Whitehead pair which defines the same \emph{outer} automorphism of $A_\Gamma$.  We therefore obtain a disjoint union \[V^{\pm}=P\cup lk(P)\cup P^*.\]
\begin{definition} (\cite{CSV12}, Definition 2.4) The triple $\textbf{P}=\{P,lk(P), P^*\} $ is called a \textbf{$\Gamma$-Whitehead partition} of $V^{\pm}$.  $P$ and $P^*$ are the \textbf{sides} of \textbf{P}.
\end{definition}

\begin{definition} (\cite{CSV12}, Definition 3.3) Let $\textbf{P}=\{P,lk(P), P^*\}$ and $\textbf{Q}=\{Q,lk(Q), Q^*\}$ be two $\Gamma$-Whitehead partitions.  \begin{enumerate}
\item \textbf{P}, \textbf{Q} \textbf{commute} if $max(P)$, $max(Q)$ are distinct and commute.
\item \textbf{P}, \textbf{Q} are \textbf{compatible} if either they commute or at least one of $P\cap Q$,  $P^*\cap Q$, $P\cap Q^*$ or $P^*\cap Q^*$ is empty.
\end{enumerate}
\end{definition}

It is shown in \cite{CSV12} that if $\textbf{P}, \textbf{Q}$ are compatible and do not commute, exactly one of the intersections is empty.  A collection $\mathbf{\Pi}=\{\textbf{P}_1,\ldots,\textbf{P}_k\}$ is called \emph{compatible} if the $\textbf{P}_i$ are pairwise compatible.  A \emph{region} of $\mathbf{\Pi}$ is choice of side $P_i^{\times}\in \{P_i,P_i^*\}$ for each $i$, such that for any $i,j$, either $\textbf{P}_i$ and $\textbf{P}_j$ commute, or $P_i^{\times}\cap P_j^{\times}\neq \emptyset$. 

We are now in a position to build the blow-up $\Sa_{\mathbf{\Pi}}$ associated to $\mathbf{\Pi}$.  First we will construct a contractible complex $\E_\Pi$ containing all the vertices of $\Sa_{\mathbf{\Pi}}$. To each region $R=P_1^{\times}\cap\cdots\cap P_k^{\times}$ we associate a vertex $x_R=(a_1,\ldots,a_k)$ of the $k$-cube $[0,1]^k$ via \[a_i=\left\{\begin{array}{ll} 0&\mbox{ if $P_i^{\times}=P_i$}\\
1 & \mbox{ if $P_i^{\times}=P_i^*$}\end{array}\right.\]

Now we attach edges to $\E_\Pi^{(0)}$.  If $R$ and $R'$ are two regions which differ exactly by switching sides along a single partition $P_i$, we attach an edge $e_{P_i}$ from $x_R$ to $x_{R'}$.  The edge $e_{P_i}$ is oriented from the region containing $P_i$ to the region containing $P_i^*$.  The rest of $\E_\Pi$ is formed by filling in cubes where their boundaries occur.  

We complete the construction of $\Sa_{\mathbf{\Pi}}$ by attaching cubes to $\E_\Pi$, starting with the 1-cubes.  Set $\overl{P}_i^{\times}=P_i^{\times}\cup lk(P_i)$.  For each region $R$, define a subset $V^{\pm}$ \[I(R)=\overl{P}_1^{\times}\cap\cdots\cap\overl{P}_k^{\times}.\] Compatibility implies each $I(R)$ is non-empty, and Lemma 3.10 $(1)$ of \cite{CSV12} states that ever $v\in V^{\pm}$ occurs in some $I(R)$. If $v^{\pm}\in I(R)$, attach both vertices of an edge $e_v$ at $x_R$.  Suppose $v\in I(R)$ and $v^{-1}\notin I(R)$, and $v$ is a single in $P_{i_1}^{\times},\ldots,P_{i_r}^{\times}$. By Lemma 3.10 $(2)$ of \cite{CSV12}, there is a region $R_v$ obtained from $R$ by switching sides along the $P_{i_j}^{\times}$, and $v^{-1}\in I(R_v)$.  In this case we therefore attach an edge $e_v$ from $x_R$ to $x_{R_v}$. Note that $e_{v^{-1}}=\overl{e}_{v} $.  

Every edge of $(\Sa_{\mathbf{\Pi}})^{(1)}$ carries a \emph{label} which is either some generator $v\in V^{\pm}$ or some partition $\textbf{P}_i$. Two edges $e_{l_1}$, $e_{l_2}$ have \emph{commuting labels} if one of the following holds\begin{enumerate}
\item $l_1=v\in V^{\pm}$, $l_2=w\in V^{\pm}$ and $v,w$ are distinct an commute in $A_\Gamma$,
\item $l_1=v\in V^{\pm}$, $l_2=P_i$ and $v\in lk(P_i)$,
\item $l_1=P_i$, $l_2=P_j$ and $\textbf{P}_i,\textbf{P}_j$ are distinct and commute.  
\end{enumerate}

With commuting labels defined as above, any collection of $k$ edges with commuting labels at a vertex $x_R$ forms the corner of the 1-skeleton of a $k$-cube in  $(\Sa_{\mathbf{\Pi}})^{(1)}$, with parallel edges carrying the same label (\cite{CSV12}, Corollary 3.12). To finish the construction of  $\Sa_{\mathbf{\Pi}}$, we fill in all such $k$-cubes where they occur. $\Sa_{\mathbf{\Pi}}$ is called the \emph{blow-up} of $\Sa_\Gamma$ along $\mathbf{\Pi}$.  We have

\begin{thm}\label{CATBlowup} $($\cite{CSV12}, Theorem 3.14$)$ $\Sa_{\mathbf{\Pi}}$ is connected, locally CAT(0) and $\pi_1(\Sa_{\mathbf{\Pi}})\cong A_\Gamma$.
\end{thm}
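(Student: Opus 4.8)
My plan is to establish the three assertions of the theorem---connectedness, the non-positive curvature condition, and the computation of $\pi_1$---more or less independently, using the fact that the contractible subcomplex $\E_\Pi$ contains every vertex of $\Sa_{\mathbf{\Pi}}$. \emph{Connectedness} is then immediate: $\E_\Pi\subseteq\Sa_{\mathbf{\Pi}}$ is a subcomplex meeting every vertex and is contractible, hence connected, so $\Sa_{\mathbf{\Pi}}$ is connected. (By hand, one would note that the $1$-skeleton of $\E_\Pi$ is the graph on regions of $\mathbf{\Pi}$ with an edge $e_{\textbf{P}_i}$ whenever two regions differ by flipping the single side $\textbf{P}_i$, and invoke compatibility---Lemma~3.10 of \cite{CSV12}---to see that from any region one can reach any other by such flips without ever leaving the set of valid regions.)

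For the \emph{fundamental group}, since $(\Sa_{\mathbf{\Pi}},\E_\Pi)$ is a CW pair with $\E_\Pi$ contractible, the quotient map $q\colon\Sa_{\mathbf{\Pi}}\to Y:=\Sa_{\mathbf{\Pi}}/\E_\Pi$ is a homotopy equivalence, so it suffices to read $\pi_1(Y)$ off the $2$-skeleton. Collapsing $\E_\Pi$ identifies all region-vertices to one point, kills the partition edges $e_{\textbf{P}_i}$, and turns each generator edge $e_v$ into a loop (with $e_{v^{-1}}=\overl{e}_v$), so $Y^{(1)}=\bigvee_{v\in V}S^1$ just as for $\Sa_\Gamma$. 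The $2$-cells of $\Sa_{\mathbf{\Pi}}$ are the squares with label pairs $(v,w)$ for distinct commuting generators, $(v,\textbf{P}_i)$ with $v\in lk(P_i)$, and $(\textbf{P}_i,\textbf{P}_j)$ for distinct commuting partitions; in $Y$ the last kind disappears, the middle kind is attached along the null-homotopic loop $\overl{e}_v\,\overl{e}_v^{-1}$ and so imposes no relation, and the first kind becomes exactly the commutator relations $[e_v,e_w]$ of the Salvetti complex. Thus $\pi_1(Y)$, and hence $\pi_1(\Sa_{\mathbf{\Pi}})$, has the standard presentation of $A_\Gamma$. (Alternatively, I could exhibit $\Sa_{\mathbf{\Pi}}$ as the outcome of a finite sequence of elementary vertex-expansions of $\Sa_\Gamma$, each a visible homotopy equivalence, following the inductive scheme of \cite{CSV12}.)

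The substantive point is \emph{local CAT(0)-ness}. By the Gromov link condition recalled in Section~2 it suffices to prove that $\lk(x_R)$ is a flag simplicial complex for each region $R$. Its vertices are the edge-germs at $x_R$---one per incident edge $e_{\textbf{P}_i}$ and one per end of each generator edge at $x_R$---and its simplices are the corners of cubes of $\Sa_{\mathbf{\Pi}}$ at $x_R$. The engine is Corollary~3.12 of \cite{CSV12}: a set of edges at $x_R$ spans the corner of a cube precisely when its labels pairwise commute in the sense of the three listed cases. Since pairwise commutativity of labels is a condition on pairs, the corners of cubes at $x_R$ are exactly the cliques of the graph on edge-germs with adjacency ``commuting labels'', and a clique complex is flag; so $\lk(x_R)$ will be flag as soon as it is verified to be genuinely simplicial. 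That verification---ruling out distinct germs spanning a bigon, a germ adjacent to itself, or two distinct cubes with the same cornered edge-set---is the combinatorial heart of the proof: one must play the ``distinctness'' clauses in the three cases of commuting labels, the equality of labels on opposite edges of a cube, and the structure of $\Gamma$-Whitehead partitions ($P_i\cap lk(P_i)=\emptyset$; two compatible non-commuting partitions have exactly one empty intersection of sides; each $I(R)$ is nonempty and every $v\in V^{\pm}$ lies in some $I(R)$) against every possible degeneracy, case by case. I expect this step---confirming that the local model at each $x_R$ is that of an honest NPC cube complex rather than merely a cube-like cell complex---to absorb essentially all of the work; the rest is formal.
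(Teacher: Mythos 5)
This theorem is not proved in the paper---it is cited verbatim from \cite{CSV12}, Theorem 3.14---so there is no ``paper's own proof'' to compare against, and your attempt has to stand on its own. Your treatment of connectedness is fine, and you are right that the combinatorial heart of the argument is verifying that each $\lk(x_R)$ is an honest flag simplicial complex; you identify the correct tool (Corollary 3.12 of \cite{CSV12}) and honestly defer that verification, which is indeed where the bulk of the work lies.

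The $\pi_1$ step, however, contains a genuine error: the assertion that collapsing $\E_\Pi$ yields $Y^{(1)}=\bigvee_{v\in V}S^1$ is false in general. A single generator $v$ typically labels \emph{several} edges of $\Sa_{\mathbf{\Pi}}$: there is one loop $e_v$ at $x_R$ for every region $R$ with $v^{\pm}\in I(R)$ (this can happen for many $R$ once $v\in lk(\textbf{P}_i)$ for some $i$), and one non-loop edge for each pair $(R,R_v)$ with $v\in I(R)$, $v^{-1}\notin I(R)$. For instance, with $\Gamma$ the path $a-b-c$ and $\Pi=\{\textbf{P}\}$, $\textbf{P}=(\{a,c\},a)$, there are two distinct loops labelled $b$, one at each region vertex. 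Consequently the $(v,\textbf{P}_i)$ squares are \emph{not} attached along constant loops in $Y$: in $Y$ such a square's boundary is $e_v(e_v')^{-1}$ for two a priori distinct copies $e_v\neq e_v'$, and far from ``imposing no relation'' these 2-cells are exactly what identifies the parallel copies of $e_v$ up to homotopy. Similarly the $(v,w)$ squares read $e_v e_w (e_v')^{-1}(e_w')^{-1}$ rather than a literal commutator. The argument can be repaired---one must first use the $(v,\textbf{P}_i)$ and $(\textbf{P}_i,\textbf{P}_j)$ squares (equivalently, the hyperplane structure: all edges labelled $v$ are dual to a single hyperplane $K_v$) to collapse the multiplicity down to one loop per generator, and only then read off the commutator relations---but as written the step from ``collapse $\E_\Pi$'' to ``presentation complex of $A_\Gamma$'' does not go through.
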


\begin{definition}After crossing an edge $e_v$ labelled by a generator $v\in V$, there is a path in $E_\Pi^{(1)}$ connecting the two endpoints of $e_v$.  This path crosses edges labelled by every partition containing $v$ as a singleton.  We call such a path a \emph{characteristic loop} $\gamma_v$.  
\end{definition}

\subsection{Automorphisms of blow-ups}

\begin{proposition} Every blow-up $X$ satifies the hypotheses of Proposition \ref{Criterion}. 
\end{proposition}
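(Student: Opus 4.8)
The plan is to read conditions (1)--(3) of Proposition \ref{Criterion} directly off the combinatorics of the blow-up $X=\Sa_{\mathbf{\Pi}}$. First a bookkeeping step: the hyperplanes of $X$ are exactly the \emph{generator hyperplanes} $H_v$ dual to the edges labelled $v\in V$ and the \emph{partition hyperplanes} $H_{\mathbf{P}}$ dual to the edges labelled $\mathbf{P}\in\mathbf{\Pi}$, and two hyperplanes cross if and only if their labels commute in the sense recalled before Theorem \ref{CATBlowup}. Using the characteristic loops $\gamma_v$ one identifies the intersection pairing on $H_1(X)\cong H_1(A_\Gamma)\cong\Z^{V}$: the classes $\phi_{H_v}$, $v\in V$, form the basis of $H^1(X;\Z)$ carried by the marking $\rho$ to the standard generators of $H^1(\Sa_\Gamma)$, and
\[ \phi_{H_{\mathbf{P}}}=-\sum_{v}\varepsilon_v\,\phi_{H_v}\qquad(\varepsilon_v=\pm 1), \]
the sum running over one representative $v$ of each pair $\{w,w^{-1}\}$ of singletons of $P$. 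Since $m$ is always a singleton of $P$, $\phi_{H_{\mathbf{P}}}\ne 0$, so \emph{every hyperplane of a blow-up is non-separating}; this is the fact that makes conditions (2) and (3) manageable.

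For (1), run through the three types of crossing pairs. If $H_v$ and $H_w$ cross then $v\ne w$ and $\phi_{H_v}\ne\phi_{H_w}$ are distinct basis vectors. If $H_v$ crosses $H_{\mathbf{P}}$ then $v\in lk(P)$, so $v$ is not a singleton of $P$ (singletons lie in $P$, which is disjoint from $lk(P)$), and hence $\phi_{H_{\mathbf{P}}}$ has vanishing $\phi_{H_v}$-coordinate while $\phi_{H_v}$ does not. If $H_{\mathbf{P}}$ crosses $H_{\mathbf{Q}}$ then $\mathbf{P},\mathbf{Q}$ commute, so $m_{\mathbf{P}}$ is adjacent to $m_{\mathbf{Q}}$, whence $m_{\mathbf{P}}\in lk(Q)$ is not a singleton of $Q$, and again the $\phi_{H_{m_{\mathbf{P}}}}$-coordinates of $\phi_{H_{\mathbf{P}}}$ and $\phi_{H_{\mathbf{Q}}}$ differ ($\pm 1$ versus $0$). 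In each case the characteristic loop of the relevant singleton gives an $\alpha\in H_1(X)$ with $\alpha.K_1\ne\alpha.K_2$. For (3), suppose $f$ fixes every hyperplane of $X$ setwise. Since $\phi_H\ne 0$ for all $H$, $f$ cannot exchange the two sides of any $H$ without sending $\phi_H$ to $-\phi_H$; as $f$ acts trivially on $H_1(X)$ it therefore preserves the transverse orientation of every hyperplane. A vertex of $X$ is a region, hence is determined by which side of each (non-separating) partition hyperplane it lies on, so $f$ fixes every vertex, then every oriented edge, and since $X$ is NPC there is at most one cube with a given boundary, so $f$ fixes every cube and $f=\mathrm{id}$.

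Condition (2) is the substantive step and the one I expect to be delicate. Let $K_1,K_2$ be disjoint with $K_1\cup K_2$ separating. Since $K_1$ is non-separating and $K_2$ disconnects the connected complex $X\setminus K_1$, there are exactly two complementary components $A,B$, and the existence of a $2$-colouring of $X\setminus(K_1\cup K_2)$ forces $\phi_{K_1}\equiv\phi_{K_2}$ in $H^1(X;\Z/2)$. Comparing with the formula for $\phi_{H_{\mathbf{P}}}$ eliminates the case $\{K_1,K_2\}=\{H_v,H_w\}$ and shows that in the remaining cases the singleton sets are tightly constrained: either $K_2=H_{\mathbf{P}}$ with $K_1=H_{m_{\mathbf{P}}}$ and $m_{\mathbf{P}}$ (up to inverse) the \emph{only} singleton of $P$, or $\{K_1,K_2\}=\{H_{\mathbf{P}},H_{\mathbf{Q}}\}$ with $\mathbf{P},\mathbf{Q}$ non-commuting but with the same set of singleton hyperplanes --- a configuration one checks to be incompatible in the sense of \cite{CSV12}, hence not occurring inside a single compatible collection. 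In the first case the two components are the $P$-side and the $P^{*}$-side of $\mathbf{P}$; since $(P,m)$ and $(P^{*},m^{-1})$ are $\Gamma$-Whitehead pairs, and each has a unique singleton while $|P|,|P^{*}|\ge 2$, each side contains a doubled generator $w$. Condition (1) of a Whitehead pair then forces $w$ non-adjacent to $m$, so $H_w$ misses $K_1=H_{m}$, and $w\in P$ (resp. $P^{*}$) forces $H_w$ to miss $K_2=H_{\mathbf{P}}$; the $w$-loop lies in the corresponding component, supplying the required non-separating hyperplane on each side. The main obstacle throughout is the geometric bookkeeping --- matching the two abstract components $A,B$ with the combinatorial ``sides'' determined by the relevant partition and checking that the generator loops land where expected --- after which (2) reduces to the elementary arithmetic of $\Gamma$-Whitehead pairs recalled above.
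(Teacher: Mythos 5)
Your overall strategy for conditions (1) and (3) is sound and genuinely different in flavor from the paper's: rather than tracing characteristic loops against individual hyperplanes, you work directly with the formula $\phi_{H_{\mathbf{P}}}=\pm\sum_{v\in single(P)/\pm}\phi_{H_v}$ in the dual basis $\{\phi_{H_v}\}$, which cleanly establishes (1) and the fact that every hyperplane of a blow-up is non-separating. (For (3), note the statement of condition (3) in Proposition \ref{Criterion} is a hypothesis about $X$ alone and does not grant you that $f$ acts trivially on $H_1$; the paper instead observes that maximal collections of pairwise crossing hyperplanes meet in a unique cube. Your version suffices for the application, since (3) is only invoked after $f_*=\mathrm{id}$ has been established, but it is technically weaker than what is asserted.)

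However, there is a genuine gap in the partition--partition case of condition (2). You assert that two compatible, non-commuting $\Gamma$-Whitehead partitions $\mathbf{P},\mathbf{Q}$ with $single(P)=single(Q)$ cannot coexist in a compatible collection. This is false, and it is exactly the delicate case. For example, take $\Gamma$ to be the discrete graph on $\{a,b,c,d\}$, $P=\{a,b,b^{-1}\}$, $Q=\{a,b,b^{-1},c,c^{-1}\}$, both with $m=a$. Then $P\cap Q^*=\emptyset$ (so $\mathbf{P},\mathbf{Q}$ are compatible but do not commute, $max(P)=max(Q)=\{a\}$) and $single(P)=single(Q)=\{a\}$. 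In the blow-up $\Sa_{\{\mathbf{P},\mathbf{Q}\}}$, $E_\Pi$ is a two-edge path with regions $P\cap Q$, $P^*\cap Q$, $P^*\cap Q^*$; the edge $e_a$ joins the two outer vertices, while $e_b,e_c,e_d$ are loops, and removing $K_P\cup K_Q$ leaves exactly two components. So this configuration both occurs and gives a separating pair, and your argument provides no non-separating hyperplane in the $P^*\cap Q$ component. The paper resolves this by the three-component analysis of $E_\Pi\setminus(K_P\cup K_Q)$: after reducing to $single(P)=single(Q)$ and $P\subsetneq Q$, it produces a hyperplane labeled by some $w^\pm\in double(Q)\setminus double(P)$ (which is nonempty since $P\neq Q$) living in the middle component $E_2$, and an element of $double(P)$ (or of $single(P)$ if $|single(P)|\geq 2$) in the component containing $E_1\cup E_3$. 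Your cohomological reduction to $single(P)=single(Q)$ is a nice alternative to the paper's combinatorial one, but you still need to carry out this final construction rather than declare the case vacuous.
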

\begin{proof} Without loss of generality, $X=\Sa_\Pi$ is a blow-up of the standard Salvetti, hence comes equipped with some labeling of the 1-skeleton by generators $v_1,\ldots,v_n\in V$ or partitions $\textbf{P}_1,\ldots, \textbf{P}_k\in \Pi$.   The hyperplanes of $X$ are in one-to-one correspondence with these labels, so we check them one by one.  To this end, let $l_1$ and $l_2$ be labels with corresponding hyperplanes $K_{l_1}$, $K_{l_2}$.

$K_{l_1}\cap K_{l_2}\neq \emptyset$: Observe that $K_{l_1}$, $K_{l_2}$ intersect if and only if their corresponding labels commute. Consider a square bounded on by edges $e_1$ and $e_2$ dual to $K_{l_1}$ and $K_{l_2}$, respectively.  Each edge $e_i$ can be completed to a characteristic loop $\gamma_i$ as follows. If $l_i=v$ is a generator, then take a characteristic loop $\gamma_v$. If $l_i=P$ is a partition, choose some $m\in max(P)$, and complete this to a characteristic loop $\gamma_m$.  Next observe that since $lk(l_1)\subset st(l)$ for every label $l$ occurring on $\gamma_1$, $l_2$ commutes with every such $l$, and similarly for $\gamma_2$ and $l_1$.  It follows that $\gamma_1\subseteq K_{l_2}$ and $\gamma_2\subseteq K_{l_1}$. Thus, $\gamma_i.K_{l_j}=\delta_{ij}$ and this case is satisfied.  

$K_{l_1}\cap K_{l_2}= \emptyset$ and $K_{l_1}\cup K_{l_2}$ separates: Since $E_{\Pi}$ contains all of the vertices of $\Sa_\Gamma$, it is easy to see that if the $l_i$ both correspond to generators, then $K_{l_1}\cup K_{l_2}$ cannot separate.  Thus the only possibilities for pairs of separating hyperplanes are one generator, one partition or two partitions.  

First suppose $l_1=v$ and $l_2=P$.  We know that $K_P$ disconnects $E_\Pi$ into two components, corresponding to vertices which contain $P$ and those which contain $P^*$. If $K_v\cup K_P$ separate, then we must have $v\in single(P)$, and in fact $\{v\}=single(P)=max(P)$. Then $(P,v)$ is one of the $\Gamma$-Whitehead partitions in $\Pi$.  By assumption, this partition is non-degenerate; hence there must be $w_1^\pm\in double(P)$ and $w_2^\pm\in double(P^*)$.  The hyperplanes corresponding to $w_1$ and $w_2$ do not separate there respective components. 

Now assume $l_1=P$ and $l_2=Q$.  Then $P$ and $Q$ are compatible and do not commute, hence without loss of generality we have $P\subset Q$ and $Q^*\subset P^*$ by Lemma 3.4 of \cite{CSV12}. Then $P^*\cap Q\neq \emptyset$.  In $E_\Pi$, deleting $K_P$ and $K_Q$ leaves three components $E_1$, $E_2$ and $E_3$ whose vertices correspond to regions containing $P\cap Q$, $P^*\cap Q$, and $P^*\cap Q^*$, respectively.  Elements of $single(P)\setminus single(Q)$ connect $E_1$ and $E_2$, elements of $single(Q)\setminus single(P)$ connect $E_2$ and $E_3$, while elements of $single(P)\cap single(Q)$ connect $E_1$ and $E_3$.  If $max(Q)\neq max(P)$ then $K_P\cup K_Q$ does not separate. Then if $max(P)=max(Q)$, the only way  $K_P\cup K_Q$ separates is if actually $single(P)=single(Q)$.  As $P^*\cap Q\neq \emptyset$ we must have that $double(Q)\neq double(P)$, or else $P=Q$, which is impossible.  Then the component containing $E_2$ has a non-separating hyperplane labeled by some $w^\pm\in double(Q)\setminus double(P)$.  If $single(P)$ is not a single element, then the hyperplane corresponding to any element of $single(P)$ does not disconnect the component containing $E_1\cup E_3$. Otherwise, $\{m\}=max(P)=single(P)=single(Q)$ is a single generator.  In this case, since $(P,m)$ is non-trivial, there exists $v^\pm\in double(P)$, and the hyperplane $K_v$ does not separate the component containing $E_1\cup E_3$. 

Finally, to see that condition (3) of Proposition \ref{Criterion} is satisfied, observe that for each maximal collection of pairwise commuting hyperplanes, there is a unique cube in which they all meet.  If $f:X\rightarrow X$ is an automorphism which preserves every hyperplane, $f$ must fix each of these cubes pointwise. Since the union of these cubes covers $X$, $f$ is the identity.  This completes the proof.  
\end{proof}

\begin{corollary} \label{nontrivial}Every automorphism of a blow-up a Salvetti acts nontrivially on $H_1$.
\end{corollary}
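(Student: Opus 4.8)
The plan is to read this off directly from Proposition~\ref{Criterion} together with the preceding proposition, which does all of the real work. Let $X = \Sa_\Pi$ be a blow-up of a Salvetti complex and let $f\colon X\to X$ be a non-identity automorphism. By Theorem~\ref{CATBlowup} and the preceding proposition, $X$ is compact special and satisfies hypotheses (1), (2) and (3) of Proposition~\ref{Criterion}: transverse hyperplanes $K_{l_1},K_{l_2}$ are distinguished in homology by the characteristic loops completing an edge dual to each (condition (1)); every component of the complement of a separating pair $K_{l_1}\cup K_{l_2}$ contains a non-separating hyperplane labelled by an element of $double(P)$ or $double(Q)$ and missing both $K_{l_i}$ (condition (2)); and an automorphism fixing every hyperplane fixes each maximal cube pointwise, hence is the identity, since these cubes cover $X$ (condition (3)).

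Now I would argue by contraposition. Suppose $f$ acted trivially on $H_1(X)$. Then Proposition~\ref{Criterion} forces $f = \id_X$, contradicting the choice of $f$. Hence every non-identity automorphism of $X$ acts non-trivially on $H_1(X)$, which is the content of the corollary.

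I do not anticipate a genuine obstacle, since the substance has been absorbed into the two preceding propositions; the only care required is bookkeeping, as Proposition~\ref{Criterion} is phrased as ``trivial on $H_1 \Rightarrow$ identity'' and the corollary is its contrapositive for non-identity $f$. For the downstream application (Theorem~\ref{torsionfree}) one then combines this with Proposition~\ref{FixedSal}: a prime-order $\phi\in\mathcal{I}(A_\Gamma)$ is realized as an automorphism $f$ of some blow-up with $f_* = \id$ on $H_1(X) = H_1(A_\Gamma)$, so $f = \id_X$ and hence $\phi$ is trivial in $\Out(A_\Gamma)$, a contradiction.
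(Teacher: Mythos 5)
Your argument is exactly the paper's intended one: the corollary is stated immediately after the proposition verifying that blow-ups satisfy the hypotheses of Proposition~\ref{Criterion}, and follows by contraposition with no further work. The bookkeeping point you flag (Criterion is ``trivial on $H_1 \Rightarrow$ identity'') is handled correctly, and your sketch of how it feeds into Theorem~\ref{torsionfree} matches the paper's concluding argument.
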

\noindent
We are now in a position to finish off the proof of Theorem \ref{torsionfree}:

\noindent
\emph{Proof:} Suppose for contradiction there exists $\phi\neq1\in \mathcal{I}(A_\Gamma)$ such that $\phi^n=1$. Passing to a power, we may assume $n$ is prime.  By Proposition \ref{FixedSal}, there exists a blow-up of a Salvetti $X$ and an automorphism $f:X\rightarrow X$ such that $f_*=\phi\in \Out(A_\Gamma)$. Corollary \ref{nontrivial} now implies that if $f$ acts trivially on $H_1(X)$, $f$ is the identity. Hence, $\phi=1$, a contradiction. \qed


\bibliographystyle{plain}
\bibliography{RaagBib}

\end{document}